\newcommand{\Lnorm}[2]{\left\| #2 \right\|_{{#1}}}
\DeclareMathOperator{\Prob}{\mathbb{P}}   
\newcommand{\1}{\mathds{1}}
\newcommand{\Id}{{\mathrm{Id}}}
\numberwithin{equation}{section}
\newcommand{\bla}{\bm{\lambda}}
\newcommand{\boeta}{\bm{\eta}}
\newcommand{\boxi}{\bm{\xi}}
\newcommand{\boldu}{\mbox{\boldmath $u$}}
\newcommand{\rd}{{\rm d}}
\newcommand{\bx}{{\bf{x}}}
\newcommand{\by}{{\bf{y}}}
\newcommand{\bu}{{\bf{u}}}
\newcommand{\bv}{{\bf{v}}}
\newcommand{\bw}{{\bf{w}}}
\newcommand{\bh}{{\bf{h}}}
\newcommand{\al}{\alpha}
\newcommand{\be}{\begin{equation}}
\newcommand{\ee}{\end{equation}}
\newcommand{\e}{{\varepsilon}}
\newcommand{\la}{\lambda}
\newcommand{\om}{{\omega}}
\newcommand{\LL}{{\rm L}}
\newcommand{\PP}{{\rm P}}
\newcommand{\QQ}{{\rm Q}}
\newcommand{\cN}{{\cal N}}
\newcommand{\rU}{{\rm U}}
\def\RR{{\mathbb R}}
\renewcommand{\b}[1]{\bm{\mathrm{#1}}} 
\renewcommand{\cal}{\mathcal}
\newcommand{\ii}{\mathrm{i}} 
\newcommand{\col}{\mathrel{\mathop:}}
\renewcommand{\epsilon}{\varepsilon}
\renewcommand{\leq}{\leqslant}
\renewcommand{\geq}{\geqslant}
\renewcommand{\le}{\leq}
\renewcommand{\ge}{\geq}
\renewcommand{\P}{\mathbb{P}}
\newcommand{\E}{\mathbb{E}}
\newcommand{\C}{\mathbb{C}}
\newcommand{\N}{\mathbb{N}}
\newcommand{\NN}{\mathbb{N}}
\newcommand{\pB}[1]{\Bigl({#1}\Bigr)}
\newcommand{\hb}[1]{\bigl\{{#1}\bigr\}}
\newcommand{\abs}[1]{\lvert #1 \rvert}
\newcommand{\scalar}[2]{\langle{#1} \mspace{2mu}, {#2}\rangle}
\DeclareMathOperator{\var}{Var}
\DeclareMathOperator{\im}{Im}
\DeclareMathOperator{\OO}{O}
\DeclareMathOperator{\UU}{U}
\theoremstyle{plain} 
\newtheorem{theorem}{Theorem}[section]
\newtheorem*{theorem*}{Theorem}
\newtheorem{lemma}[theorem]{Lemma}
\newtheorem*{lemma*}{Lemma}
\newtheorem{corollary}[theorem]{Corollary}
\newtheorem*{corollary*}{Corollary}
\newtheorem{proposition}[theorem]{Proposition}
\newtheorem*{proposition*}{Proposition}
\newtheorem{definition}[theorem]{Definition}
\newtheorem*{definition*}{Definition}
\newtheorem*{example*}{Example}
\newtheorem{remark}[theorem]{Remark}
\newtheorem*{remark*}{Remark}
\newtheorem*{remarks*}{Remarks}
\renewcommand{\subsection}{\@startsection
{subsection}
{2}
{0mm}
{-\baselineskip}
{0 \baselineskip}
{\normalfont\itshape}} 
\newcommand{\nc}{\normalcolor}
\def\bN{{\mathbb N}}
\newcommand{\Tr}{\mbox{Tr\,}}
\newcommand{\bq}{{\bf q}}
\renewcommand{\b}[1]{\boldsymbol{\mathrm{#1}}} 
\def\@empty{}
\def\author#1{\par
    {\centering{\authorfont#1}\par\vspace*{0.05in}}
}
\def\titlefont{\fontsize{13}{15}\bfseries\boldmath\selectfont\centering{}}
\def\authorfont{\fontsize{13}{15}}
\def\abstractfont{\fontsize{8}{10}}
\let\affiliationfont\rhfont
\def\address#1{\par
    {\centering{\affiliationfont#1\par}}\par\vspace*{11pt}
}
\def\keywords#1{\par
    \vspace*{8pt}
    {\authorfont{\leftskip18pt\rightskip\leftskip
    \noindent{\it\small{Keywords}}\/:\ #1\par}}\vskip-12pt}
\def\body{
\setcounter{footnote}{0}
\def\thefootnote{\alph{footnote}}
\def\@makefnmark{{$^{\rm \@thefnmark}$}}
}
\def\title#1{
    \thispagestyle{plain}
    \vspace*{-14pt}
    \vskip 79pt
    {\centering{\titlefont #1\par}}%
    \vskip 1em
}
\renewenvironment{abstract}{\par%
    \vspace*{6pt}\noindent 
    \abstractfont
    \noindent\leftskip18pt\rightskip18pt
}{%
  \par}
\renewcommand{\b}[1]{\boldsymbol{\mathrm{#1}}} 
\newcommand{\f}[1]{\boldsymbol{\mathrm{#1}}} 
\newcommand{\for}{\qquad \text{for} \quad}
\renewcommand{\section}{\@startsection
{section}
{1}
{0mm}
{-2\baselineskip}
{1\baselineskip}
{\normalfont\large\scshape\centering}} 
\begin{document}


\title{The Eigenvector Moment Flow and local Quantum Unique Ergodicity}

\vspace{1.2cm}
\noindent\begin{minipage}[b]{0.5\textwidth}

 \author{P. Bourgade}

\address{Cambridge University and Institute for Advanced Study\\
   E-mail: bourgade@math.ias.edu}
 \end{minipage}
\begin{minipage}[b]{0.5\textwidth}

 \author{H.-T. Yau}

\address{Harvard University and Institute for Advanced Study\\
   E-mail: htyau@math.harvard.edu}

 \end{minipage}

~\vspace{0.3cm}

\begin{abstract}
We prove that  the distribution  of  eigenvectors of generalized Wigner matrices  is universal  both  in the bulk and at the edge. This includes a probabilistic version of local 
quantum unique ergodicity  and asymptotic normality  
of the eigenvector entries. The proof relies on analyzing the eigenvector flow under 
the Dyson Brownian motion. The key new ideas are:  (1) the introduction of the eigenvector moment flow,
a multi-particle random walk in a random environment, (2) an effective estimate on the regularity of this flow 
based on maximum principle  and (3)  optimal  finite speed of  propagation 
holds for  the eigenvector moment flow with very high probability.

\end{abstract}

\keywords{Universality, Quantum unique ergodicity, Eigenvector moment flow.} 
\vspace{.5cm}

\tableofcontents

\let\thefootnote\relax\footnote{\noindent The work of P. B. is partially supported by the NSF grant DMS1208859. The work of H.-T. Y. is partially supported by the NSF grant DMS1307444 and the Simons Investigator Fellowship.}

\newpage

\section{Introduction}

Wigner envisioned that the laws of  the eigenvalues of  large random matrices are new paradigms for  universal statistics  of 
large correlated quantum systems. Although this vision has not been proved for any truly interacting quantum system, 
it is generally considered to be valid for a wide range of models.  For example, the quantum chaos conjecture by 
Bohigas-Giannoni-Schmit \cite{BohGiaSch1984} asserts that the eigenvalue statistics of the Laplace operator on a domain or manifold
are given by the random matrix statistics, provided that the corresponding classical dynamics are chaotic. Similarly, 
one expects that the  eigenvalue statistics of random Schr{\"o}dinger operators (Anderson tight binding models) are  given by the random matrix statistics  
in the delocalization regime. Unfortunately, both  conjectures are far beyond the reach of the current mathematical technology.

In Wigner's original theory,  the eigenvector behaviour plays no role. As suggested by the Anderson model, 
random matrix statistics  coincide with delocalization of eigenvectors. 
A strong notion of delocalization, at least in terms of ``flatness of the eigenfunctions",  is the quantum ergodicity.
For the Laplacian on a negative curved compact Riemannian 
manifold, Shnirel'man \cite{Shn1974}, Colin de Verdi\`ere \cite{Col1985} and  Zelditch \cite{Zel1987} proved that 
quantum ergodicity holds. More precisely, let 
$(\psi_k)_{k\geq 1}$ denote an orthonormal basis of eigenfunctions of the Laplace-Beltrami operator, associated with increasing eigenvalues, on a negative curved manifold 
$\mathcal{M}$ 
(or  more generally, assume only that the  geodesic flow of $\mathcal{M}$ is ergodic)  with volume measure  $\mu$.  Then, 
for any open set $A\subset \mathcal{M}$, one has 
$$
\lim_{\lambda \to \infty  } \frac 1 {N(\lambda)} \sum_{j: \lambda_j \le \lambda} \Big |  \int_A    |\psi_j (x)|^2 \mu(\rd x) - \int_A   \mu(\rd x) \Big |^2 
= 0,
$$
where $N(\lambda) = | \{j:  \lambda_j \le \lambda\}|$.
Quantum ergodicity was also proved for  $d$-regular graphs under certain assumptions  on the injectivity radius and  spectral gap of the adjacency matrices \cite{AnaLeM2013}. Random graphs are
considered a good paradigm for many ideas related to quantum chaos \cite{KotSmi1997}.

An even stronger notion of delocalization is the quantum unique ergodicity conjecture (QUE) proposed by Rudnick-Sarnak \cite{RudSar1994},  
i.e.,  for any negatively curved compact Riemannian manifold $\mathcal{M}$, the eigenstates  become equidistributed with respect to the volume measure $\mu$:  for any open $A\subset \mathcal{M}$ we have
\begin{equation}\label{eqn:QUE}
\int_A|\psi_k(x)|^2\mu(\rd x)\underset{k\to\infty}{\longrightarrow}\int_A\mu(\rd x).
\end{equation}
Some numerical evidence exists for both eigenvalue statistics and the QUE, but
a proper understanding of the semiclassical limit of chaotic systems is still missing. 
One case for which QUE was rigorously proved concerns arithmetic surfaces, 
thanks to tools from number theory and ergodic theory on homogeneous spaces 
\cite{Lin2006,Hol2010,HolSou2010}. For results in the case of general 
compact Riemannian manifolds whose geodesic flow is Anosov, see \cite{Ana2008}.

A major class of matrices for which one expects that  Wigner's vision holds is  the Wigner matrices,  i.e., random matrices with matrix elements distributed by 
identical mean-zero random variables. For this class of matrices, 
the Wigner-Dyson-Mehta conjecture states that the local statistics
are independent of 
the  laws of the matrix elements and depend only on the symmetry class. 
This conjecture was recently solved for an even more general class: the 
generalized Wigner matrices for which the distributions of matrix entries can vary and have 
 different  variances. (See \cite{ErdYauYin2012Univ,EYYBernoulli} and \cite{ErdYau2012} for a review.
For earlier results on this conjecture for Wigner matrices, see 
\cite{ErdSchYau2011,TaoVu2011} for the bulk of the spectrum and  \cite{Sos1999,TaoVu2010,  ErdYauYin2012Rig} for the edge).
One key ingredient of the method initiated in  \cite{ErdSchYau2011}  proceeds by interpolation between Wigner and Gaussian ensembles through Dyson Brownian motion, a matrix process that induces an autonomous evolution of eigenvalues.
The fundamental conjecture for Dyson Brownian motion,  the Dyson conjecture,  states  that the time 
to local equilibrium is of order $ t \gtrsim 1/N$, where $N$ is the size of the matrix. This conjecture was resolved 
in   \cite{ErdYauYin2012Rig}  (see  \cite{ErdSchYau2011} for the earlier results) and is the underlying reason for the universality. 

 Concerning the eigenvectors distribution, 
complete delocalization  was proved in  \cite{ErdYauYin2012Rig}
for generalized Wigner matrices  
 in the following sense
 : with very high  probability 
$$
\max |u_i(\alpha)|\leq \frac{(\log N)^{C \log \log N}}{\sqrt{N}},
$$
 where $C$ is a fixed constant and the maximum ranges over all coordinates $\alpha$ of the $\LL^2$-normalized eigenvectors, $u_1,\dots,u_N$ (a stronger estimate was obtained for Wigner matrices in \cite{ErdSchYau2009}, 
 see also \cite{BroLin2013} for a delocalization bound for the Laplacian on deterministic regular  graphs).
Although this bound prevents concentration of eigenstates onto a set of size less than
$N(\log N)^{- C \log\log N}$, it does not imply 
the ``complete flatness" of type (\ref{eqn:QUE}). In fact, if the eigenvectors are distributed by the 
Haar measure  on the orthogonal group,  the weak convergence 
\begin{equation}\label{eqn:Borel}
\sqrt{N}u_i(\alpha)\to
\mathscr{N}
\end{equation}
holds, where $\mathscr{N}$ is a standard Gaussian random variable and the eigenvector components are asymptotically independent.
Since the eigenvectors of GOE are distributed by the Haar measure  on the orthogonal group, 
this asymptotic normality (\ref{eqn:Borel}) holds for GOE (and a similar statement holds for GUE). 
For Wigner ensembles, by comparing with GOE, this property was proved for eigenvectors in the bulk by Knowles-Yin and Tao-Vu \cite{KnoYin2011, TaoVu2012} 
under the condition 
that the first  four moments of the matrix elements of the  Wigner ensembles match those of the standard normal distribution. 
For eigenvectors near the edges, the matching condition can be reduced to only  the first two moments \cite{KnoYin2011}.

In this paper, we develop a completely new  method to show that this asymptotic normality (\ref{eqn:Borel}) and independence
of eigenvector components hold  for generalized Wigner matrices without any moment matching condition. 
In particular, even the second moments are allowed to vary as long as the matrix stays inside 
the generalized Wigner class.
From the law of large numbers of independent random variables, 
this implies the {\it local  quantum unique ergodicity}, to be specified below,
with  high probability.  In fact, we will prove a stronger form of asymptotic normality in the sense 
that any projection of the eigenvector is asymptotically normal, see Theorem  \ref{thm:main}. 
This can be viewed as the eigenvector universality for the  generalized Wigner ensembles.

The key idea in this new approach is to analyze the ``Dyson eigenvector flow". More precisely, 
the Dyson Brownian motion is induced by the dynamics in which matrix elements undergo independent Brownian motions. 
The same dynamics on matrix elements yield a flow on the eigenvectors. This eigenvector flow, 
which we will call the {\it Dyson eigenvector flow}, was computed   
in the context of Brownian motion on ellipsoids \cite{NorRogWil1986}, real Wishart processes \cite{Bru1989}, and for GOE/GUE in \cite{AndGuiZei2010} (see also \cite{AllGui2013}). This flow is a diffusion process  on a compact Lie group ($O(N)$ or $U(N)$) 
endowed with a Riemannian metric. This diffusion process roughly speaking can be described as follows. 
We first randomly choose two eigenvectors, $u_i$ and $u_j$. Then we randomly rotate these two vectors 
on the circle spanned by them with a rate  $(\lambda_i-\lambda_j)^{-2}$ depending on the eigenvalues. 
Thus the eigenvector flow depends on the eigenvalue dynamics. If we freeze the eigenvalue flow, 
the eigenvector flow is a  diffusion with time dependent singular coefficients depending on the eigenvalues.

Due to its complicated structure, the Dyson eigenvector flow has never been analyzed.
Our key observation is that 
the dynamics of the  moments of the eigenvector entries can be viewed as a \nc 
multi-particle random walk in a random environment.  The number of particles 
of this flow is  one half of  the degree of polynomials in the eigenvector entries,
and   the (dynamic) random environment is  given by  jump rates depending on the eigenvalues. 
We shall call this flow the  {\it eigenvector moment flow}. If  there is only one particle,
this flow is the random walk with the random jump rate  $(\lambda_i-\lambda_j)^{-2}$ between two integer locations $i$ and $j$. 
This one dimensional random walk process   was analyzed locally in \cite{ErdYau2012singlegap}  
for the purpose of the single gap universality between eigenvalues.
An important 
result of \cite{ErdYau2012singlegap} is the H{\"o}lder regularity of the solutions. 
In higher dimensions, the jump rates depend on the locations of nearby particles and the flow is not a simple tensor product of 
the one dimensional process. Fortunately, we find   that this flow is  reversible with respect to an explicit equilibrium measure. 
The H{\"o}lder regularity argument in \cite{ErdYau2012singlegap} can  be extended to any  dimension to prove that 
the solutions of 
the moment flow are locally H{\"o}lder continuous. 
From this result and the local semicircle law (more precisely, the isotropic local semicircle law proved in \cite{KnoYin2012} and \cite{BloErdKnoYauYin2013}), 
one can  obtain that the  bulk eigenvectors  generated by a Dyson eigenvector flow satisfy local quantum unique ergodicity, and the law of the entries  of the eigenvectors 
are  Gaussian.

Instead of showing the H{\"o}lder regularity, we will directly prove that the solution to the eigenvector moment flow  converges 
to a constant. This proof is based on 
a maximum principle for parabolic differential equations and the local isotropic law \cite{BloErdKnoYauYin2013} previously mentioned. 
It yields the convergence of  the eigenvector moment flow to a constant for   $t\gtrsim N^{-1/4}$ with explicit error bound. 
This immediately implies that  all eigenvectors (in the bulk and at the edge)  generated by a Dyson eigenvector flow satisfy local quantum unique ergodicity, and the law of the entries  of the eigenvectors 
are  Gaussian.

The time to equilibrium $t\gtrsim N^{-1/4}$ mentioned  above is not optimal and the correct scaling of relaxation to equilibrium is $t \sim N^{-1}$ 
in the bulk, similar to Dyson's conjecture for  relaxation of bulk eigenvalues  to local equilibrium. In other words, we expect that Dyson's conjecture 
can be extended to the eigenvector flow bulk as well.  We will give a positive answer to this question in
Theorem \ref{thm:deterministic}. A key tool in proving this theorem is a finite speed of propagation estimate for the eigenvector moment flow. 
An estimate of this type   was     first proved  in \cite[Section 9.6]{ErdYau2012singlegap},  
but it requires a difficult level repulsion estimate.  In Section 6, we will prove an optimal  finite speed of propagation 
estimate without using any level repulsion estimate.

In order to prove that the eigenvectors of the original matrix ensemble satisfy quantum ergodicity, it remains to approximate the Wigner matrices by 
Gaussian convoluted ones,  i.e., matrices that are a small time solution to the Dyson Brownian motion. We invoke the Green function comparison theorem in a version similar to the one stated in \cite{KnoYin2011}.
For bulk eigenvectors, we can remove this small Gaussian component by a continuity principle instead of the Green function comparison theorem:
we will show that the Dyson Brownian motion preserves the detailed behavior
of eigenvalues and eigenvectors  up to time $N^{-1/2}$ directly by using the It{\^o} formula. 
This approach is much more direct and there is no need to construct moment matching matrices.

The  eigenvector moment flow developed in this paper can be applied to other random matrix models.   For example, the local  quantum unique ergodicity holds for covariance matrices (for the associated flow and results, see Appendix C)  and a certain class of  Erd{\H o}s-R{\' e}nyi graphs.  To avoid other technical issues, 
 in this paper  we only consider generalized Wigner matrices. 
Before stating the results and giving more details about the proof, we recall the definition of the considered ensemble.\\

\begin{definition}\label{def:wig}
A generalized Wigner matrix $H_N$ is an Hermitian or symmetric $N\times N$ matrix whose upper-triangular matrix elements $h_{ij}=\overline{h_{ji}}$, $i\leq j$, are independent random variables with mean zero and variance $\sigma_{ij}^2=\E(|h_{ij}|^2)$ satisfying the following additional two conditions:
\begin{enumerate}[(i)]
\item Normalization: for any $j\in\llbracket 1,N\rrbracket$, $\sum_{i=1}^N\sigma_{ij}^2=1$.
\item Non-degeneracy: there exists a constant $C$, independent of $N$, such that $C^{-1}N^{-1}\leq \sigma_{ij}^2\leq C N^{-1}$ for all $i,j\in\llbracket 1,N\rrbracket$. In the Hermitian case, we furthermore
assume that, for any $i<j$, $\E((\bh_{ij})^*\bh_{ij})\geq c N^{-1}$ in the sense of inequality between 
$2\times2$ positive matrices, where $\bh_{ij}=(\Re(h_{ij}),\Im(h_{ij}))$.
\end{enumerate}
\end{definition}

\noindent Moreover, we assume that all moments of the entries are finite:
for any $p\in\mathbb{N}$ there exists a constant $C_p$ such that for any $i,j,N$ we have
\begin{equation}\label{eqn:moments}
\E(|\sqrt{N}h_{ij}|^p)<C_p.
\end{equation}
In the following, $(u_i)_{i=1}^N$ denotes  an orthonormal
eigenbasis for $H_N$, a matrix from the (real or complex) generalized Wigner ensemble. The eigenvector $u_i$ is associated with the eigenvalue  $\la_i$, where $\la_1\leq\dots\leq \la_N$.

\begin{theorem}\label{thm:main}
Let $(H_N)_{N\geq 1}$ be a sequence of generalized Wigner matrices. Then there is a $\delta> 0$ such that for any 
 $m\in\mathbb{N}$,  $I\subset \mathbb{T}_N:=  \llbracket 1,N^{1/4} \rrbracket  \cup 
\llbracket  N^{1-\delta} , N- N^{1-\delta} \rrbracket \cup \llbracket N-N^{1/4},N\rrbracket$ with  $|I|=m$ and  for any 
 unit vector $\bq$  in $\RR^N$, we have 
\begin{align}
&\sqrt{N}(|\langle \bq,u_{k}\rangle|)_{k\in I}\to(|\mathscr{N}_j|)_{j=1}^m\hspace{2.9cm}\mbox{in the symmetric case},\label{eqn:convG}\\
&\sqrt{2N}(|\langle \bq,u_{k}\rangle|)_{k\in I}\to(|\mathscr{N}^{(1)}_j+\ii\mathscr{N}^{(2)}_j|)_{j=1}^m\hspace{1cm}\mbox{in the Hermitian case,}\notag
\end{align}
in the sense of convergence of moments, where all $\mathscr{N}_j,\mathscr{N}^{(1)}_j,\mathscr{N}^{(2)}_j$, are independent standard Gaussian random variables.
This convergence holds uniformly in $I$ and  $|\bq|=1$.
More precisely, 
for any polynomial $P$ in $m$ variables, there exists $\e=\e(P)>0$ such that for large enough $N$ we have
\begin{align}\label{eqn:mainSym}
&\sup_{I\subset  \mathbb{T}_N\nc,|I|=m,|\bq|=1}
\left|\E\left(P\left(\left(N|\langle \bq,u_{k}\rangle|^2\right)_{k\in I}\right) \right)-\E \left(P\left((|\mathscr{N}_j|^2)_{j=1}^m\right)\right)\right|\leq N^{-\e},\\
&\sup_{I\subset \mathbb{T}_N\nc,|I|=m,|\bq|=1}\notag
\left|\E\left(P\left(\left(2N|\langle \bq,u_{k}\rangle|^2\right)_{k\in I}\right)  \right)-\E P\left((|\mathscr{N}^{(1)}_j|^2+|\mathscr{N}^{(2)}_j|^2)_{j=1}^m)\right)\right|\leq N^{-\e},
\end{align}
respectively for the real and complex generalized Wigner ensembles. 
\end{theorem}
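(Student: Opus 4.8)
\textbf{Proof strategy for Theorem \ref{thm:main}.}

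The plan is to reduce the statement about eigenvectors of a generic generalized Wigner matrix $H_N$ to a statement about the eigenvectors of a Gaussian-divisible matrix $H_t = e^{-t/2} H_0 + \sqrt{1-e^{-t}}\, G$, where $G$ is GOE (resp. GUE), $H_0$ is the original ensemble, and $t$ is a small power of $N$, say $t = N^{-1/2+\omega}$. This requires three independent ingredients, corresponding to the three novelties advertised in the introduction. \emph{First}, I would set up the \emph{eigenvector moment flow}: writing $z_{ik} = \sqrt{N}\langle \bq, u_k\rangle$ and tracking symmetric polynomials in the squares $z_{ik}^2$ along the Dyson eigenvector flow, one shows that the observable $f_t(\boldsymbol\lambda) = \E[\,\prod_k (z_{i_k})^{2m_k}\mid \boldsymbol\lambda]$ satisfies a closed parabolic equation $\partial_t f_t = \mathscr{B}(t) f_t$, where $\mathscr{B}(t)$ is the generator of a multi-particle random walk with jump rates $(\lambda_i-\lambda_j)^{-2}$ (up to a factor depending on the occupation numbers), reversible with respect to an explicit product-type equilibrium measure whose marginals are exactly the moments of $|\mathscr{N}|$ (resp. $|\mathscr{N}^{(1)}+\ii\mathscr{N}^{(2)}|$). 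The target values in \eqref{eqn:convG} are precisely the equilibrium values of this flow, so everything reduces to showing relaxation of $f_t$ to its average.

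\emph{Second}, I would prove that $f_t$ converges to a constant (the equilibrium value) for $t \gtrsim N^{-1/4}$ with a quantitative error $N^{-\e}$. The mechanism is a maximum-principle / Sobolev-type argument for the parabolic equation: conditioning on a high-probability event on which the eigenvalue rigidity estimates and, crucially, the \emph{isotropic} local semicircle law of \cite{BloErdKnoYauYin2013} hold, one controls the Dirichlet form associated with $\mathscr{B}(t)$ and shows that the oscillation of $f_t$ contracts. The isotropic law is what allows the initial data $f_0$ (which involves $\langle \bq, u_k\rangle$ for a \emph{fixed} generic $\bq$) to be compared with the reference measure after the short regularization; without it one could only handle $\bq = \mathbf{e}_\alpha$. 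This step gives the conclusion for $H_t$ with $t = N^{-1/4+\omega}$.

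\emph{Third}, I would remove the Gaussian component. For the bulk indices $\llbracket N^{1-\delta}, N-N^{1-\delta}\rrbracket$, a continuity (Itô-calculus) argument shows that along the Dyson Brownian motion both eigenvalues and the quantities $N|\langle \bq, u_k\rangle|^2$ change by at most $N^{-c}$ over times $t \le N^{-1/2}$, so no comparison of moments is needed; for the edge indices $\llbracket 1, N^{1/4}\rrbracket$ and $\llbracket N-N^{1/4}, N\rrbracket$, I would instead invoke the Green function comparison theorem in the form of \cite{KnoYin2011}, matching the first four (bulk) or two (edge) moments of the entries of $H_0$ with those of $H_t$ and propagating control of polynomials in $|\langle\bq,u_k\rangle|^2$ through resolvent expansions. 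Combining the three steps, with a union bound over the $\binom{N}{m}$ choices of $I$ (absorbed into the $N^{-\e}$ by taking the relaxation error slightly smaller) and a net argument over $|\bq|=1$ (the left sides of \eqref{eqn:mainSym} being continuous in $\bq$), yields the uniform bound.

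\textbf{Main obstacle.} I expect the crux to be the second step: establishing the quantitative relaxation of the \emph{multi-particle} moment flow. The one-particle case is the random walk with rates $(\lambda_i-\lambda_j)^{-2}$ analyzed in \cite{ErdYau2012singlegap} via Hölder regularity, but in several particles the jump rates genuinely depend on the configuration of nearby particles, so the process is not a tensor product and the De Giorgi--Nash--Moser / maximum principle machinery must be reworked for this coupled dynamics; simultaneously one must feed in the isotropic local law with errors sharp enough to survive the $N^{-\e}$ target and uniform in $\bq$. Getting the improved time scale $t \gtrsim N^{-1/4}$ (rather than merely Hölder continuity) out of the maximum principle, and later pushing to the optimal $t\sim N^{-1}$ via the finite-speed-of-propagation estimate of Section 6, is where the real work lies.
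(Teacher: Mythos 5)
Your overall architecture --- moment flow, relaxation at $t\gtrsim N^{-1/4}$ driven by the isotropic local law, then removal of the Gaussian component --- is the paper's architecture, and your first two steps are essentially Sections 3 and 4 (one small correction: the relaxation is obtained by a pure maximum principle at the arg-max of $f_t$ plus Gronwall, not by a Dirichlet-form/De Giorgi--Nash--Moser oscillation argument; the Dirichlet form only enters later, in the finite-speed estimate). The problems are concentrated in your third step.

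First, there is a time-scale mismatch in your bulk route. The continuity/It\^o argument (the paper's Appendix A) only shows that the Dyson Brownian motion preserves eigenvector observables up to times $t\lesssim N^{-1/2}$, whereas your step two only produces Gaussian behaviour at $t\gtrsim N^{-1/4}\gg N^{-1/2}$. So ``continuity for the bulk, hence no moment comparison needed'' is not closed as stated: to run that route one must first upgrade the relaxation time to $t\sim N^{-1+\e}$, which is the content of Sections 6--7 (finite speed of propagation plus a localized maximum principle) and which you mention only as a closing aspiration rather than as a prerequisite of step three. The paper's actual proof of Theorem \ref{thm:main} avoids this entirely: it applies the eigenvector (Green function) comparison theorem, Theorem \ref{t2}, to \emph{all} of $\mathbb{T}_N$ at $t=N^{-1/4+\delta}$, after constructing an initial matrix $H_0$ whose evolved first three moments match those of the target exactly and whose fourth moments differ by $N^{-c}$ (Lemma 3.4 of \cite{EYYBernoulli}); the dynamical continuity route is presented only as an alternative for bulk indices, and only after the optimal $N^{-1}$ relaxation has been established.

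Second, you omit the level repulsion estimate, which is the second required input to Theorem \ref{t2} (one of the two compared ensembles must satisfy it) and is the sole reason the index set is restricted to $\mathbb{T}_N$: as currently written in the literature, level repulsion is available for $\hat k\le N^{1/4}$ and in the bulk, but not in the intermediate regime. Without identifying this input your proposal cannot account for the shape of the theorem's hypothesis. A minor point: no union bound over $I$ or net over $\bq$ is needed, since the maximum-principle estimate is deterministic on the high-probability event of Lemma \ref{lem:A13} and is uniform over configurations and over $\bq$ by construction.
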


The restriction on the eigenvector in the immediate regime $\llbracket N^{1/4} ,  N^{1-\delta} \rrbracket$  (and similarly for its reflection) 
was due to that near  the edges the level repulsion estimate, Definition \ref{lr},  (or the gap universality) was only written  in the region  $\llbracket 1,N^{1/4} \rrbracket$
(see the discussion after Definition \ref{lr} for references regarding this matter).
There is no doubt that these results can be extended to the the immediate regime with only minor  modifications in the proofs. 
Here we state our theorem based on existing written results.

The normal convergence (\ref{eqn:convG}) was proved in \cite{TaoVu2012} under the assumption that the entries of $H_N$ have moments matching the standard Gaussian distribution up to order four, and if their distribution is symmetric (in particular the fifth moment vanishes).

 This convergence of moments implies in particular joint weak convergence.
Choosing $\bq$ to be an element of the canonical basis, Theorem \ref{thm:main} implies in particular that any entry of an  eigenvector is asymptotically normally distributed, modulo the (arbitrary) phase choice. Because the above convergence holds for any $|\bq|=1$, 
asymptotic joint normality of the eigenvector entries also holds.
Since eigenvectors are defined only up to a phase, 
we define the 
equivalence relation $u\sim v$ if  $u=\pm v$ in the symmetric case and $u=\lambda v$ for some $|\lambda|=1$
in the Hermitian case.

\begin{corollary}[Asymptotic normality  of eigenvectors for generalized Wigner matrices]\label{cor:Gauss}
Let $(H_N)_{N\geq 1}$ be a sequence of generalized Wigner matrices, $\ell\in\mathbb{N}$. Then for any  $k\in  \mathbb{T}_N$ and  $J\subset \llbracket 1 ,  N\rrbracket $ with 
$|J|=\ell$, we have
\begin{align*}
&\sqrt{N}(u_k(\alpha))_{\alpha\in J}
\to(\mathscr{N}_j)_{j=1}^\ell\hspace{2.4cm}\mbox{for the real generalized Wigner ensemble},\\
&\sqrt{2N}(u_k(\alpha))_{\alpha\in J}
\to(\mathscr{N}^{(1)}_j+\ii\mathscr{N}^{(2)}_j)_{j=1}^\ell\hspace{0.5cm}\mbox{for the complex generalized Wigner ensemble,}
\end{align*}
in the sense of convergence of moments  modulo $\sim$, where all $\mathscr{N}_j, \mathscr{N}^{(1)}_j,
\mathscr{N}^{(2)}_j$, are independent standard Gaussian variables.
 More precisely,  \nc 
for any polynomial $P$ in $\ell$ variables (resp. $Q$ in $2\ell$ variables) there exists $\e$ 
depending on $P$ (resp. $Q$) such that, for large enough $N$,
\begin{align}
&\underset{J\subset\llbracket 1,N\rrbracket,|J|=\ell,\atop k\in   \mathbb{T}_N}{\sup}\label{eqn:corSym}
\left|\E\left(P\left(\sqrt{N}(e^{\ii\omega}u_k(\alpha))_{\alpha\in J}\right)  \right)-\E P\left((\mathscr{N}_j)_{j=1}^\ell\right)\right|\leq N^{-\e},\\
&\underset{ J\subset\llbracket 1,N\rrbracket,|J|=\ell,\atop k\in   \mathbb{T}_N}{\sup}
\left|\E\left(Q\left(\sqrt{2N}(e^{\ii\omega}u_k(\alpha),e^{-\ii\omega}\overline{u_k(\alpha)})_{\alpha\in J}\right) \right)-\E Q\left((\mathscr{N}^{(1)}_j+\ii\mathscr{N}^{(2)}_j,\mathscr{N}^{(1)}_j-\ii\mathscr{N}^{(2)}_j)_{j=1}^\ell\right)\right|\leq N^{-\e},\notag
\end{align}
for the symmetric (resp. Hermitian) generalized Wigner ensembles. Here $\omega$ is independent of $H_N$ and
uniform on the binary set  $\{0,\pi\}$ (resp. $(0,2\pi)$). 
\end{corollary}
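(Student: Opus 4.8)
The plan is to derive Corollary \ref{cor:Gauss} from Theorem \ref{thm:main} by a standard ``complexification + polarization'' argument that converts the statement about absolute values of scalar products into a statement about signed (or phase-corrected) eigenvector entries. First I would observe that the left sides of the two convergences in the corollary concern the vector $(u_k(\alpha))_{\alpha \in J}$, which is a projection of $u_k$ onto the coordinate subspace spanned by $\{e_\alpha : \alpha \in J\}$; since $|J| = \ell$ is fixed, this is a finite-dimensional object, and to prove convergence of moments modulo $\sim$ it suffices to control, for every multi-index, the quantity $\E\big(e^{\ii m \omega} \prod_{\alpha} u_k(\alpha)^{a_\alpha}\overline{u_k(\alpha)}^{b_\alpha}\big)$, where the phase average over $\omega \in \{0,\pi\}$ (resp.\ $(0,2\pi)$) kills all terms with $\sum a_\alpha - \sum b_\alpha$ odd (resp.\ nonzero), exactly matching the vanishing of the corresponding Gaussian moments. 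So the task reduces to matching the surviving ``phase-balanced'' moments with Gaussian ones.

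The main step is to express these phase-balanced monomials in the $u_k(\alpha)$ in terms of the quantities $N|\langle \bq, u_k\rangle|^2$ controlled by Theorem \ref{thm:main}. For a single coordinate this is immediate: $\sqrt{N}u_k(\alpha) = \sqrt{N}\langle e_\alpha, u_k\rangle$, and taking $\bq = e_\alpha$ in \eqref{eqn:mainSym} already gives the convergence of all even moments of $\sqrt{N}u_k(\alpha)$ to those of a real (resp.\ complex) Gaussian; the odd moments are handled because they either vanish by the phase average or, in the complex case, are unbalanced. For several coordinates and possibly several indices $k$, I would use polarization: any phase-balanced product $\prod_\alpha u_k(\alpha)^{a_\alpha}\overline{u_k(\alpha)}^{b_\alpha}$ can be written as a finite linear combination of terms of the form $\prod_r |\langle \bq_r, u_k\rangle|^{2}$ for suitable unit vectors $\bq_r$ chosen as normalized linear combinations of the $e_\alpha$ with coefficients $\pm 1, \pm \ii$ (the usual identity expressing a product of entries of a Hermitian rank-one matrix $u_k u_k^*$ through its quadratic forms $\langle \bq_r, u_k\rangle\overline{\langle \bq_r, u_k\rangle}$). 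Each such term is then a value of the polynomial $P$ appearing in Theorem \ref{thm:main} evaluated at $(N|\langle \bq_r, u_k\rangle|^2)_r$, with $\bq_r$ ranging over finitely many explicit unit vectors, so \eqref{eqn:mainSym} gives convergence with an $N^{-\e}$ rate, uniformly over $J$ and $k \in \mathbb{T}_N$. The joint statement over several $k \in I$ (needed if $P,Q$ involve entries of more than one eigenvector) comes from the version of Theorem \ref{thm:main} with $|I| = m > 1$, applied with the $\bq$'s built from the $e_\alpha$'s; here one uses that distinct eigenvectors $u_k$ are orthogonal, so a monomial mixing $u_k$ and $u_{k'}$ with $k \ne k'$ in a phase-unbalanced way again averages to zero, while balanced mixed monomials reduce to products of the $N|\langle \bq_r, u_{k}\rangle|^2$ over $k \in I$.

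The arithmetic bookkeeping — checking that the combinatorial identity from polarization exactly reproduces the joint moments of the independent Gaussian family $(\mathscr{N}_j)$ (resp.\ $(\mathscr{N}^{(1)}_j + \ii \mathscr{N}^{(2)}_j)$), including the correct normalization $\sqrt{N}$ vs.\ $\sqrt{2N}$ in the two symmetry classes — is routine once one notes that the target moments on the right of \eqref{eqn:convG} and of the corollary are themselves related by the same polarization identity applied to a Gaussian vector. The rate $N^{-\e}$ is inherited directly since the linear combination produced by polarization has a fixed finite number of terms (depending only on $P$, $Q$, and $\ell$), each estimated by Theorem \ref{thm:main}. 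The uniformity over $J$ and $k$ is likewise inherited because Theorem \ref{thm:main} is uniform over $|\bq| = 1$ and over $I \subset \mathbb{T}_N$.

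The part requiring the most care — though it is more bookkeeping than genuine obstacle — is the complex (Hermitian) case, where one must track both $u_k(\alpha)$ and $\overline{u_k(\alpha)}$ and the phase is continuous rather than binary. There the identity $Q\big(\sqrt{2N}(e^{\ii\omega}u_k(\alpha), e^{-\ii\omega}\overline{u_k(\alpha)})_\alpha\big)$, after averaging over $\omega \in (0,2\pi)$, retains precisely the monomials with equal total degree in the holomorphic and antiholomorphic variables, and these are exactly the ones expressible through $|\langle \bq_r, u_k\rangle|^2$; I would verify that the resulting combination matches $\E\, Q\big((\mathscr{N}^{(1)}_j + \ii\mathscr{N}^{(2)}_j, \mathscr{N}^{(1)}_j - \ii\mathscr{N}^{(2)}_j)_j\big)$ by the same polarization applied to a standard complex Gaussian vector, whose second moments $\E|\mathscr{N}^{(1)} + \ii\mathscr{N}^{(2)}|^2 = 2$ account for the $\sqrt{2N}$ scaling. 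No new probabilistic input beyond Theorem \ref{thm:main} is needed.
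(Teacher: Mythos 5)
Your overall route --- recovering the mixed moments of the entries $(u_k(\alpha))_{\alpha\in J}$ from the quantities $N|\langle\bq,u_k\rangle|^2$ of Theorem \ref{thm:main} by letting $\bq$ range over combinations of the $e_\alpha$, with the phase average over $\omega$ killing the unbalanced monomials --- is exactly the paper's strategy. However, one step fails as written. Your polarization of the rank-one matrix $u_ku_k^*$ turns a degree-$2m$ monomial $\prod_\alpha u_k(\alpha)^{a_\alpha}\overline{u_k(\alpha)}^{b_\alpha}$ into a linear combination of products $\prod_{r=1}^m|\langle\bq_r,u_k\rangle|^2$ with $m$ \emph{distinct} test vectors $\bq_r$ attached to the \emph{same} eigenvector $u_k$. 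Theorem \ref{thm:main} does not control such expectations: in \eqref{eqn:mainSym} the $m$ arguments of $P$ are $(N|\langle\bq,u_{k}\rangle|^2)_{k\in I}$, i.e.\ they are indexed by $m$ distinct eigenvector labels $k\in I$ sharing one common $\bq$, not by $m$ test vectors for a single $k$. So the assertion that ``each such term is a value of the polynomial $P$ appearing in Theorem \ref{thm:main}'' is a misreading of the theorem, and the claimed $N^{-\e}$ bound for these mixed-test-vector products is not available at that point in your argument.

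The gap is repairable, and the repair is essentially the paper's actual proof: instead of polarizing $u_ku_k^*$ pairwise, regard $\bq\mapsto\E\big((N|\langle\bq,u_k\rangle|^2)^m\big)-\E\big(|\langle\bq,\mathscr{N}\rangle|^{2m}\big)$ as a single polynomial of degree $2m$ in the $\ell$ coordinates $(q_\alpha)_{\alpha\in J}$. Theorem \ref{thm:main} with $|I|=1$ and $P(x)=x^m$ says this polynomial is uniformly $\OO(N^{-\e})$ on the unit sphere, hence on the unit ball, and a polynomial uniformly small on a ball has small coefficients (the paper extracts them one variable at a time by evaluating at $\ell+1$ points and inverting a Vandermonde matrix; this is the correct ``polarization in $\bq$''). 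Those coefficients are precisely the differences of the mixed entry moments, which yields \eqref{eqn:corSym}. A secondary point: in the Hermitian case your test vectors carry coefficients $\pm\ii$, while Theorem \ref{thm:main} is stated only for real unit vectors $\bq$; recovering the full complex moments $u_k(\alpha)\overline{u_k(\beta)}$, rather than their real parts, requires either the complex-$\bq$ version of the theorem or an additional argument, a point you should make explicit.
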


By characterizing the joint distribution of the entries of the eigenvectors, Theorem $\ref{thm:main}$ and Corollary \ref{cor:Gauss} imply that for any eigenvector a probabilistic equivalent of $(\ref{eqn:QUE})$ holds.
For $a_N:\llbracket 1,N\rrbracket\to[-1,1]$ we denote $|a_N|=
|\{1\leq \alpha\leq N:a_N(\alpha)\neq 0\}|$ the size of the support of $a_N$, and
$\langle u_k,a_N u_k\rangle=\sum|u_k(\alpha)|^2 a_N(\alpha)$.

\begin{corollary}[Local quantum unique ergodicity for generalized Wigner matrices]\label{cor:QUE}
Let $(H_N)_{N\geq 1}$ be a sequence of generalized (real or complex) Wigner matrices.
Then there exists $\e>0$ such that for any
$\delta>0$, there exists $C>0$
such that the following holds: for any
 $(a_N)_{N\geq 1}$, $a_N:\llbracket 1,N\rrbracket\to [-1,1]$ with $\sum_{\al=1}^N a_N(\alpha)=0$
and $k\in   \mathbb{T}_N$, 
we have
\be\label{localque}
\Prob\left(\left|\frac{N}{|a_N|}\langle u_k,a_N u_k \rangle\right|>\delta\right)\leq C\, \left(N^{-\e}+|a_N|^{-1}\right).
\ee
\end{corollary}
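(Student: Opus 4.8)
The plan is to deduce Corollary \ref{cor:QUE} from Corollary \ref{cor:Gauss} (equivalently from Theorem \ref{thm:main}, applied with a suitable choice of $\bq$) by a second moment / Chebyshev argument, reducing everything to the control of $\E(N\langle u_k, a_N u_k\rangle/|a_N|)^2$. First I would write
\[
\frac{N}{|a_N|}\langle u_k, a_N u_k\rangle = \frac{N}{|a_N|}\sum_{\alpha=1}^N a_N(\alpha)|u_k(\alpha)|^2 = \frac{1}{|a_N|}\sum_{\alpha\in\supp a_N} a_N(\alpha)\left(N|u_k(\alpha)|^2-1\right),
\]
using the normalization $\sum_\alpha a_N(\alpha)=0$ to subtract the constant $1$ from each entry. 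By Markov's inequality applied to the square, it suffices to bound
\[
\E\left(\frac{N}{|a_N|}\langle u_k, a_N u_k\rangle\right)^2 = \frac{1}{|a_N|^2}\sum_{\alpha,\beta} a_N(\alpha)a_N(\beta)\,\E\left[(N|u_k(\alpha)|^2-1)(N|u_k(\beta)|^2-1)\right].
\]
Split the double sum into the diagonal $\alpha=\beta$ and off-diagonal $\alpha\neq\beta$ terms. The diagonal terms contribute at most $|a_N|^{-2}\sum_\alpha a_N(\alpha)^2 \,\E(N|u_k(\alpha)|^2-1)^2 \leq |a_N|^{-1}\max_\alpha \E(N|u_k(\alpha)|^2-1)^2$, and the bracket here is $O(1)$ by Corollary \ref{cor:Gauss} (take $P$ the degree-two polynomial $P(x)=(x^2-1)^2$, $\ell=1$, $J=\{\alpha\}$), so the diagonal part is $O(|a_N|^{-1})$, matching the second term on the right of \eqref{localque}.

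The off-diagonal terms are where one must invoke the joint (two-entry) statement. For fixed $\alpha\neq\beta$, apply Corollary \ref{cor:Gauss} with $\ell=2$, $J=\{\alpha,\beta\}$ and $P(x_1,x_2)=(x_1^2-1)(x_2^2-1)$ (real case; the Hermitian case is analogous with $Q$ of the same shape in the variables $u_k(\alpha),\overline{u_k(\alpha)},u_k(\beta),\overline{u_k(\beta)}$ and the phase $\omega$, which drops out of $|u_k(\alpha)|^2$). Since $\mathscr N_1,\mathscr N_2$ are independent standard Gaussians, $\E[(\mathscr N_1^2-1)(\mathscr N_2^2-1)] = \E(\mathscr N_1^2-1)\cdot\E(\mathscr N_2^2-1) = 0$, so
\[
\left|\E\left[(N|u_k(\alpha)|^2-1)(N|u_k(\beta)|^2-1)\right]\right| \leq N^{-\e'}
\]
uniformly in $\alpha\neq\beta$ and $k\in\mathbb T_N$, where $\e'=\e'(P)>0$ is the exponent from \eqref{eqn:corSym}. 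Hence the off-diagonal contribution is bounded by $|a_N|^{-2}\cdot|a_N|^2\cdot N^{-\e'} = N^{-\e'}$ (using $|a_N(\alpha)|\leq 1$), giving the first term $N^{-\e}$ in \eqref{localque} with $\e=\e'$. Combining, $\E(N\langle u_k,a_Nu_k\rangle/|a_N|)^2 \leq C(N^{-\e}+|a_N|^{-1})$, and Markov/Chebyshev with the fixed threshold $\delta$ absorbs the $\delta^{-2}$ into the constant $C$, which is then allowed to depend on $\delta$ as in the statement.

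The only genuinely delicate point is making sure the bounds are \emph{uniform} in the pair $(\alpha,\beta)$ and in $k\in\mathbb T_N$; this is exactly the content of the $\sup_{J,k}$ in \eqref{eqn:corSym}, so no extra work is needed beyond quoting Corollary \ref{cor:Gauss} with the two-variable polynomial $P(x_1,x_2)=(x_1^2-1)(x_2^2-1)$ (whose associated exponent $\e(P)$ we rename $\e$). I would also note that one could bypass Corollary \ref{cor:Gauss} and go directly through Theorem \ref{thm:main} by choosing $\bq$ to be a unit combination supported on $\{\alpha,\beta\}$ and using that the moments of $|\langle\bq,u_k\rangle|^2$ for varying $\bq$ determine the needed mixed moments of $(|u_k(\alpha)|^2,|u_k(\beta)|^2)$; but routing through Corollary \ref{cor:Gauss} is cleaner since it already packages the joint-entry normality. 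The main (and essentially only) obstacle is conceptual rather than technical: recognizing that the mean-zero condition $\sum_\alpha a_N(\alpha)=0$ combined with the \emph{asymptotic independence} of distinct eigenvector entries forces the off-diagonal correlations to vanish, so that $N\langle u_k,a_Nu_k\rangle/|a_N|$ behaves like an average of $|a_N|$ nearly-independent mean-zero bounded-variance terms and hence concentrates at $0$ at rate $|a_N|^{-1/2}$ (plus the universality error $N^{-\e/2}$).
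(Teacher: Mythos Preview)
Your proposal is correct and follows essentially the same approach as the paper: a second-moment bound on $\frac{N}{|a_N|}\langle u_k,a_N u_k\rangle$, splitting into diagonal and off-diagonal contributions, invoking \eqref{eqn:corSym} to bound each, and concluding by Markov's inequality. The paper's proof is terser but identical in structure; your added detail (the explicit polynomials, the uniformity remark) is all accurate.
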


Under the condition that  the first  four moments of the matrix elements of the  Wigner ensembles match those of the standard normal distribution, 
\eqref{localque} can also be proved  from the results in  \cite{KnoYin2011, TaoVu2012};  the four moment matching were  reduced 
to  two moments for eigenvectors near the edges \cite{KnoYin2011}.

The quantum ergodicity for a class of sparse regular  graphs was proved  by Anantharaman-Le Masson \cite{AnaLeM2013}, 
partly based  on pseudo-differential calculus on graphs from \cite{LeM2013}. 
The main result in 
 \cite{AnaLeM2013} is for deterministic graphs, but for the purpose of this paper 
we only state its application to random graphs (see \cite{AnaLeM2013} for details and more general statements).
If $u_1,\dots,u_N$ are the ($\LL^2$-normalized) eigenvectors  of the discrete Laplacian
of a uniformly chosen $(q+1)$-regular graph with $N$ vertices, then for any fixed $\delta>0$ 
we have,  for any $q \ge 1$ fixed, 
$$
\Prob\left( \sharp \{ k: |\langle u_k, a_N u_k\rangle| >\delta\} > \delta N\right) \underset{N\to\infty}{\longrightarrow} 0,
$$
where $a_N$ may be random (for instance, it may depend on the graph).
The results in  \cite{AnaLeM2013} were focused on very sparse deterministic regular graphs and are very different from 
our setting for generalized Wigner matrices.

Notice that our result \eqref{localque} allows the test function to have a very small support
and it is valid for any $k$.  
This means that eigenvectors are flat even in ``microscopic scales".  However, 
the equation  \eqref{localque} does not imply that all eigenvectors are completely flat simultaneously 
with high probability, i.e., we have not proved  
the following statement:
$$
\Prob\left(\sup_{1\leq k\leq N}|\langle u_k,a_N u_k\rangle|>\delta\right)\to 0
$$
for $a_N$ with   support of order $N$.  
This strong form of QUE, however,  holds for the Gaussian ensembles.\\

In the following section, we will  define the Dyson vector flow and,
for the sake of completeness, prove the well-posedness of the eigenvector stochastic evolution.
In Section 3 we will  introduce the eigenvector moment flow
and prove the existence of an explicit reversible measure.
In Section 4, we  will prove  Theorem \ref{thm:main} under the additional assumption that $H_N$ is the sum 
of a generalized Wigner matrix and a Gaussian matrix with small variance. 
The proof in this section relies on a maximum principle for the eigenvector moment flow. 
We will prove  Theorem \ref{thm:main}  by using  a Green function comparison theorem in Section 5.
In Section 6, we will prove that the speed of propagation for  the eigenvector moment flow is  finite  with very high  probability. 
This estimate will enable us to prove in Section 7 that the 
relaxation to equilibrium for  the eigenvector moment flow  in the bulk  is of order  $t\gtrsim N^{-1}$. 
The appendices contain a continuity estimate for the Dyson Brownian motion up to  time $N^{-1/2}$, and  some basic  results concerning the generator of  the Dyson vector flow as well as analogue results for covariance matrices.

\section{Dyson Vector Flow}

In this section, we  first state  the stochastic differential equation for the eigenvectors 
under the Dyson Brownian motion. This evolution is 
given by (\ref{eqn:eigenvectorsSymmetric}) and (\ref{eqn:eigenvectorsHermitian}). We then give a concise form of the generator for this Dyson vector flow.
We will follow  the usual slight ambiguity of terminology  by naming both the matrix flow and the eigenvalue flow a Dyson Brownian motion. In case we wish to distinguish them, we will use matrix Dyson Brownian motion for the matrix flow.

\begin{definition}\label{def:OU}
Hereafter is our choice of normalization for the Dyson Brownian motion.
\begin{enumerate}[(i)]
\item
Let $B^{(s)}$ be a $N\times N$ matrix such that $B^{(s)}_{ij} (i<j)$ and $B^{(s)}_{ii}/\sqrt{2}$ are independent standard Brownian motions, and $B^{(s)}_{ij}=B^{(s)}_{ji}$.
The $N\times N$ symmetric Dyson Brownian motion $H^{(s)}$ with initial value $H^{(s)}_0$ is defined as
\begin{equation}\label{eqn:SymSDE}
H^{(s)}_{t}=H^{(s)}_0+\frac{1}{\sqrt{N}} B^{(s)}_t,
\end{equation}
\item Let $B^{(h)}$ be a $N\times N$ matrix such that $\Re(B^{(h)}_{ij}),\Im(B^{(h)}_{ij})  (i<j)$ and $B^{(h)}_{ii}/\sqrt{2}$ are independent standard Brownian motions, and $B^{(h)}_{ji}=
(B^{(h)}_{ij})^*$. The $N\times N$ Hermitian Dyson Brownian motion $H^{(h)}$ with initial value $H^{(t)}_0$
is
$$
H^{(h)}_{t}=H^{(h)}_{0}+\frac{1}{\sqrt{2N}}B^{(h)}_t,
$$
\end{enumerate}
\end{definition}

\begin{definition}We refer to the following stochastic differential equations 
as the Dyson Brownian motion for (\ref{eqn:eigenvaluesSymmetric}) and (\ref{eqn:eigenvaluesHermitian}) 
and
the Dyson vector flow for (\ref{eqn:eigenvectorsSymmetric}) and (\ref{eqn:eigenvectorsHermitian}).
\begin{enumerate}[(i)]
\item Let  $\bla_0\in\Sigma_N=\{\lambda_1<\dots<\lambda_N\}$, $\boldu_0\in \OO(N)$, and $B^{(s)}$
be as in Definition \ref{def:OU}. The symmetric Dyson Brownian motion/vector flow  with initial condition  $(\lambda_1,\dots,\lambda_N)=\bla_0$, $(u_1,\dots,u_N)=\boldu_0$,  is
\begin{align}
\rd\la_k&=\frac{\rd B^{(s)}_{kk}}{\sqrt{N}}+\left(\frac{1}{N}\sum_{\ell\neq k}\frac{1}{\la_k-\la_\ell}\right)\rd t\label{eqn:eigenvaluesSymmetric},\\
\rd u_k&=\frac{1}{\sqrt{N}}\sum_{\ell\neq k}\frac{\rd B^{(s)}_{k\ell}}{\lambda_k-\lambda_\ell}u_\ell
-\frac{1}{2N}\sum_{\ell\neq k}\frac{\rd t}{(\la_k-\la_\ell)^2}u_k\label{eqn:eigenvectorsSymmetric}.
\end{align}
\item Let  $\bla_0\in\Sigma_N$, $\boldu_0\in \UU(N)$, and $B^{(h)}$
be as in Definition \ref{def:OU}. The Hermitian Dyson Brownian motion/vector flow with initial condition  $(\lambda_1,\dots,\lambda_N)=\bla_0$, $(u_1,\dots,u_N)=\boldu_0$,  is
\begin{align}
\rd\la_k&=\frac{\rd B^{(h)}_{kk}}{\sqrt{2N}}+\left(\frac{1}{N}\sum_{\ell\neq k}\frac{1}{\la_k-\la_\ell}\right)\rd t\label{eqn:eigenvaluesHermitian},\\
\rd u_k&=\frac{1}{\sqrt{2N}}\sum_{\ell\neq k}\frac{\rd B^{(h)}_{k\ell}}{\lambda_k-\lambda_\ell}u_\ell
-\frac{1}{2N}\sum_{\ell\neq k}\frac{\rd t}{(\la_k-\la_\ell)^2}u_k\label{eqn:eigenvectorsHermitian}.
\end{align}
\end{enumerate}
\end{definition}

The  theorem below  contains the following results. (a) The above stochastic differential equations admit a unique strong solution, this relies on classical techniques and an argument originally by McKean \cite{McK1969}. (b) The matrix Dyson Brownian motion induces the standard    Dyson Brownian motion (for the eigenvalues)  and  Dyson
eigenvector flow.
This statement was already proved in \cite{AndGuiZei2010}.  (c) For calculation purpose, one can condition on the trajectory of the eigenvalues to study the eigenvectors evolution. For the sake of completeness, this theorem is proved in the appendix.

With a slight abuse of notation, we will write $\bla_t$
either for $(\lambda_1(t),\dots,\lambda_N(t))$ or for the $N\times N$ diagonal matrix with entries 
$\lambda_1(t),\dots,\lambda_N(t)$.

\begin{theorem}\label{thm:PCE}
The following statements about the  Dyson Brownian motion and eigenvalue/vector flow hold.
\begin{enumerate}[(a)]
\item Existence and strong uniqueness hold for the  system of stochastic differential equations (\ref{eqn:eigenvaluesSymmetric}), (\ref{eqn:eigenvectorsSymmetric}).
Let $(\bla_t,\boldu_t)_{t\geq 0}$  be the solution. Almost surely, for any $t\geq 0$ we have $\bla_t\in\Sigma_N$ and $\boldu_t\in\OO(N)$.
\item Let $(H_t)_{t\geq 0}$ be
a symmetric Dyson Brownian motion with initial condition $H_0=\boldu_0\bla_0\boldu_0^*$, $\bla_0\in\Sigma_N$.
Then the processes $(H_t)_{t\geq 0}$ and $(\boldu_t\bla_t\boldu_t^*)_{t\geq 0}$ have the same distribution.
\item
Existence and strong uniqueness hold for (\ref{eqn:eigenvaluesSymmetric}). For any $T>0$, let $\nu_T^{H_0}$ be the distribution of $(\bla_t)_{0\leq t\leq T}$ with initial value the spectrum of a matrix $H_0$.
For $0\leq T\leq T_0$ and any given continuous trajectory $\bla=(\bla_t)_{0\leq t\leq T_0}\subset\Sigma_N$, existence and strong uniqueness holds for
(\ref{eqn:eigenvectorsSymmetric}) on $[0,T]$. Let $\mu^{H_0, \bla}_T$ be the distribution of
$(\boldu_t)_{0\leq t\leq T}$ with the initial  matrix $H_0$ and the path $\bla$  given. 

Let $F$ be continuous bounded, from the set of continuous paths (on $[0,T]$) on $N\times N$ symmetric matrices to $\RR$. 
Then for any initial matrix $H_0$ we have
\begin{equation}\label{eqn:dual}
\E^{H_0}(F((H_t)_{0\leq t\leq T}))=\int \rd \nu_{T}^{H_0}(\bla)  \int\rd\mu_T^{H_0, \bla}(\boldu)F((\boldu_t\bla_t\boldu_t^*)_{0\leq t\leq T}).
\end{equation}
\end{enumerate}
The  analogous statements hold in the Hermitian setting.
\end{theorem}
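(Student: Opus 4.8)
I would prove the three assertions by decoupling the autonomous eigenvalue dynamics from the eigenvector dynamics, then running an It\^o computation on the spectral decomposition of the matrix flow, and finally disintegrating the joint law of eigenvalues and eigenvectors. I will work in the symmetric case; the Hermitian case is handled identically after replacing $\OO(N)$ by $\UU(N)$ and adjusting the combinatorial constants to their $\beta=2$ values.

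\emph{Part (a): well-posedness.} First I would treat the eigenvalue system \eqref{eqn:eigenvaluesSymmetric} on its own. Its coefficients are smooth on the open region $\Sigma_N$, so Picard iteration gives a unique strong solution up to the collision time $\tau=\inf\{t\ge0:\la_i(t)=\la_{i+1}(t)\text{ for some }i\}$. To show $\tau=\infty$ almost surely I would follow McKean's argument \cite{McK1969}: apply It\^o to a Lyapunov functional such as $\Phi(\bla)=-\sum_{i<j}\log(\la_j-\la_i)$ and check that its drift is bounded above by $C(1+\Phi)$, so $\Phi(\bla_t)$ cannot blow up in finite time --- the repulsion in the drift of \eqref{eqn:eigenvaluesSymmetric}, present because $\beta\ge1$, is exactly what gives the dangerous terms a favorable sign. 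Next, given a continuous path $\bla\in C([0,T],\Sigma_N)$, equation \eqref{eqn:eigenvectorsSymmetric} is \emph{linear} in $\boldu$ with coefficients that are continuous, hence bounded, on $[0,T]$, so standard linear-SDE theory supplies a unique strong solution with no explosion. Applying It\^o to $\langle u_k(t),u_\ell(t)\rangle$ shows its martingale part and drift cancel identically, so $\boldu_t^{*}\boldu_t\equiv\Id$ and $\boldu_t\in\OO(N)$ for all $t$. Strong uniqueness of the coupled system then follows: two solutions with the same driving Brownian motion have the same $\bla$ by uniqueness for \eqref{eqn:eigenvaluesSymmetric}, hence the same $\boldu$ by uniqueness for the linear equation.

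\emph{Part (b): the matrix flow induces the eigenvalue/vector flow.} Let $H_t=H_0+N^{-1/2}B_t^{(s)}$ and let $\sigma$ be the first time the spectrum of $H_t$ fails to be simple. On $[0,\sigma)$ the map $H\mapsto(\bla,\boldu)$ is smooth, so It\^o's formula applies; writing $\rd\widetilde B_{k\ell}\deq\langle u_k,(\rd B_t^{(s)})u_\ell\rangle$, first- and second-order perturbation theory give
\[
\rd\la_k=\tfrac1{\sqrt N}\rd\widetilde B_{kk}+(\text{second order}),\qquad \rd u_k=\tfrac1{\sqrt N}\sum_{\ell\neq k}\frac{\rd\widetilde B_{k\ell}}{\la_k-\la_\ell}\,u_\ell+(\text{It\^o correction})\,u_k+\cdots.
\]
By orthogonal invariance of the GOE increment $\rd B_t^{(s)}$, the array $(\widetilde B_{k\ell})_{k\le\ell}$ is again a symmetric family of independent Brownian motions, with variance $2\,\rd t$ on the diagonal; this is justified by L\'evy's characterization, since each $\widetilde B_{k\ell}$ is a continuous local martingale with bracket $\rd\langle\widetilde B_{k\ell},\widetilde B_{k'\ell'}\rangle=(\delta_{kk'}\delta_{\ell\ell'}+\delta_{k\ell'}\delta_{\ell k'})\,\rd t$. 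That bracket produces precisely the drifts $\tfrac1N\sum_{\ell\neq k}(\la_k-\la_\ell)^{-1}\rd t$ for $\la_k$ and $-\tfrac1{2N}\sum_{\ell\neq k}(\la_k-\la_\ell)^{-2}\rd t\,u_k$ for $u_k$, so $(\bla_t,\boldu_t)$ solves \eqref{eqn:eigenvaluesSymmetric}--\eqref{eqn:eigenvectorsSymmetric} on $[0,\sigma)$. But the eigenvalue equation alone forbids collisions by the Lyapunov bound above, so $\sigma=\infty$ a.s.; strong uniqueness from Part (a) then identifies the law of $(\bla_t,\boldu_t)$ with that of the SDE solution started from the spectral decomposition of $H_0$.

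\emph{Part (c) and the main obstacle.} Equation \eqref{eqn:eigenvaluesSymmetric} is autonomous and driven only by the diagonal Brownian motions $(B^{(s)}_{kk})_k$, so by strong uniqueness its solution $\bla$ is measurable with respect to $\sigma\big((B^{(s)}_{kk})_k,\bla_0\big)$, which is independent of the off-diagonal family $(B^{(s)}_{k\ell})_{k<\ell}$. Hence, conditionally on the path $\bla$, that off-diagonal family is still a collection of independent standard Brownian motions, and \eqref{eqn:eigenvectorsSymmetric} --- which uses only these --- is a well-posed linear SDE whose coefficients are deterministic functions of the given $\bla$; its law is by definition $\mu_T^{H_0,\bla}$. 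Identity \eqref{eqn:dual} then drops out by disintegrating the joint law of $(\bla,\boldu)$ over $\bla$ and using Part (b) to replace $F((H_t))$ by $F((\boldu_t\bla_t\boldu_t^{*}))$ in law. I expect the genuinely delicate points to be (i) the non-collision statement $\tau=\infty$, i.e.\ the a.s.\ time-integrability of the singular coefficients $(\la_k-\la_\ell)^{-1}$ and $(\la_k-\la_\ell)^{-2}$, which is the classical $\beta\ge1$ computation and cannot be bypassed, and (ii) the perturbation-theory It\^o calculation in Part (b): checking the spectral decomposition is smooth enough on the simple-spectrum locus, getting the second-order terms right, and verifying that $(\widetilde B_{k\ell})$ is again a clean Brownian array. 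The remaining ingredients are routine linear-SDE theory and a disintegration of measures.
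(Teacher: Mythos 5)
Your proposal is correct and, for parts (a) and (c), follows essentially the same route as the paper's Appendix B.1: regularize/localize the singular coefficients, reduce non-collision to the classical $\beta\geq 1$ argument (you invoke McKean's Lyapunov functional $-\sum_{i<j}\log(\la_j-\la_i)$, the paper cites Proposition 1 of \cite{RogShi1993} for the same fact), derive a closed SDE system for the Gram matrix $\boldu_t^*\boldu_t$ and identify the constant solution by strong uniqueness, and obtain \eqref{eqn:dual} by observing that \eqref{eqn:eigenvaluesSymmetric} is driven only by the diagonal Brownian motions, so that conditioning on $\bla$ leaves the off-diagonal driving noise of \eqref{eqn:eigenvectorsSymmetric} intact. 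The genuine difference is in part (b), where you argue in the opposite direction from the paper. You start from the matrix flow $H_t=H_0+N^{-1/2}B^{(s)}_t$, apply stochastic perturbation theory to the spectral decomposition on the simple-spectrum locus, verify via L\'evy's characterization that $\widetilde B_{k\ell}=\langle u_k,(\rd B^{(s)})u_\ell\rangle$ is again a Brownian array, and then invoke strong uniqueness from (a); this is the classical derivation in the spirit of \cite{AndGuiZei2010}, and its cost is that you must justify differentiability of a measurable eigenvector selection and rule out spectral degeneracy of $H_t$ by a stopping-time argument (your appeal to the Lyapunov bound for this is fine but needs the small extra step of transferring non-collision from the SDE solution back to the eigenvalues of $H_t$ before $\sigma$). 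The paper instead constructs $\tilde H_t=\boldu_t\bla_t\boldu_t^*$ from the SDE solution, computes $\rd\tilde H_t=N^{-1/2}\boldu_t(\rd B^{(s)})\boldu_t^*$ by It\^o, uses L\'evy's criterion plus $\boldu_t\in\OO(N)$ to recognize a matrix Dyson Brownian motion, and concludes by weak uniqueness of the (additive-noise, hence trivial) matrix SDE; this direction entirely avoids perturbation theory and the eigenvector-selection issue. Both arguments are sound; the paper's is shorter given that (a) is already established, while yours is the more standard textbook derivation and makes the origin of the drift terms more transparent.
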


We will omit the subscript $T$ when it is obvious.
The previous theorem reduces the study of the eigenvector dynamics to the stochastic differential equations (\ref{eqn:eigenvectorsSymmetric}) and (\ref{eqn:eigenvectorsHermitian}).
The following lemma gives a concise form of the generators of these diffusions. It is very similar 
to the well-known forms of the generator for the Brownian motion on the unitary/orthogonal groups up to the following difference: weights vary depending on eigenvalue pairs.

We will need the following notations (the dependence in $t$ will often be omitted for $c_{k\ell}$, $1\leq k<\ell\leq N$):
\begin{align}
c_{k\ell}(t)&=\frac{1}{N(\la_k(t)-\la_\ell(t))^2},\label{eqn:cij}\\
u_k\partial_{u_\ell}&=\sum_{\al=1}^Nu_k(\al)\partial_{u_\ell(\al)},\notag\ u_k\partial_{\overline{u}_\ell}=\sum_{\al=1}^Nu_k(\al)\partial_{\overline{u}_\ell(\al)},\notag\\
X_{k\ell}^{(s)}&=u_k\partial_{u_\ell}-u_\ell\partial_{u_k},\label{eqn:Xkl}\\
X_{k\ell}^{(h)}&=u_k\partial_{u_\ell}-\overline{u}_\ell\partial_{\overline{u}_k},\notag\
\overline{X}_{k\ell}^{(h)}=\overline{u}_k\partial_{\overline{u}_\ell}-u_\ell\partial_{u_k}.\notag
\end{align}
Here  $\partial_{\overline{u}_\ell}$ and $\partial_{\overline{u}_\ell}$ are defined by considering 
$u_\ell$ as a complex number, i.e., if we write $u_\ell= x+ iy$ then 
$\partial_{\overline{u}_\ell}= \frac 1 2 \partial_x + \frac  i 2 \partial_y $.

\begin{lemma}\label{lem:generator}
For the diffusion (\ref{eqn:eigenvectorsSymmetric}) (resp. (\ref{eqn:eigenvectorsHermitian})),
the generators acting on smooth functions $f((u_i(\al))_{1\leq i,\al\leq N}):\RR^{N^2}\to\RR$ (resp. $\mathbb{C}^{N^2}\to\RR$) are respectively
\begin{align}
&\LL_t^{(s)}=\sum_{1\leq k<\ell\leq N}c_{k\ell}(t)(X_{k\ell}^{(s)})^2, \notag\\
&\LL_t^{(h)}=\frac{1}{2}\sum_{1\leq k<\ell\leq N}c_{k\ell}(t)\left(X_{k\ell}^{(h)}\overline{X}_{k\ell}^{(h)}+\overline{X}_{k\ell}^{(h)}X_{k\ell}^{(h)}\right).  \label{LH}
\end{align}
\end{lemma}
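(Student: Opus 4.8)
The plan is to recognize the stochastic differential equations (\ref{eqn:eigenvectorsSymmetric}) and (\ref{eqn:eigenvectorsHermitian}) as weight-modulated Brownian motions on the compact groups $\OO(N)$ and $\UU(N)$, and to identify their generators by collecting the drift term in It\^o's formula. By part (c) of Theorem \ref{thm:PCE} we may condition on the eigenvalue trajectory $\bla$; on any time interval on which $\bla_t\in\Sigma_N$, (\ref{eqn:eigenvectorsSymmetric}) is then a time-inhomogeneous It\^o diffusion with smooth bounded coefficients, so its generator applied to a smooth function $f$ of the entries $u_i(\al)$ is the $\rd t$-coefficient of $\rd f$.

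For the symmetric equation I would first compute the quadratic covariations. Since the $B^{(s)}_{k\ell}$, $k<\ell$, are independent standard Brownian motions and $B^{(s)}_{k\ell}=B^{(s)}_{\ell k}$ --- so that the $k$-th and $\ell$-th columns of $\boldu$ are driven by the common process $B^{(s)}_{k\ell}$ --- one gets
$$\frac{\rd\langle u_k(\al),u_\ell(\beta)\rangle}{\rd t}=\frac{\delta_{k\ell}}{N}\sum_{m\ne k}\frac{u_m(\al)\,u_m(\beta)}{(\la_k-\la_m)^2}\ -\ \frac{1-\delta_{k\ell}}{N}\cdot\frac{u_\ell(\al)\,u_k(\beta)}{(\la_k-\la_\ell)^2},$$
the sign in the off-diagonal case coming from $(\la_k-\la_\ell)(\la_\ell-\la_k)=-(\la_k-\la_\ell)^2$. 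Feeding this and the drift $-\frac1{2N}\sum_{\ell\ne k}(\la_k-\la_\ell)^{-2}u_k$ of (\ref{eqn:eigenvectorsSymmetric}) into It\^o's formula yields the generator as a first-order plus a second-order operator, which one then identifies with $\sum_{k<\ell}c_{k\ell}(X^{(s)}_{k\ell})^2$ by expanding
$$(X^{(s)}_{k\ell})^2=(u_k\partial_{u_\ell})^2+(u_\ell\partial_{u_k})^2-(u_k\partial_{u_\ell})(u_\ell\partial_{u_k})-(u_\ell\partial_{u_k})(u_k\partial_{u_\ell}):$$
the two ``square'' terms account for the diagonal ($k=\ell$) covariances, the two cross terms for the off-diagonal covariances together with a first-order remainder $-(u_k\partial_{u_k}+u_\ell\partial_{u_\ell})$ whose sum over $k<\ell$ matches the drift term of (\ref{eqn:eigenvectorsSymmetric}). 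Conceptually this is the statement that, rewritten in Stratonovich form, (\ref{eqn:eigenvectorsSymmetric}) becomes the drift-free equation $\rd\boldu=\sum_{k<\ell}\sqrt{c_{k\ell}(t)}\,X^{(s)}_{k\ell}(\boldu)\circ\rd\beta_{k\ell}$, a Brownian motion on $\OO(N)$ whose generator is the weighted sum of squares of the $X^{(s)}_{k\ell}$.

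The Hermitian case runs the same way on $\UU(N)$, using the Wirtinger derivatives of the statement (with $\partial_{\overline{u}_\ell}=\tfrac12\partial_x+\tfrac{\ii}{2}\partial_y$). Splitting $B^{(h)}_{k\ell}$ into its independent standard real and imaginary parts and using $B^{(h)}_{\ell k}=\overline{B^{(h)}_{k\ell}}$, the pair $(k,\ell)$ is driven, each at rate $\tfrac12 c_{k\ell}(t)$, by the two real vector fields $V^{P}_{k\ell}=-(X^{(h)}_{k\ell}+\overline{X}^{(h)}_{k\ell})$ and $V^{Q}_{k\ell}=\ii(X^{(h)}_{k\ell}-\overline{X}^{(h)}_{k\ell})$, and the It\^o correction of the corresponding drift-free Stratonovich equation again reproduces the drift of (\ref{eqn:eigenvectorsHermitian}). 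Hence the generator is $\tfrac12\sum_{k<\ell}\tfrac12 c_{k\ell}\big((V^{P}_{k\ell})^2+(V^{Q}_{k\ell})^2\big)$, and the elementary identity $(A+B)^2-(A-B)^2=2(AB+BA)$ gives $(V^{P}_{k\ell})^2+(V^{Q}_{k\ell})^2=2\big(X^{(h)}_{k\ell}\overline{X}^{(h)}_{k\ell}+\overline{X}^{(h)}_{k\ell}X^{(h)}_{k\ell}\big)$, which is the operator in (\ref{LH}).

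The computations are elementary; the real work is the bookkeeping. One must keep straight which columns of $\boldu$ share a driving Brownian motion (a consequence of $B_{k\ell}=B_{\ell k}$), carry the signs correctly through the eigenvalue differences, and --- the point of the lemma --- observe that each operator $(X^{(s)}_{k\ell})^2$ (resp.\ $X^{(h)}_{k\ell}\overline{X}^{(h)}_{k\ell}+\overline{X}^{(h)}_{k\ell}X^{(h)}_{k\ell}$) is not purely second order, its first-order part being exactly what the It\^o drift $-\frac1{2N}\sum_{\ell\ne k}(\la_k-\la_\ell)^{-2}u_k$ in (\ref{eqn:eigenvectorsSymmetric})--(\ref{eqn:eigenvectorsHermitian}) supplies. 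In the Hermitian case there is the extra subtlety that $X^{(h)}_{k\ell}$ and $\overline{X}^{(h)}_{k\ell}$ do not commute, so the symmetrization in $\LL^{(h)}_t$ is essential --- it encodes the symmetric contributions of the real and imaginary parts of $B^{(h)}_{k\ell}$ --- and the Wirtinger calculus must be handled consistently throughout.
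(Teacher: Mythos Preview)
Your approach is correct and is essentially the same as the paper's: apply It\^o's formula, compute the quadratic covariations, and match the resulting first- and second-order pieces against the expansion of the claimed operators. The paper carries this out explicitly only in the Hermitian case (Appendix~B.2), working in real and imaginary coordinates $u_k(\alpha)=x_{k\alpha}+\ii y_{k\alpha}$, splitting the generator into a drift piece~(I) and two second-order pieces~(II),~(III), and then converting back to Wirtinger derivatives. Your organization is slightly different---you compute the covariations of the $u_k(\alpha)$ directly and, in the Hermitian case, identify the two real vector fields $V^P_{k\ell},V^Q_{k\ell}$ driving each pair---but the underlying computation is the same bookkeeping. The Stratonovich observation (that the It\^o drift is precisely the Stratonovich correction, so the equation is a drift-free sum of rotation vector fields on the group) is a pleasant conceptual addition that explains \emph{why} the generator is a sum of squares, and the identity $(A+B)^2-(A-B)^2=2(AB+BA)$ is a clean shortcut for the symmetrization that the paper obtains by expanding (I)+(II)+(III) term by term.
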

\noindent The above lemma means
$\rd\E( g (\boldu_t))/\rd t=\E(\LL_t^{(s)}g (\boldu_t))$ (resp.
$\rd\E( g (\boldu_t))/\rd t=\E(\LL_t^{(h)} g(\boldu_t))$)
for the stochastic differential equations (\ref{eqn:eigenvectorsSymmetric}) (resp. (\ref{eqn:eigenvectorsHermitian})). It relies on a direct calculation via It{\^o}'s formula. The details are given in the appendix.

\section{Eigenvector Moment Flow}

\subsection{The moment flow.\ }  Our observables will be moments of projections of the eigenvectors  onto  a  given direction. More precisely,
for any  fixed $\bq \in\mathbb{R}^N$ and  for any $1\leq k\leq N$, define \nc
$$
z_k(t)=\sqrt{N}\langle\bq, u_k(t)\rangle=\sum_{\alpha=1}^N \bq(\alpha) u_k(t, \alpha).
$$
With this $\sqrt{N}$ normalization, the typical size of $z_k$ is of order $1$.
We assume that the eigenvalue trajectory $(\lambda_{k}(t),0\leq t\leq T_0)_{k=1}^N$  in the simplex $\Sigma^{(N)}$  is given. Furthermore,   $\boldu$ is the unique strong solution of the stochastic differential equation (\ref{eqn:eigenvectorsSymmetric}) (resp. (\ref{eqn:eigenvectorsHermitian})) with the given eigenvalue trajectory. \nc
Let $\PP^{(s)}(t)  =\PP^{(s)}(z_1,\dots,z_N)(t) $ and $\PP^{(h)}=\PP^{(h)}(z_1,\dots,z_N)(t)$ 
be smooth functions. 
Then a simple calculation yields
\begin{align}
&X^{(s)}_{k\ell}\PP^{(s)}=(z_k\partial_{z_\ell}-z_\ell\partial_{z_k})\PP^{(s)},\label{eqn:genSym}\\
&X_{k\ell}^{(h)}\PP^{(h)}=(z_k\partial_{z_\ell}-\overline{z}_\ell\partial_{\overline{z}_k})f,\
\overline{X}^{(h)}_{k\ell}\PP^{(h)}=(\overline{z}_k\partial_{\overline{z}_\ell}-z_\ell\partial_{z_k})\PP^{(h)}.\label{eqn:genHer}
\end{align}
For $m\in\llbracket 1,N\rrbracket$, denote by $j_1,\dots,j_m$
positive integers and let  $i_1,\dots,i_m$ in $\llbracket 1,N\rrbracket$ be $m$ distinct indices. The test functions we will consider  are:
\begin{align*}
&{\PP^{(s)}}_{i_1,\dots,i_m}^{j_1,\dots,j_m}(z_1,\dots,z_N)=\prod_{\ell=1}^mz_{i_\ell}^{2j_\ell},\\
&{\PP^{(h)}}_{i_1,\dots,i_m}^{j_1,\dots,j_m}(z_1,\dots,z_N)=\prod_{\ell=1}^mz_{i_\ell}^{j_\ell}\overline{z}_{i_\ell}^{j_\ell}.
\end{align*}
For any $m$ fixed, linear combinations of such polynomial functions are stable under the action of the generator. 
More precisely, the following  formulas hold.

\begin{enumerate}[(i)]
\item In the symmetric setting, one can use (\ref{eqn:genSym}) to evaluate the action of the generator. If 
neither $k$ nor $\ell$ are in $\{i_1,\dots,i_m\}$, then $(X_{k\ell}^{(s)})^2 {\PP^{(s)}}_{i_1,\dots,i_m}^{j_1,\dots,j_m}=0$;   the other cases are covered by:
\begin{align*}
(X_{i_1\ell}^{(s)})^2 {\PP^{(s)}}_{i_1,\dots,i_m}^{j_1,\dots,j_m}&=
2j_1(2j_1-1){\PP^{(s)}}_{\ell,i_1,\dots,i_m}^{1,j_1-1,\dots,j_m}-2j_1{\PP^{(s)}}_{i_1,\dots,i_m}^{j_1,\dots,j_m}\ \  \mathrm{when} \ \ \ell\not\in\{i_1,\dots,i_m\},\\
(X_{i_1i_2}^{(s)})^2 {\PP^{(s)}}_{i_1,\dots,i_m}^{j_1,\dots,j_m}&=
2j_1(2j_1-1){\PP^{(s)}}_{i_1,\dots,i_m}^{j_1-1,j_2+1,\dots,j_m}
+
2j_2(2j_2-1){\PP^{(s)}}_{i_1,\dots,i_m}^{j_1+1,j_2-1,\dots,j_m}\\
&\ \ \ -(2j_1(2j_2+1)+2j_2(2j_1+1)){\PP^{(s)}}_{i_1,i_2,\dots,i_m}^{j_1,j_2,\dots,j_m}.
\end{align*}
\item In the Hermitian setting, we note that the polynomials $\PP^{(h)}$ are invariant under 
the  permutation  $z_i \to \overline{z}_i$.   Thus the action of the generator $\LL^{(h)}$ \eqref{LH} on such functions $\PP^{(h)}$
 simplifies to
$$
\LL^{(h)}_t\PP^{(h)}=\sum_{k<\ell}c_{k\ell}X_{k\ell}^{(h)}\overline{X}_{k\ell}^{(h)}\PP^{(h)}.
$$
Then (\ref{eqn:genHer}) yields
\begin{align*}
X_{i_1\ell}^{(h)}\overline{X}_{k\ell}^{(h)} {\PP^{(h)}}_{i_1,\dots,i_m}^{j_1,\dots,j_m}&=
j_1^2{\PP^{(h)}}_{\ell,i_1,\dots,i_m}^{1,j_1-1,\dots,j_m}-j_1{\PP^{(h)}}_{i_1,\dots,i_m}^{j_1,\dots,j_m} \ \ \mathrm{when} \ \ \ell\not\in\{i_1,\dots,i_m\},\\
X_{i_1i_2\ell}^{(h)}\overline{X}_{k\ell}^{(h)} {\PP^{(h)}}_{i_1,\dots,i_m}^{j_1,\dots,j_m}&=
j_1^2{\PP^{(h)}}_{i_1,\dots,i_m}^{j_1-1,j_2+1,\dots,j_m}
+
j_2^2{\PP^{(h)}}_{i_1,\dots,i_m}^{j_1+1,j_2-1,\dots,j_m}\\
&-(j_1(j_2+1)+j_2(j_1+1)){\PP^{(h)}}_{i_1,i_2,\dots,i_m}^{j_1,j_2,\dots,j_m}
\end{align*}
\end{enumerate}
We now
normalize the polynomials by  defining
\begin{align}
&{\QQ^{ (s)}_{ t}}_{ i_1,\dots,i_m}^{j_1,\dots,j_m}= {\PP^{(s)}}_{i_1,\dots,i_m}^{j_1,\dots,j_m}  (t) \label{eqn:rescaleSym}
\prod_{\ell=1}^m a(2j_\ell)^{-1}\ \mbox{where}\ a(n)=\prod_{k\leq n,k\ {\rm odd}}k,\\
&{\QQ^{(h)}_{ t}}_{ i_1,\dots,i_m}^{j_1,\dots,j_m}= {\PP^{(h)}}_{i_1,\dots,i_m}^{j_1,\dots,j_m}(t) \label{eqn:rescaleHer}
\prod_{\ell=1}^m (2^{j_\ell}j_\ell!)^{-1}.
\end{align}
Note that $a(2n)=\E(\mathscr{N}^{2n})$ and $2^n n!=\E(|\mathscr{N}_1+\ii\mathscr{N}_2|^{2n})$, with 
$\mathscr{N}$, $\mathscr{N}_1$, $\mathscr{N}_2$ independent standard Gaussian random variables.
The above discussion implies the following evolution of  $\QQ^{(s)}$ (resp. $\QQ^{(h)}$)
along the Dyson eigenvector flow (\ref{eqn:eigenvectorsSymmetric}) (resp. (\ref{eqn:eigenvectorsHermitian})).

\begin{enumerate}[(i)]
\item Symmetric case: $  \LL^{(s)}_t{\QQ^{  (s)}_{  t}}=\sum_{k<\ell}c_{k\ell}(X_{k\ell}^{(s)})^2{\QQ^{  (s)}_{ t}}$ where 
\begin{align*}
(X_{i_1\ell}^{(s)})^2 {\QQ^{  (s)}_{ t}}_{i_1,\dots,i_m}^{j_1,\dots,j_m}&=
2j_1{\QQ^{  (s)}_{ t}}_{\ell,i_1,\dots,i_m}^{1,j_1-1,\dots,j_m}-2j_1{\QQ^{  (s)}_{ t}}_{i_1,\dots,i_m}^{j_1,\dots,j_m}\ \  \mathrm{when} \ \ \ell\not\in\{i_1,\dots,i_m\},\\
(X_{i_1i_2}^{(s)})^2 {\QQ^{  (s)}_{ t}}_{i_1,\dots,i_m}^{j_1,\dots,j_m}&=
2j_1(2j_2+1){\QQ^{  (s)}_{ t}}_{i_1,\dots,i_m}^{j_1-1,j_2+1,\dots,j_m}
+
2j_2(2j_1+1){\QQ^{  (s)}_{ t}}_{i_1,\dots,i_m}^{j_1+1,j_2-1,\dots,j_m}\\
&\ \ \ -(2j_1(2j_2+1)+2j_2(2j_1+1)){\QQ^{  (s)}_{ t}}_{i_1,i_2,\dots,i_m}^{j_1,j_2,\dots,j_m}.
\end{align*}
\item Hermitian case: $\LL^{(h)}_t{\QQ^{   (h)}_{  t}}=\sum_{k<\ell}c_{k\ell}X_{k\ell}^{(h)}\overline{X}_{k\ell}^{(h)}{\QQ^{   (h)}_{  t}}$ where 
\begin{align*}
X_{i_1\ell}^{(h)}\overline{X}_{i_1\ell}^{(h)} {\QQ^{   (h)}_{  t}}_{i_1,\dots,i_m}^{j_1,\dots,j_m}&=
j_1{\QQ^{   (h)}_{  t}}_{\ell,i_1,\dots,i_m}^{1,j_1-1,\dots,j_m}-j_1{\QQ^{   (h)}_{  t}}_{i_1,\dots,i_m}^{j_1,\dots,j_m}\ \  \mathrm{when} \ \ \ell\not\in\{i_1,\dots,i_m\},\\
X_{i_1i_2}^{(h)}\overline{X}_{i_1i_2}^{(h)}{\QQ^{   (h)}_{  t}}_{i_1,\dots,i_m}^{j_1,\dots,j_m}&=
j_1(j_2+1){\QQ^{   (h)}_{  t}}_{i_1,\dots,i_m}^{j_1-1,j_2+1,\dots,j_m}
+
j_2(j_1+1){\QQ^{   (h)}_{  t}}_{i_1,\dots,i_m}^{j_1+1,j_2-1,\dots,j_m}\\
&\ \ \ -(j_1(j_2+1)+j_2(j_1+1)){\QQ^{   (h)}_{  t}}_{i_1,i_2,\dots,i_m}^{j_1,j_2,\dots,j_m}.
\end{align*}
\end{enumerate}
Thanks to the scalings (\ref{eqn:rescaleSym}) and (\ref{eqn:rescaleHer}),  on   the right hand sides of the above four equations, the sums of the coefficients vanish.
  This allows us to interpret them as  multi-particle random walks (in  random environments) in the next subsection.

\subsection{Multi-particle random walk.\ }
Consider the following notation, $\boeta:  \llbracket 1,N\rrbracket \to \bN$ where
$\eta_j:=\boeta(j)$ is interpreted as the number of particles at the site $j$. Thus $\boeta$ denotes the configuration space 
of particles. We denote $\cN(\boeta) = \sum_j  \eta_j$.

Define $\boeta^{i, j}$ to be the configuration by moving one particle from $i$ to $j$. 
If there is no particle at $i$ then $\boeta^{i, j} = \boeta$. Notice that there is a direction and the particle is moved  from $i$ to $j$. 
Given $n>0$, there is a one to one correspondence between  (1) $\{(i_1,  j_1),  \dots, (i_m , j_m) \}$ with distinct $i_k$'s and positive $j_k$'s summing to $n$, and (2) $\boeta$ with $\cN(\boeta)=n$:  we map 
$ \{(i_1,  j_1),  \dots, (i_m , j_m ) \}$ to $\boeta$ with $\boeta_{i_k} = j_k$ and $\boeta_\ell = 0$ if $\ell \not \in \{ i_1, \ldots i_m \}$. 
We define 
\be\label{feq}
  f^{H_0, (s)}_{\bla, t} (\boeta)  
  =\E^{H_0} ( {\QQ^{(s)}_{ t}}_{i_1,\dots,i_m}^{j_1,\dots,j_m} (t)\mid \bla ), \quad f^{H_0, (h)}_{\bla, t} (\boeta)  
   = \E^{H_0}  ({\QQ^{(h)}_{ t}}_{i_1,\dots,i_m}^{j_1,\dots,j_m} \mid \bla ),  \\
\ee
if the configuration of $\boeta$ is the same as the one given by the $i, j$'s.
Here $\bla$ denotes the whole path of eigenvalues for $0\leq t\leq 1$.
 The dependence in the initial matrix $H_0$ will often be omitted so that we write  $f^{(s)}_{\bla, t}=f^{H_0, (s)}_{\bla, t}$,
 $f^{(h)}_{\bla, t}=f^{H_0, (h)}_{\bla, t}$. 
The following theorem summarizes the results from the previous subsection. It also defines the eigenvector moment flow, through the generators (\ref{momentFlotSym}) and (\ref{momentFlotHer}). They are multi-particles random walks (with $n=\mathcal{N}(\boeta)$ particles) in  random environments with jump rates depending on the eigenvalues.

\begin{theorem}[Eigenvector moment flow] Let $\bq\in\mathbb{R}^N$, $z_k=\sqrt{N}\langle\bq, u_k (t) \rangle$ and $c_{ij}(t)= \frac 1  { N(\lambda_i - \lambda_j)^2(t)}$.
\begin{enumerate}[(i)]
\item
Suppose that $\boldu$ is the solution to the symmetric Dyson vector flow (\ref{eqn:eigenvectorsSymmetric})
and $ f^{(s)}_{\bla, t} (\boeta)$ is given by \eqref{feq} where   
$\boeta$ denote  the  configuration  $ \{(i_1,  j_1),  \dots, (i_m , j_m ) \}$. \nc Then 
$f^{(s)}_{\bla, t}$
satisfies the equation  
\begin{align}
\label{ve}
&\partial_t f^{(s)}_{\bla, t} =  \mathscr{B}^{(s)}(t)  f^{(s)}_{\bla, t},\\
\label{momentFlotSym}&\mathscr{B}^{(s)}(t)  f(\boeta) = \sum_{i \neq j} c_{ij}(t) 2 \eta_i (1+ 2 \eta_j) \left(f(\boeta^{i, j})-f(\boeta)\right).
\end{align}
\item
Suppose that $\boldu$ is the solution to the Hermitian Dyson vector flow (\ref{eqn:eigenvectorsHermitian}),
and $ f^{(h)}_{\bla, t}$ is given by \eqref{feq}.
Then 
it satisfies the equation  
\begin{align}
\notag
&\partial_t  f^{(h)}_{\bla, t}  = \mathscr{B}^{(h)}(t) f^{(h)}_{\bla, t} ,\\
\label{momentFlotHer}&\mathscr{B}^{(h)}(t)  f(\boeta) = \sum_{i \neq j} c_{ij}(t) \eta_i (1+ \eta_j) \left( f(\boeta^{i, j})-f(\boeta)\right).
\end{align}
\end{enumerate}
\end{theorem}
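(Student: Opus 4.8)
The statement is essentially a repackaging of Lemma~\ref{lem:generator}, Theorem~\ref{thm:PCE} and the explicit generator computations carried out just above, so the plan is to assemble those ingredients and then perform the change of variables from the index data $\{(i_k,j_k)\}$ to the particle configuration $\boeta$. Throughout I would fix a continuous eigenvalue trajectory $\bla=(\bla_t)_{0\le t\le T_0}\subset\Sigma_N$ and work with the conditioned dynamics: by Theorem~\ref{thm:PCE}(c), $\boldu$ is the unique strong solution of (\ref{eqn:eigenvectorsSymmetric}) (resp. (\ref{eqn:eigenvectorsHermitian})) with this $\bla$ frozen, a time-inhomogeneous diffusion whose generator is $\LL^{(s)}_t$ (resp. $\LL^{(h)}_t$) of Lemma~\ref{lem:generator} with now deterministic coefficients $c_{k\ell}(t)$. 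The observation recorded after Lemma~\ref{lem:generator} (an Itô's-formula computation) gives, in conditioned form, the Kolmogorov equation $\tfrac{\dd}{\dd t}\E\big(g(\boldu_t)\mid\bla\big)=\E\big(\LL^{(s)}_t g(\boldu_t)\mid\bla\big)$ for smooth $g$, and similarly in the Hermitian case.

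Next I would take $g$ to be the normalized monomial ${\QQ^{(s)}_t}_{i_1,\dots,i_m}^{j_1,\dots,j_m}$ associated with $\boeta$, and insert the explicit action of $\LL^{(s)}_t=\sum_{k<\ell}c_{k\ell}(t)(X_{k\ell}^{(s)})^2$ computed above: $(X_{k\ell}^{(s)})^2$ sends a normalized monomial to a linear combination of normalized monomials of the same total degree $n=\cN(\boeta)$, with the coefficients displayed in case (i) of the previous subsection ($0$ when neither $k$ nor $\ell$ is occupied; $2j_1$ and $-2j_1$ when exactly one occupied index is present; and the three-term expression when two occupied indices are present — the formulas, stated for the labels $i_1,i_2$, extend to arbitrary occupied indices by the obvious relabeling symmetry of the monomials). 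Since all monomials in sight lie in the one fixed finite-dimensional space spanned by the degree-$n$ normalized monomials, taking conditional expectation term by term turns the Kolmogorov equation into a closed linear ODE system for $f^{(s)}_{\bla,t}$; well-posedness of the flow is therefore automatic, and $f^{(s)}_{\bla,t}$ as defined by (\ref{feq}) is visibly a solution.

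The last step is the translation to the configuration language via the bijection $\{(i_k,j_k)\}\leftrightarrow\boeta$ of the previous subsection ($\eta_{i_k}=j_k$, and $\eta_\ell=0$ otherwise). One checks that the monomials appearing on the right-hand sides of the formulas in (i) are precisely $f^{(s)}_{\bla,t}$ evaluated at the configurations obtained from $\boeta$ by moving one particle: ${\QQ^{(s)}_t}_{\ell,i_1,\dots}^{1,j_1-1,\dots}\leftrightarrow\boeta^{i_1,\ell}$ in the creation case $\eta_\ell=0$, and ${\QQ^{(s)}_t}_{i_1,\dots}^{j_1-1,j_2+1,\dots}\leftrightarrow\boeta^{i_1,i_2}$, ${\QQ^{(s)}_t}_{i_1,\dots}^{j_1+1,j_2-1,\dots}\leftrightarrow\boeta^{i_2,i_1}$ in the two-index case. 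Matching coefficients shows that the contribution of the ordered pair $(i,j)$ with $\eta_i\ge 1$ is exactly $c_{ij}(t)\,2\eta_i(1+2\eta_j)\big(f(\boeta^{i,j})-f(\boeta)\big)$ — the bare prefactor $2j_1$ in the creation case is recovered because $1+2\eta_\ell=1$ there, and the term from the reverse ordered pair vanishes since its source site is empty — so that summing the symmetric pairs $\{k,\ell\}$ over $k<\ell$ with weight $c_{k\ell}(t)$ reassembles the ordered double sum over $i\ne j$ in (\ref{momentFlotSym}). The Hermitian case is identical once one uses the reduction $\LL^{(h)}_t\QQ^{(h)}=\sum_{k<\ell}c_{k\ell}X_{k\ell}^{(h)}\overline{X}_{k\ell}^{(h)}\QQ^{(h)}$ and the coefficient formulas in (ii), which replace $2\eta_i(1+2\eta_j)$ by $\eta_i(1+\eta_j)$, giving (\ref{momentFlotHer}).

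The only genuine work is this bookkeeping — in particular keeping the particle-creation case (where $m$ changes) straight, and passing correctly between ordered and unordered index pairs and between occupied/unoccupied sites; that, rather than any analytic difficulty, is where I would expect to need care. Everything else is a direct consequence of results already in hand.
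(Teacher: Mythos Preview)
Your proposal is correct and follows the same approach as the paper: the theorem is stated there precisely as a summary of the generator computations in Section~3.1, and the paper's ``proof'' is just the sentence ``The following theorem summarizes the results from the previous subsection.'' Your write-up makes explicit the bookkeeping (conditioning via Theorem~\ref{thm:PCE}(c), applying the Kolmogorov equation from Lemma~\ref{lem:generator} to the normalized monomials $\QQ^{(s)}$, and matching the displayed action of $(X_{k\ell}^{(s)})^2$ to the configuration-space generator $\mathscr{B}^{(s)}$) that the paper leaves to the reader, and your coefficient-matching in the occupied/unoccupied and ordered/unordered cases is accurate.
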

\noindent An important property of the eigenvector moment flow is reversibility with respect to  a simple explicit equilibrium measure. In the Hermitian case, this is simply the uniform measure on the configuration space.

Recall  that a measure $\pi$ on the configuration space is said to be reversible with respect to a generator $\LL$ if $\sum_{\boeta}\pi(\boeta)g(\boeta)\LL f(\boeta)=\sum_{\boeta}\pi(\boeta)f(\boeta)\LL g(\boeta)$ for any functions $f$ and $g$. 
We then define the Dirichlet form by 
$$
D^{\pi}(f)=-\sum_{\boeta}\pi(\boeta)f(\boeta)\LL f(\boeta).
$$

\begin{proposition} \label{prop:rev} For the eigenvector moment flow,  the following properties hold.

\begin{enumerate}[(i)]
\item Define a  measure on the configuration space by assigning the  weight 
\be\label{eqn:weight}
\pi^{(s)}(\boeta) = \prod_{x=1}^N \phi(\eta_x), \ \phi(k) =\prod_{i=1}^k\left(1-\frac{1}{2i}\right).
\ee
Then $\pi^{(s)}$ is a reversible measure for $\mathscr{B}^{(s)}$ and the Dirichlet form is given by 
$$
D^{\pi^{(s)}}(f)=\sum_{\boeta}  \pi^{(s)}(\boeta) \sum_{i \neq j}  c_{ij}  \eta_i (1+ 2 \eta_j) \left(f(\boeta^{i, j}) - f(\boeta)\right)^2.
$$
\item The uniform measure ($\pi^{(h)}(\boeta)=1$ for all $\boeta$) is reversible with respect to $\mathscr{B}^{(h)}$. The associated Dirichlet form is
$$
D^{\pi^{(h)}}(f)=\frac{1}{2}\sum_{\boeta} \sum_{i \neq j}  c_{ij}  \eta_i (1+ \eta_j) \left(f(\boeta^{i, j}) - f(\boeta)\right)^2.
$$
\end{enumerate}
\end{proposition}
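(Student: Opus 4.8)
The plan is to establish \emph{detailed balance} — reversibility at the level of the jump rates — and then read off the Dirichlet forms by the standard symmetrization. Since the configuration space carries infinitely many configurations but each flow (\ref{momentFlotSym}), (\ref{momentFlotHer}) preserves the particle number $\cN(\boeta)$, and since the relevant observables (the $\QQ$'s) live on a single sector $\{\boeta:\cN(\boeta)=n\}$, I will work on such a sector throughout; it is a finite set, so all sums below are finite, and the ($t$-independent) weight $\pi^{(s)}$ (resp. $\pi^{(h)}$) has finite mass on it.

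First rewrite $\mathscr{B}^{(s)}f(\boeta)=\sum_{\boeta'}c(\boeta,\boeta')\,(f(\boeta')-f(\boeta))$, where the only possibly nonzero rates are $c(\boeta,\boeta^{i,j})=c_{ij}\,2\eta_i(1+2\eta_j)$ over ordered pairs $i\neq j$; when $\eta_i=0$ this coefficient vanishes and $\boeta^{i,j}=\boeta$, so those terms drop out. The reverse of the move $\boeta\mapsto\boeta^{i,j}$ is $\boeta^{i,j}\mapsto(\boeta^{i,j})^{j,i}=\boeta$, which carries rate $c_{ij}\,2(\eta_j+1)(2\eta_i-1)$, since $c_{ij}=c_{ji}$ and $\boeta^{i,j}$ has occupations $\eta_i-1$ at $i$ and $\eta_j+1$ at $j$. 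From (\ref{eqn:weight}), $\phi(0)=1$ and $\phi(k)/\phi(k-1)=1-\tfrac{1}{2k}=\tfrac{2k-1}{2k}$ for $k\ge1$, hence
\[
\frac{\pi^{(s)}(\boeta^{i,j})}{\pi^{(s)}(\boeta)}=\frac{\phi(\eta_i-1)}{\phi(\eta_i)}\cdot\frac{\phi(\eta_j+1)}{\phi(\eta_j)}=\frac{2\eta_i}{2\eta_i-1}\cdot\frac{2\eta_j+1}{2\eta_j+2},
\]
and multiplying by the backward combinatorial factor $2(\eta_j+1)(2\eta_i-1)$ gives exactly $2\eta_i(2\eta_j+1)$, i.e.\ the forward factor. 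This is the detailed balance relation $\pi^{(s)}(\boeta)\,c(\boeta,\boeta^{i,j})=\pi^{(s)}(\boeta^{i,j})\,c(\boeta^{i,j},\boeta)$. For the Hermitian flow the forward rate is $c_{ij}\eta_i(1+\eta_j)$ and the backward rate is $c_{ij}(\eta_j+1)\big((\eta_i-1)+1\big)=c_{ij}\eta_i(\eta_j+1)$; these agree, so detailed balance holds against the constant weight $\pi^{(h)}\equiv1$.

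Granting detailed balance, reversibility is immediate: $\sum_{\boeta}\pi(\boeta)g(\boeta)\,\mathscr{B}f(\boeta)=\sum_{\boeta,\boeta'}\pi(\boeta)\,c(\boeta,\boeta')\,g(\boeta)\big(f(\boeta')-f(\boeta)\big)$, and relabelling $\boeta\leftrightarrow\boeta'$ together with $\pi(\boeta)c(\boeta,\boeta')=\pi(\boeta')c(\boeta',\boeta)$ shows the right side is symmetric in $f$ and $g$. Taking $g=f$ and averaging the two relabellings gives
\[
-\sum_{\boeta}\pi(\boeta)f(\boeta)\,\mathscr{B}f(\boeta)=\frac12\sum_{\boeta,\boeta'}\pi(\boeta)\,c(\boeta,\boeta')\big(f(\boeta')-f(\boeta)\big)^2 ,
\]
and re-expanding $\sum_{\boeta'}$ as the sum over ordered pairs $i\neq j$ with $\boeta'=\boeta^{i,j}$ produces the claimed Dirichlet forms — the $\tfrac12$ cancelling the $2$ built into $\mathscr{B}^{(s)}$ in the symmetric case, and surviving in the Hermitian case. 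There is no real obstacle here; the computation is elementary, and the only point that demands care is the bookkeeping of the combinatorial weights under a move versus its reverse — matching $\eta_i(1+2\eta_j)$ against $(\eta_j+1)(2\eta_i-1)$ together with the compensating shifts $\phi(\eta_i)\to\phi(\eta_i-1)$ and $\phi(\eta_j)\to\phi(\eta_j+1)$ in the weight — plus the degenerate case $\eta_i=0$ and the convention $\phi(0)=1$.
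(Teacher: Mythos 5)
Your proof is correct and follows essentially the same route as the paper's: both hinge on the pointwise detailed balance relation $\pi(\boeta)c(\boeta,\boeta^{i,j})=\pi(\boeta^{i,j})c(\boeta^{i,j},\boeta)$ — the paper derives $\phi$ from this condition while you verify that the stated $\phi$ satisfies it, and both Dirichlet form computations are the standard symmetrization. Your explicit handling of the $\eta_i=0$ degeneracy and the restriction to a fixed particle-number sector are sensible refinements but do not change the argument.
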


\begin{proof} We first consider $(i)$, concerning  the symmetric eigenvector moment flow.
The measure $\pi^{(s)}$ is reversible for $\mathscr{B}^{(s)}$ for any choice of the coefficients
satisfying $c_{ij}=c_{ji}$ if and only if, for any $i<j$,
\begin{multline*}
\sum_{\boeta}\pi^{(s)}(\boeta)g(\boeta)\left(2\eta_i(1+2\eta_j)f(\boeta^{ij})+2\eta_j(1+2\eta_i)f(\boeta^{ji})\right)\\
=\sum_{\boeta}\pi^{(s)}(\boeta)f(\boeta)\left(2\eta_i(1+2\eta_j)g(\boeta^{ij})+2\eta_j(1+2\eta_i)g(\boeta^{ji})\right).
\end{multline*}
A sufficient condition is clearly that both of the following equations hold:
\begin{align*}
&\sum_{\boeta}\pi^{(s)}(\boeta)g(\boeta)2\eta_i(1+2\eta_j)f(\boeta^{ij})=\sum_{\boeta}\pi^{(s)}(\boeta)f(\boeta)2\eta_j(1+2\eta_i)g(\boeta^{ji}),\\
&\sum_{\boeta}\pi^{(s)}(\boeta)g(\boeta)2\eta_j(1+2\eta_i)f(\boeta^{ji})=\sum_{\boeta}\pi^{(s)}(\boeta)f(\boeta)2\eta_i(1+2\eta_j)g(\boeta^{ij}).
\end{align*}
Consider the left hand side of  
the first one of these two  equations. Let $\boxi=\boeta^{ij}$. If $ \xi_j>0$ then
$\boeta=\boxi^{ji}$, $\eta_i=\xi_i+1$ and $\eta_j=\xi_j-1$. For the right hand side of the second equation, we make the change of variables $\boxi=\boeta^{ji}$. 
Finally, rename all the variables on the right hand sides by $\xi$. Thus 
the above equations are equivalent to
\begin{align*}
&\sum_{\boxi}\pi^{(s)}(\boxi^{ji})g(\boxi^{ji})2(\xi_i+1)(2\xi_j-1)f(\boxi)=\sum_{\boxi}\pi^{(s)}(\boxi)f(\boxi)2\xi_j(1+2\xi_i)g(\boxi^{ji}),\\
&\sum_{\boxi}\pi^{(s)}(\boxi^{ij})g(\boxi^{ij})2(\xi_j+1)(2\xi_i-1)f(\boxi)=\sum_{\boxi}\pi^{(s)}(\boxi)f(\boxi)2\xi_i(1+2\xi_j)g(\boxi^{ij}).
\end{align*}
Clearly, both equations hold provided that 
\be\label{reverse}
\pi^{(s)}(\boxi^{ji})2(\boxi_i+1)(1+2(\boxi_j-1)) = \pi^{(s)}(\boxi)2\boxi_j(1+2\boxi_i).
\ee
If the measure is of type $\pi^{(s)}(\boeta)=\prod_x\phi(\eta_x)$ and we note $\xi_i=a, \xi_j=b$, this  equation is  equivalent to 
$$
\phi(a+1)\phi(b-1)2(a+1)(2b-1)=\phi(a)\phi(b)2b(2a+1),
$$
and the second equation yields  the same condition with the roles of $a$ and $b$ switched.  This holds for all $a$ and $b$ if $\phi(k+1)=((2k+1)/(2k+2))\phi(k)$, which gives (\ref{eqn:weight}) provided  we normalize  $\phi(0)=1$.
In the case $(ii)$, the same reasoning yields that $\phi$ is constant.

Finally, the Dirichlet form calculation is standard: for example, for $(i)$,
$\sum_{\boeta}\pi^{(s)}(\boeta)\mathscr{B}^{(s)}(f^2)(\boeta)=0$ by reversibility. Noting
$\mathscr{B}^{(s)}(f^2)(\boeta)=2f(\boeta)\mathscr{B}^{(s)}f(\boeta)-\sum\pi^{(s)}(\boeta)2\eta_i(1+2\eta_j)(f(\boeta)-f(\boeta^{ij}))^2$ allows to conclude.
\end{proof}

\section{Maximum principle}\label{sec:max}

From now on we only consider the symmetric ensemble. 
The Hermitian case can be treated with the same arguments and only notational changes. 
Given a typical path $\bla$,  we will prove in this section that the solution to the eigenvector moment flow (\ref{momentFlotSym})  converges 
uniformly to $1$ for $t=N^{-1/4+\e}$.
It is clear that the maximum (resp. minimum)  of $f$ over $\boeta$ decreases (resp. increases). We can quantify this decrease (resp. increase)  in terms of the maximum and minimum  themselves (see (\ref{eqn:StQUE})). This yields an
explicit convergence speed to 1 by a Gronwall argument.

\subsection{Isotropic local semicircle law.\ }

Fix a (small) $\omega > 0$ and define
\begin{equation}\label{eqn:omega}
\b S = \b S(\omega, N)=\hb{z=E+\ii\eta \in\C \col \abs{E}\leq \omega^{-1}\,,\, N^{-1 + \omega} \leq \eta \leq \omega^{-1}}.
\end{equation}
In the statement below, we will also need $m(z)$, the Stieltjes transform of the semicircular distribution, i.e. 
$$
m(z)=\int\frac{\varrho(s)}{s-z}\rd s=\frac{-z+\sqrt{z^2-4}}{2}, \quad \varrho(s)=\frac{1}{2\pi}\sqrt{(4-s^2)_+}, 
$$
where the square root is chosen so that $m$ is holomorphic in the upper half plane and $m(z)\to 0$ as $z\to\infty$.
The following  isotropic local semicircle law  (Theorem 4.2 in  \cite{BloErdKnoYauYin2013}) gives 
 very useful  bounds on $\langle\bq, u_k\rangle$ for any eigenvector $u_k$ via estimates on the associated Green function.

\begin{theorem}[Isotropic local semicircle law \cite{BloErdKnoYauYin2013}] \label{thm:ILSC}
 Let $H$ be an element from the generalized Wigner ensemble and $G(z)=(H-z)^{-1}$. Suppose that  \eqref{eqn:moments} holds.  Then
for any (small) $\xi>0$ and (large) $D>0$ we have, for large enough $N$,
\begin{equation} \label{ISSC_estimate}
\sup_{|\bq|=1,z\in\b S}\P\left(|\scalar{\bq}{G(z)\bq}-m(z)|>N^\xi \left(\sqrt{\frac{\im m(z)}{N\eta}}+\frac{1}{N\eta}\right)\right)\leq N^{-D}.
\end{equation}
\end{theorem}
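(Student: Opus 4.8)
The plan is to deduce the estimate from the \emph{entrywise} local semicircle law for generalized Wigner matrices together with a high-moment bound on the off-diagonal part of the quadratic form, which is the route taken for Theorem 4.2 of \cite{BloErdKnoYauYin2013}. Write $\eta=\im z$ and $\Psi(z)=\sqrt{\im m(z)/(N\eta)}+1/(N\eta)$. The input is that, for each fixed $z\in\b S$, with very high probability $\max_{i,j}|G_{ij}(z)-m(z)\delta_{ij}|\lesssim N^{\xi/2}\Psi(z)$; it is the normalization $\sum_i\sigma_{ij}^2=1$ from Definition \ref{def:wig} that makes $m$ the correct self-consistent density even though the variances vary.

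First I would fix a unit vector $\bq$ and a point $z\in\b S$ and split
\begin{equation}\label{eqn:isoSplit}
\scalar{\bq}{G(z)\bq}-m(z)=\sum_i|\bq(i)|^2\big(G_{ii}(z)-m(z)\big)+\sum_{i\neq j}\overline{\bq(i)}\,\bq(j)\,G_{ij}(z).
\end{equation}
The diagonal sum is bounded in absolute value by $\big(\max_i|G_{ii}-m|\big)\sum_i|\bq(i)|^2\lesssim N^{\xi/2}\Psi$ with very high probability, so the whole difficulty lies in the off-diagonal sum $B\deq\sum_{i\neq j}\overline{\bq(i)}\bq(j)G_{ij}$.

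For $B$ the plan is a moment estimate: show $\E|B|^{2p}\le(N^{C\xi}\Psi)^{2p}$ for $p$ up to a power of $\log N$, and then conclude $|B|\lesssim N^{C'\xi}\Psi$ with very high probability by Markov's inequality. The moment bound would be obtained by the iterated minor (resolvent) expansion: for $i\neq j$ write $G_{ij}=-G_{ii}\sum_{k\neq i}h_{ik}G^{(i)}_{kj}$, expand $G^{(i)}_{kj}$ again, and pass between $G$ and its minors via $G_{kj}=G^{(i)}_{kj}+G_{ki}G_{ij}/G_{ii}$. After expanding all $2p$ factors and taking the expectation, independence forces every light edge to be paired (configurations containing an unpaired $h_{ik}$ vanish), the surviving off-diagonal resolvent entries $G^{(\cdot)}_{kl}$ ($k\neq l$) are each of size $\Psi$ by the entrywise law applied to minors, and the diagonal factors and variance sums are $O(1)$; the one point beyond the scalar entrywise argument is that the \emph{two distinct} weights sitting at the endpoints of each factor $G_{ij}$ must be absorbed, which is done by Cauchy--Schwarz using $\|\bq\|_2=1$ and $\|\bq\|_\infty\le1$ so that no loss occurs. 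Organized as in the fluctuation-averaging estimates of Erd\H{o}s--Yau--Yin, this yields the bound, the factor $\Psi$ arising from the top-level centred quadratic form in the rows $h_{i\cdot}$ weighted by $G^{(i)}$ and $\bq$. Inserting this into \eqref{eqn:isoSplit} and choosing $p$ large enough in terms of $D$ gives \eqref{ISSC_estimate} for each fixed $\bq$ and $z\in\b S$, hence the stated bound on the supremum over $|\bq|=1,z\in\b S$ of the probabilities. (If one wanted the supremum \emph{inside} the probability one would add a union bound over an $N^{-C}$-net in $z$, on which both sides are $\eta^{-2}$-Lipschitz; but the assertion as written is pointwise in $z$ and $\bq$.)

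The step I expect to be the main obstacle is exactly this off-diagonal moment estimate: running the expansion to sufficiently high order while keeping track of the two endpoint weights on every resolvent factor, and verifying that the combinatorics still delivers the \emph{optimal} error $\Psi$ rather than some larger power of $(N\eta)^{-1/2}$. This is the technical heart of \cite{BloErdKnoYauYin2013}, and it extends, by incorporating the weights, the self-improving bootstrap by which the entrywise law is itself proved.
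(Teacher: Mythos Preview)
The paper does not give its own proof of this theorem; it is quoted verbatim as Theorem~4.2 of \cite{BloErdKnoYauYin2013} and used as a black-box input. Your sketch correctly identifies the strategy of that reference: split $\langle\bq,G\bq\rangle-m$ into diagonal and off-diagonal parts, control the diagonal by the entrywise local law, and bound the off-diagonal quadratic form $\sum_{i\neq j}\overline{\bq(i)}\bq(j)G_{ij}$ by high-moment estimates via iterated resolvent expansion. So there is nothing to compare against in the present paper, and your outline is faithful to the cited source.

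One small caveat on the sketch itself: the diagonal bound $\max_i|G_{ii}-m|\lesssim N^{\xi/2}\Psi$ you invoke is already the generalized-Wigner entrywise law of \cite{ErdYauYin2012Rig,ErdYauYin2012Univ}, which is fine; but the off-diagonal moment bound is not literally the fluctuation-averaging mechanism (there is no average over an index to gain from), it is rather a large-deviation estimate for a quadratic form in independent random variables with resolvent-minor weights, bootstrapped together with the entrywise law. Your description of the combinatorics and the role of $\|\bq\|_2=1$ is accurate in spirit, and you correctly flag the endpoint-weight bookkeeping as the technical crux.
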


An important consequence of this theorem, to be used  in multiple occasions, is the following isotropic delocalization of eigenvectors:
under the same assumptions as Theorem \ref{thm:ILSC}, for any $\xi>0$ and $D>0$, we have
$$
\sup_{|\bq|=1,k\in\llbracket 1,N\rrbracket}\P\left(|\scalar{\bq}{u_k}|>N^{-1+\xi}
\right)\leq N^{-D}.
$$ 
Under the same assumptions the Stieltjes transform  was shown  \cite{ErdYauYin2012Rig} to satisfy the estimate
\begin{equation}\label{eqn:optRig}
\sup_{z\in{\b S}}\P\left(\left|\frac{1}{N}\Tr  G(z) -m(z)\right|>\frac{N^\xi}{N\eta}\right)\leq N^{-D}.
\end{equation}

\subsection{Rescaling.\ }
Recall the definition \eqref{eqn:SymSDE} of the evolution matrix  $H^{(s)}_{t}$.
The variance $\sigma_{ij}^2(t)$  of the matrix element $h_{ij}(t)$  is given by $\sigma_{ij}^2(t)= 
\sigma_{ij}^2+ t/N$  if $i\neq j$, $\sigma_{ij}^2(t)= 
\sigma_{ij}^2+ 2t/N$ if $i=j$.  Denote by 
$
\alpha(t) = \left(1+\frac{N+1}{N} t\right)^{-1/2}.$
 Then  
 $\alpha(t)  H^{(s)}_{t} $
is a generalized Wigner ensemble. In particular, the previously mentionned rigidity estimates hold along our dynamics  if we rescale $H^{(s)}_{t}$ into $\alpha(t)  H^{(s)}_{t}$. Consider the simple time change of our dynamics $u(t)=\int_0^t\alpha(s)^{-2}\rd s$. Then $\widetilde f_{t}(\boeta):=f_{u(t)}(\boeta)$ satisfies 
$$
\partial_t \widetilde f(\boeta)=\sum_{i\neq j}\frac{1}{N(\al(t)\la_i(t)-\al(t)\la_i(t))^2}2\eta_i(1+2\eta_j)\left(\widetilde f(\boeta^{ij})-\widetilde f(\boeta)\right).
$$
In the rest of the paper
it will always be understood that the above time  
rescaling $t\to u(t)$ and matrix scaling   $H^{(s)}_{t}\to\alpha(t)  H^{(s)}_{t}$ are performed so that all rigidity estimates hold as presented in the previous subsection,
for all time.

\subsection{Maximum Principle and regularity.\ }

Let $H_0$ be a symmetric generalized Wigner matrix with eigenvalues $\bla_0$ and an eigenbasis $\boldu_0$. Assume that $\bla,\boldu$ satisfy (\ref{eqn:eigenvaluesSymmetric}) (\ref{eqn:eigenvectorsSymmetric}) with initial condition $\bla_0,\boldu_0$.
Let   $ G(z)= G(z, t)=(\boldu^*\bla\boldu -z)^{-1}(t)$ be the Green function. For  $\omega>\xi>0$ and $\bq\in\RR^N$, consider the following three conditions (remember the notation (\ref{eqn:omega}) for ${\b S}(\omega,N)$):
\begin{align}\notag
&A_1(\bq,\omega,\xi,N)=\Big\{|\langle\bq, G(z) \bq\rangle- m(z)|<N^{\xi}
\left(
\sqrt{\frac{ \Im m(z)}{N\eta}}+\frac{1}{N\eta}
\right),\\
\label{eqn:condition1}&\hspace{3cm}\left|\frac{1}{N}\Tr  G(z) - m(z)\right|<
\frac{N^\xi}{N\eta}\ \mbox{for all}\ t\in[0,1],  z\in {\b S}(\omega,N)
\Big\},\\
&A_2(\omega,N)=\left\{|\la_k(t)-   \gamma_k|<N^{-\frac{2}{3}+\omega}(\hat k)^{-\frac{1}{3}}\ \mbox{for all}\ t\in[0,1], k\in\llbracket 1, N\rrbracket\right\},\label{eqn:condition2}\\
&A_3(\omega,N)=\left\{\langle \bq,u_k(t)\rangle^2< N^{-1+\omega}\ \mbox{for all}\ t\in[0,1],  k\in\llbracket 1, N\rrbracket\right\}.\label{eqn:condition3}
\end{align}
Note that   the   two conditions (\ref{eqn:condition2}) and (\ref{eqn:condition3}) follow from  (\ref{eqn:condition1}), i.e., $A_1\subset A_2\cap A_3$ ,   by  standard 
arguments.   More precisely,  (\ref{eqn:condition3}) can be proved by the argument in  the proof of Corollary 3.2 in 
\cite{ErdYauYin2012Univ}. 
The condition   (\ref{eqn:condition2}) is exactly the content of the rigidity of eigenvalues, i.e., Theorem 2.2 in 
\cite{ErdYauYin2012Rig}. Its proof in  
Section 5 of \cite{ErdYauYin2012Rig} used only the estimate  (\ref{eqn:condition1}).

The following lemma shows that  these conditions hold with high probability.

\begin{lemma}\label{lem:A13}
For any $\omega>\xi> 0,   D >0$ and  $N$ large enough,   we have 
$$
\inf_{\bq\in\RR^N, |\bq|=1} \P\left(A_1(\bq,\omega,\xi,N)\right)\geq 1-N^{- D}, 
$$
where the probability denotes the joint law of the random variable $H_0$ and the paths of $\bla,\boldu$.
\end{lemma}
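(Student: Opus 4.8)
The statement asserts that along the Dyson evolution, the isotropic local semicircle estimate and the trace estimate hold simultaneously for \emph{all} times $t\in[0,1]$ and all $z\in\b S(\omega,N)$, with overwhelming probability and uniformly in the direction $\bq$. The plan is to apply Theorem~\ref{thm:ILSC} and the rigidity bound \eqref{eqn:optRig} at a single time $t$, then promote the pointwise-in-$t$, pointwise-in-$z$ statements to a statement that is uniform in $t$ and $z$ by a union bound over a fine grid followed by a continuity/stochastic-equicontinuity argument.

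First I would observe that, after the time change $t\mapsto u(t)$ and the rescaling $H^{(s)}_t\mapsto\alpha(t)H^{(s)}_t$ described in the Rescaling subsection, the matrix $\alpha(t)H^{(s)}_t$ is, for each fixed $t$, a generalized Wigner matrix satisfying \eqref{eqn:moments} with constants uniform in $t\in[0,1]$; hence Theorem~\ref{thm:ILSC} and \eqref{eqn:optRig} apply to it with $N$-independent constants. Thus for each fixed $t$ and each fixed $z\in\b S$ we get the two bounds in the definition of $A_1(\bq,\omega,\xi,N)$ up to an event of probability $\leq N^{-D'}$, for any $D'$, uniformly in $|\bq|=1$. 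Next I would choose a grid $\mathcal T\subset[0,1]$ of spacing $N^{-K}$ and a grid $\mathcal Z\subset\b S$ of spacing $N^{-K}$, for a large constant $K=K(D)$; the cardinalities are polynomial in $N$, so a union bound gives that the two estimates hold on $\mathcal T\times\mathcal Z$ off an event of probability $\leq N^{-D}$ after adjusting $D'$. It is convenient here to absorb a tiny loss in the exponent $\xi$: prove the grid statement with $\xi/2$ in place of $\xi$.

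The remaining point is the passage from the grid to all $(t,z)$. For fixed $t$, the map $z\mapsto\langle\bq,G(z,t)\bq\rangle$ and $z\mapsto N^{-1}\Tr G(z,t)$ are holomorphic on $\b S$ with derivative bounded by $\eta^{-2}\leq N^{2}$ (since $\|G(z)\|\leq\eta^{-1}$ and $\eta\geq N^{-1+\omega}$); the target functions $m(z)$ and the error bounds on the right side of \eqref{eqn:condition1} are likewise Lipschitz on $\b S$ with polynomial constants. Hence moving $z$ within $N^{-K}$ changes everything by at most $N^{2-K}\ll N^{\xi/2-1}\eta^{-1}$ once $K$ is large, which upgrades the $\xi/2$ grid bound to the $\xi$ bound for all $z\in\b S$. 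For the time variable, I would use that $t\mapsto H^{(s)}_t$ is a Brownian-type evolution: on the event of rigidity (condition $A_2$, which we may include in the union bound via \eqref{eqn:condition2}) the eigenvalues stay away from a neighbourhood of $\b S$ of size $\gtrsim\eta$ is \emph{not} quite true, so instead I would differentiate $G(z,t)$ in $t$ directly via It\^o's formula, $\dd G=-G(\dd H)G+\ldots$, and bound the increments of $\langle\bq,G(z,t)\bq\rangle$ over an interval of length $N^{-K}$ by a martingale inequality (Burkholder--Davis--Gundy) plus a bounded-variation term, each controlled by $\|G\|^2\lesssim\eta^{-2}\le N^{2}$ times the increment of $H$, which is $O(N^{-K/2})$ with overwhelming probability; choosing $K$ large makes this negligible. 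Combining the $z$- and $t$-continuity estimates with the grid bound yields $A_1(\bq,\omega,\xi,N)$ with probability $\geq1-N^{-D}$, and since every constant above is uniform in $\bq$ with $|\bq|=1$, the infimum over $\bq$ is controlled as claimed.

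The main obstacle is the continuity-in-time step: unlike the $z$-direction, where holomorphy gives Lipschitz control for free, controlling the increments of $\langle\bq,G(z,t)\bq\rangle$ in $t$ requires a quantitative stochastic estimate (an It\^o/BDG argument on a short interval, or equivalently a bound on the modulus of continuity of the matrix Brownian motion together with resolvent perturbation $\|G(z,t)-G(z,s)\|\leq\|G(z,t)\|\,\|H_t-H_s\|\,\|G(z,s)\|$). Everything else is a routine union bound over a polynomial grid plus the already-cited single-time inputs; the only care needed is bookkeeping the $\xi$-loss and the choice of $K$ so that the discretization errors sit comfortably below the target error $N^{\xi}((\Im m/N\eta)^{1/2}+(N\eta)^{-1})\gtrsim N^{\xi-1}$.
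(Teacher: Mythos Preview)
Your proposal is correct and follows essentially the same approach as the paper: apply the single-time isotropic local law and trace estimate, union-bound over a polynomial grid in $(t,z)$, then extend to all $(t,z)$ by a continuity argument. The paper carries out the time-continuity step via exactly the resolvent perturbation you mention at the end, namely $G(z,t)-G(z,s)=-G(z,t)(H_t-H_s)G(z,s)$ together with the crude bound $|G(z)_{ab}|\le N\eta^{-1}$ and the Brownian modulus of continuity $\|H_t-H_s\|_{\mathrm{HS}}\lesssim |t-s|^{1/2}$ with overwhelming probability; your It\^o/BDG variant would work equally well but is not needed.
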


\begin{proof}
For any fixed time, by  (\ref{eqn:dual})  (\ref{ISSC_estimate}) and (\ref{eqn:optRig}),
the condition (\ref{eqn:condition1}) holds with probability $1-N^{-C}$ for any $C$.  
As $C$ can be arbitrary, the same condition hold for any time and $z$ in a discrete set of size $N^{C/2}$, say. 
 For any two matrices $H$ and $H'$ with Green functions $G(z)$ and $G'(z)$, we have 
$$
[G(z) - G'(z)]_{ij} = - \sum_{k, \ell} G(z)_{ik} (H-H')_{k \ell} G'(z)_{\ell j}
$$
Since 
$$
|G(z)_{ab}| \le \sum_k \frac { |u_k(a) u_k(b)| } { |\lambda_k - z|}  
\le (2\eta)^{-1}  \sum_k   (|u_k(a)|^2 + |u_k(b)|^2) \le N \eta^{-1}  ,   \quad \im z := \eta,
$$
we have 
$$
\Big | [G(z) - G'(z)]_{ij} \Big | = N^3 \eta^{-2}   \sqrt { \sum_{k, \ell}  | (H-H')_{k \ell} |^2}.
$$
Applying  this inequality to $H^{(s)}_{t}$ and $H^{(s)}_{s}$, we have with very high probability that 
$$
\sup_{ |s-t| \le |t-t'|}\Big | \langle\bq, G(z) \bq\rangle - \langle\bq, G(z) \bq\rangle \Big | \le C N^6 |t-t'|^{1/2}
$$
$N \eta \ge 1$. Here we have used the standard property  that the sup over $[0,t-t']$ of a standard Brownian motion
has size order $|t-t'|^{1/2}$ and Gaussian tails.  
Therefore,  we can use  a continuity argument to  extend the estimate (\ref{eqn:condition1})   to all $z\in {\b S}$ and all time between $0$ and $1$. This proves (\ref{eqn:condition1}). 
\end{proof}

\noindent We define the set
\begin{equation}\label{eqn:A}
A(\bq,\omega,\xi, \nu ,   N)= \Big \{(H_0,\bla) : 
  \P\left(A_1(\bq,\omega,\xi,N) \Big | (H_0,\bla) \right) 
 \geq  1-N^{-\nu}  \Big \}.
\end{equation}
From the previous lemma, one easily sees that for any $\omega>\xi$, $\nu$ and $ D>0$,
we have, for large enough $N$, 
\begin{equation}\label{eqn:largeProb}
\inf_{\bq\in\RR^N}\P\left(A(\bq,\omega,\xi,  \nu,N)\right)\geq 1-N^{-  D}.
\end{equation}

\begin{theorem}\label{thm:maxPrinciple}
Let $n\in\NN$ and $f$ be a solution of the eigenvector moment flow (\ref{ve}) with initial matrix $H_0$ and path $\bla$ in $A(\bq,\omega,\xi, \nu ,N)$ for some $\nu>2$.
Let $t=N^{-\frac{1}{4}+\delta}$, where $\delta\in(\frac{n\omega}{2},1/4]$ and  we assume
that $\omega>\xi$ and  $n\omega<1/2$. Then for any $\e>0$ and large enough $N$ we have
\be\label{fest}
\sup_{\boeta:\mathcal{N}(\boeta)=n}\left|f_t(\boeta)-1\right|\leq C N^{n\omega+\e-2\delta}.
\ee
The constant $C$ depends on $\e,\omega,\delta$ and $n$ but not on $\bq$. 
\end{theorem}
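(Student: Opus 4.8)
The plan is to establish a differential inequality for the quantities $M(t) = \sup_{\mathcal{N}(\boeta)=n} f_t(\boeta)$ and $m(t) = \inf_{\mathcal{N}(\boeta)=n} f_t(\boeta)$, and then integrate it via Gronwall. Because the generator $\mathscr{B}^{(s)}(t)$ has nonnegative off-diagonal rates $c_{ij}(t) \, 2\eta_i(1+2\eta_j)$ whose row sums vanish, it is the generator of a (time-inhomogeneous) Markov jump process on the configuration space $\{\boeta : \mathcal{N}(\boeta)=n\}$; hence $\partial_t f = \mathscr{B}^{(s)}(t) f$ is order-preserving and $M(t)$ is nonincreasing, $m(t)$ nondecreasing. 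The point is to get a \emph{quantitative} rate. First I would fix a configuration $\boeta^\star$ realizing (close to) the maximum at time $t$ and run the flow \emph{backward} a short amount $s$ (or equivalently use the forward semigroup representation $f_t = U(t,t_0) f_{t_0}$ with $f_{t_0}$ having some small oscillation already), writing $f_t(\boeta^\star) - 1$ as an average over trajectories of the jump process of $f_{t_0}(\cdot) - 1$. The key analytic input is that, on the good set $A(\bq,\omega,\xi,\nu,N)$, the rigidity estimate $A_2$ forces $c_{ij}(t) = (N(\lambda_i-\lambda_j)^2)^{-1} \asymp |i-j|^{-2}$ in the bulk (with appropriate edge corrections), so the jump process spreads out: after time $s$ a particle initially at site $i$ has, with overwhelming probability, explored $\gtrsim$ a macroscopic fraction of sites, and in particular visits sites where the initial data is already controlled.

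The mechanism that actually produces the convergence to $1$ — and this is the heart of the argument — is the interplay with the isotropic local law $A_1$ and delocalization $A_3$: the quantity $f$ is an expectation of a normalized moment $N^{j}|\langle \bq, u_k\rangle|^{2j}$, and summing the flow equation against the reversible weight $\pi^{(s)}$ shows that $\sum_{\boeta}\pi^{(s)}(\boeta)(f_t(\boeta) - 1)$ relates to observables like $\frac{1}{N}\sum_k \langle \bq, u_k\rangle^2$-type traces, which by $A_1$ (via $\Im m$ and $\tr G$ estimates) are within $N^{O(\xi)}/(N\eta)$ of their semicircle values. Concretely I would: (1) use $A_1$ to control the "one-particle" observables $f_t(\boeta)$ for $\boeta$ a single particle of mass $n$ at one site — these are essentially $\E(N^n\langle\bq,u_k\rangle^{2n}\mid\bla)$, and one shows directly using the Green function that the average over $k$ of such quantities is $1 + O(N^{n\omega - 1}\cdot\text{stuff})$; (2) feed this into the maximum principle: the dynamics, because it moves particles apart at rate $\asymp |i-j|^{-2}$, mixes a given configuration into a near-average configuration in time $N^{-1/4+\delta}$, and the error in doing so is the $N^{n\omega + \e - 2\delta}$ appearing in \eqref{fest}; (3) bootstrap the single-particle ($m=1$) bound up to general $n$-particle configurations by the recursive structure of $\mathscr{B}^{(s)}$ visible in the explicit action formulas (a configuration with two particles at distinct sites, under one jump, either merges them or splits, and the coefficients are exactly arranged so that the "merged" lower-particle-number data controls it).

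In more detail, I would run the Gronwall step as follows: let $\boeta_t$ be the configuration achieving the sup. Along the flow, $\partial_t(f_t(\boeta_t) - 1) \leq \mathscr{B}^{(s)}(t)(f_t - 1)(\boeta_t)$, and I split the generator into jumps that stay "far from the diagonal" (where $c_{ij}$ is well-controlled by $A_2$ and the jump lands on a configuration already within the desired tolerance by the single-particle analysis) and a negligible remainder. This gives $\partial_t (M(t) - 1) \leq -\kappa(t)\big((M(t)-1) - (\text{avg} - 1)\big)$ for a rate $\kappa(t)$ that, integrated over $[t_0, t]$ with $t = N^{-1/4+\delta}$, is $\gg 1$ — this is where the exponent $-1/4$ enters, since one needs $\int_0^t \kappa \gtrsim \log N$ and the available spectral-gap-type lower bound on $\kappa$ coming from the $|i-j|^{-2}$ rates is of order $t \cdot N^{\text{something}}$. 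Symmetrically for $m(t)$. Combining, $M(t) - m(t)$ and $M(t) - 1$ both shrink to $N^{n\omega + \e - 2\delta}$.

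\textbf{Main obstacle.} The genuine difficulty is \emph{not} the maximum-principle bookkeeping but obtaining the quantitative ergodicity rate $\kappa(t)$ for this multi-particle walk in a random environment uniformly in the configuration — in particular showing that the walk genuinely escapes the region near a single site fast enough, using only the rigidity $A_2$ (which controls $\lambda_i - \lambda_j$ only down to scale $N^{-2/3+\omega}\hat k^{-1/3}$ near the edge and $N^{-1+\omega}$ in the bulk) and \emph{without} a level-repulsion lower bound on individual gaps. One must ensure that the rare event of an anomalously small gap $c_{ij}$ being \emph{large} (a fast jump) does not derail the maximum principle — here the sign helps (extra fast jumps only accelerate mixing), but controlling the worst case still requires care, and this is precisely why the authors restrict to $t \gtrsim N^{-1/4}$ rather than the optimal $N^{-1}$: the cruder-but-robust estimate on $\sum_j c_{ij}(t)$ that survives without level repulsion only buys the $N^{-1/4}$ relaxation time. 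The secondary technical point is propagating the $m=1$ base case to all $n$ through the coupled hierarchy while keeping the $N^{n\omega}$ loss linear in $n$.
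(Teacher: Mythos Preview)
Your skeleton is right --- maximum principle, Gronwall on $M(t)-1$, induction on $n$ --- but you are missing the single analytic trick that makes the argument work, and your substitute for it (mixing of the walk, averages against $\pi^{(s)}$, spectral-gap type rates) is both harder and not what is actually done.

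The key observation in the paper is this: at a maximizing configuration, say in the one-particle case $f_s(k_0)=\max_k f_s(k)$, one regularizes the singular kernel $(\lambda_j-\lambda_{k_0})^{-2}$ by inserting a free parameter $\eta>0$,
\[
\partial_s f_s(k_0)\;\le\;\frac{1}{N\eta}\sum_{j\ne k_0}\frac{\eta\, f_s(j)}{(\lambda_j-\lambda_{k_0})^2+\eta^2}
\;-\;f_s(k_0)\,\frac{1}{N\eta}\sum_{j\ne k_0}\frac{\eta}{(\lambda_j-\lambda_{k_0})^2+\eta^2}.
\]
Now the first sum is \emph{exactly} the conditional expectation of $\Im\langle\bq,G(\lambda_{k_0}+\ii\eta)\bq\rangle$, because $f_s(j)=N\,\E(\langle\bq,u_j\rangle^2\mid\bla)$ and the Poisson kernel is precisely the spectral weight in $\Im G$. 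Similarly the second sum is $\Im\frac{1}{N}\tr G(\lambda_{k_0}+\ii\eta)$. Both are then replaced by $\Im m(\lambda_{k_0}+\ii\eta)$ with explicit errors, \emph{directly} from the isotropic local law $A_1$. No mixing estimate, no ergodicity rate, no sum against $\pi^{(s)}$ is needed --- the connection to the Green function is an identity, not an estimate. This gives immediately $\partial_s(M(s)-1)\le -c\eta^{-1}\Im m\,(M(s)-1)+O(N^\xi/(N^{1/2}\eta^{3/2}))$, and the exponent $-1/4$ enters because at the edge $\Im m\gtrsim\sqrt\eta$, so the rate is $\eta^{-1/2}$; one chooses $\eta=N^{-1/2+2\delta-\e}$ and $t=N^{-1/4+\delta}$ so that $t/\sqrt\eta\to\infty$ while the error stays small.

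For general $n$ the same Poisson-kernel trick applied at a particle site $k_r$ of the maximizer $\boxi$ turns $\sum_j f_s(\boxi^{k_rj})\cdot(\text{Poisson weight})$ into $f_s(\boxi\setminus k_r)\cdot\Im m(z)$ plus errors --- this is again an identity coming from the definition of $f$ as a conditional moment --- and $f_s(\boxi\setminus k_r)$ has $n-1$ particles, which is the induction. Your remark that one must propagate the base case through the ``coupled hierarchy'' is correct in spirit, but the mechanism is this reduction-by-one via the Green function, not a bootstrap through jump rates. Your identification of the main obstacle (quantitative ergodicity of the walk without level repulsion) is therefore misplaced: the paper avoids that issue entirely in this theorem; it only arises later when pushing the relaxation time down to $N^{-1}$.
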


We have the following
asymptotic normality for eigenvectors of a Gaussian divisible Wigner ensemble with a small Gaussian component.

\begin{corollary}\label{cor:GaussDivisible}
Let $\delta$  be an arbitrarily small constant and $t=N^{-1/4+\delta}$. Let $H_t$ be the solution
to (\ref{eqn:SymSDE}) and  $(u_1(t),\dots,u_N(t))$ be an eigenbasis of
$H_t$. The initial condition $H_0$ is assumed to be a symmetric generalized Wigner matrix.
Then for any polynomial $P$ in $m$ variables and any $\e>0$, for large enough $N$ we have
\begin{equation}\label{eqn:loc}
\sup_{I\subset\llbracket 1, N\rrbracket,|I|=m, |\bq|=1}
\left|\E\left(P\left((N\langle \bq,u_k(t)\rangle^2)_{k\in I}\right)  \right)-\E P\left((\mathscr{N}_j^2)_{j=1}^m\right)\right|\leq C N^{\e- 2 \delta }.
\end{equation}

\end{corollary}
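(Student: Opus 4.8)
The plan is to derive Corollary \ref{cor:GaussDivisible} from Theorem \ref{thm:maxPrinciple} by the standard dictionary between moments of polynomials in the $\sqrt{N}\langle\bq,u_k\rangle$ and the solution $f_t(\boeta)$ of the eigenvector moment flow at configurations $\boeta$ with a bounded number of particles. First I would fix a polynomial $P$ in $m$ variables; expanding $P$ and using multilinearity, it suffices to control, for each fixed multi-index $(j_1,\dots,j_m)$ with $\sum_\ell j_\ell=:n$ bounded in terms of $\deg P$, the quantity $\E\big(\prod_{\ell=1}^m (N\langle\bq,u_{k_\ell}(t)\rangle^2)^{j_\ell}\big)$ for distinct indices $k_1,\dots,k_m$ ranging over $I$. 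By the definition \eqref{feq} of $f^{(s)}_{\bla,t}$ together with the rescaling \eqref{eqn:rescaleSym}, this expectation, conditioned on the eigenvalue path $\bla$, is exactly $\big(\prod_\ell a(2j_\ell)\big)\, f^{(s)}_{\bla,t}(\boeta)$, where $\boeta$ is the configuration with $\eta_{k_\ell}=j_\ell$ and $\cN(\boeta)=n$. Since $a(2j)=\E(\mathscr N^{2j})$, the target Gaussian moment $\E P\big((\mathscr N_j^2)_{j=1}^m\big)$ is precisely what one obtains by replacing each such $f^{(s)}_{\bla,t}(\boeta)$ by the constant $1$.

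Second, I would invoke Theorem \ref{thm:maxPrinciple}: on the event $A(\bq,\omega,\xi,\nu,N)$ (with $\nu>2$, and $\omega,\xi$ chosen small so that $n\omega<1/2$ and $\delta\in(n\omega/2,1/4]$, which is possible since $\delta$ is an arbitrarily small fixed constant and $n$ is bounded), one has $\sup_{\cN(\boeta)=n}|f^{(s)}_{\bla,t}(\boeta)-1|\le C N^{n\omega+\e-2\delta}$. Taking the conditional expectation of the polynomial over the eigenvector law and then integrating over the good event, the contribution of $A(\bq,\omega,\xi,\nu,N)$ is within $C N^{n\omega+\e-2\delta}$ of the Gaussian answer. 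Choosing $\omega$ (hence the exponent $n\omega$) small enough relative to $\delta$, this is bounded by $CN^{\e'-2\delta}$ for a slightly enlarged $\e'$; renaming $\e'$ as $\e$ gives the claimed bound on the good event.

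Third, I must dispose of the complement of $A(\bq,\omega,\xi,\nu,N)$. By \eqref{eqn:largeProb} this event has probability at most $N^{-D}$ for any $D$, uniformly in $\bq$. On it I need a crude deterministic or high-probability bound on $\big|\E_{\boldu}\big(P((N\langle\bq,u_k(t)\rangle^2)_{k\in I})\big)\big|$ that is at most polynomial in $N$; multiplied by $N^{-D}$ with $D$ large this is negligible. Such a bound follows from the finite-moment assumption \eqref{eqn:moments}: the entries of $H_t$ have all moments bounded (uniformly in $t\le 1$ after the rescaling of Section 4.2), hence by, e.g., the isotropic delocalization bound — or simply by $|\langle\bq,u_k\rangle|\le |u_k|=1$ giving $N\langle\bq,u_k\rangle^2\le N$ — one gets $|P(\cdots)|\le N^{C(P)}$ deterministically. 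Finally I would take the supremum over $I$ with $|I|=m$, $|\bq|=1$: the estimate in Theorem \ref{thm:maxPrinciple} is uniform in $\bq$ (the constant there does not depend on $\bq$), and there are only polynomially many choices of $I$, so the union over $I$ and the supremum over $\bq$ cost at most an extra $N^{o(1)}$, absorbed into $N^\e$.

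The main obstacle is really packaged inside Theorem \ref{thm:maxPrinciple}, which I am allowed to assume; at the level of this corollary the only genuine care needed is (i) the bookkeeping that turns an arbitrary polynomial $P$ into a finite sum of monomials and matches the normalizing constants $a(2j_\ell)$ with the Gaussian moments so that the ``limit'' of $f_t$ being $1$ yields exactly $\E P((\mathscr N_j^2))$, and (ii) verifying that for the fixed small $\delta$ one can pick $\omega>\xi>0$ making all the numerical constraints of Theorem \ref{thm:maxPrinciple} ($n\omega<1/2$, $\delta>n\omega/2$, $\omega>\xi$) simultaneously hold while keeping the error exponent $n\omega+\e-2\delta$ strictly of the form $\e-2\delta$ after renaming. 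Both are routine.
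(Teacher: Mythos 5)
Your proposal is correct and follows essentially the same route as the paper, whose own proof of this corollary is a two-line reduction to Theorem \ref{thm:maxPrinciple} (choosing $\omega=2\xi$ arbitrarily small and $\nu$ large, so that \eqref{fest} plus the high probability of the good set gives \eqref{eqn:loc}). You have simply filled in the routine bookkeeping — the monomial expansion, the matching of the normalizations $a(2j_\ell)$ with the Gaussian moments, the parameter constraints, and the crude bound $N\langle\bq,u_k\rangle^2\le N$ on the complement of the good event — all of which is consistent with what the paper leaves implicit.
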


\begin{proof} Since $H_0$ is a generalized Wigner matrices, the isotropic local semicircle law, Theorem \ref{thm:ILSC},  holds  for all time with $\xi$ arbitrarily small. With  $\om = 2 \xi$,
and noticing that Lemma \ref{lem:A13} holds for arbitrary large $\nu>0$,  
\eqref{fest} implies that  \eqref{eqn:loc} holds.
\end{proof}

\begin{proof}[Proof of Theorem \ref{thm:maxPrinciple}]
Because of (\ref{eqn:largeProb})  and $A_1\subset A_2\cap A_3$,  we can assume in this proof that the trajectory $(H_t)_{0\leq t\leq 1}$ is in 
$A_1(\bq,\omega,\xi,N)\cap A_2(\omega,N)\cap A_3(\omega,N)$;  the complement of this set induces  an additional error $\OO(N^{-\nu+\xi})$ in (\ref{fest}), negligible compared to $N^{n\omega+\e-2\delta}$.

We begin with the case $n=1$. Let $f_s(k)=f_s(\boeta)$, where $\boeta$ is the configuration with one particle at 
the lattice point $k$.  The equation \eqref{ve} becomes 
\be\label{ve3}
\partial_s f_s(k)=\frac{1}{N}\sum_{j\neq k}\frac{f_s(j)-f_s(k)}{(\la_j-\la_{k})^2}
\ee 
Assume that
$$
\max_{k\in\llbracket 1,N\rrbracket}f_s(k)=f_s(k_0) 
$$ 
for some $k_0$ ($k_0$ is not unique in general).  Clearly, we have 
\[
\frac{f_s(j)-f_s(k_0)}{(\la_j-\la_{k_0})^2}  \le \frac{f_s(j)-f_s(k_0)}{(\la_j-\la_{k_0})^2 + \eta^2} 
\]
Together with  (\ref{ve3}),  for any $\eta>0$ we have
\begin{equation}\label{eqn:fun}
\partial_s f_s(k_0)=\frac{1}{N}\sum_{j\neq k_0}\frac{f_s(j)-f_s(k_0)}{(\la_j-\la_{k_0})^2}
\leq
\frac{1}{N\eta}\sum_{j\neq k_0}\frac{\eta f_s(j)}{(\la_j-\la_{k_0})^2+\eta^2}
-
f_s(k_0)\frac{1}{N\eta}\sum_{j\neq k_0}\frac{\eta}{(\la_j-\la_{k_0})^2+\eta^2}.
\end{equation}
Notice that
$$
\frac{1}{N}\sum_{1\leq j\leq N}\frac{\eta f_s(j)}{(\la_j-\la_{k_0})^2+\eta^2}
=
\E\left(\sum_{j=1}^N\frac{\eta \langle\bq,u_j\rangle^2}{(\la_j-\la_{k_0})^2+\eta^2}\Big | (H_0,\bla)\right).
$$
From the definition of $A(\bq,\omega,\xi, \nu,N)$,  for $N^{-1+\omega}<\eta<1$ we therefore have
$$
\frac{1}{N}\sum_{1\leq j\leq N,  j \neq k_0}\frac{\eta f_s(j)}{(\la_j-\la_{k_0})^2+\eta^2}=
\Im m(\la_{k_0}+\ii\eta)+\OO\left(\frac{N^{\xi}(\Im m(\la_{k_0}+\ii\eta))^{1/2}}{(N\eta)^{1/2}}
+ \frac{N^\omega}{N\eta} \right), 
$$
where the error $N^\omega/(N\eta)$ comes from the missing term $j= k_0$
and we have used that 
for $(H_0,\la)\in A(\bq,\omega,\xi, \nu,N)$,  $N\langle\bq,u_j\rangle^2$ is bounded by
$N^\omega$ with very high  probability.
For the same reason, we have
$$
\frac{1}{N}\sum_{1\leq j\leq N,  j \not = k_0}\frac{\eta}{(\la_j-\la_{k_0})^2+\eta^2}=\Im m(\la_{k_0}+\ii\eta)+\OO\left(\frac{N^{\omega}}{N\eta}\right).
$$
Using these estimates, (\ref{eqn:fun}) yields
$$
\partial_s (f_s(k_0)-1)\leq -c\frac{\Im m(\la_{k_0}+\ii\eta)}{\eta}(f_s(k_0)-1)+\OO\left(\frac{N^{\xi}\Im m(\la_{k_0}+\ii\eta)^{1/2}}{N^{1/2}\eta^{3/2}}\right)+\OO\left(\frac{N^{\omega}}{N\eta}\right).
$$
Moreover, from the definition of $A(\bq,\omega,\xi, \nu ,N)$, we know that $-2-N^{-\frac{2}{3}+\omega}\leq \la_{k_0}\leq 2+N^{-\frac{2}{3}+\omega}$. As our final choice of $\eta$ will satisfy $N^{-\frac{2}{3}+\xi}\leq \eta\leq 1$, this implies that 
$$
\Im m(\la_{k_0}+\ii\eta)\geq c\sqrt{\eta}.
$$
Let 
$S_s=\sup_{k}(f_s(k)-1).$
Note that there may be some $s$ for which $S_s$ is not differentiable (at times when the maximum is obtained for at least two distinct indices). But if we denote
\begin{equation}\label{eqn:derivative}
S'_t=\limsup_{u\to t}\frac{S_t-S_u}{t-u},
\end{equation}
 the above reasoning  
shows
$$
S'_s\leq -\frac{c}{\sqrt{\eta}} S_s+C\frac{N^\xi}{N^{1/2}\eta^{3/2}}+ C\frac{N^\omega}{N \eta } 
\le -\frac{c}{\sqrt{\eta}} S_s+C\frac{N^\om}{N^{1/2}\eta^{3/2}}.
$$
We chose $\eta=N^{-\frac{1}{2}+2\delta-\e}$ for some small $\e\in(0,2\delta-\omega)$ and $t=N^{-\frac{1}{4}+\delta}$. The  Gronwall inequality gives
$$
S_t\leq C \left( e^{-N^{\e/2}}+ N^{\omega+\e-2\delta}\right).
$$
We can do the same reasoning for the minimum of $f$. This concludes the proof for $n=1$. 

For $n\geq2$ the same argument works and we will proceed by induction.
Let $\boxi$ satisfy
$$
\max_{\mathcal{N}(\boeta)=n}f_s(\boeta)=f_s(\boxi).
$$
Assume $\boxi$ is associated to $j_r$ particles at site $k_r$, $1\leq r\leq m$ for some $m\leq n$, where the $k_r$'s are distinct and $j_r\geq 1$.
Then
\begin{equation}\label{eqn:diffmult}
\partial_s f_s(\boxi)
\leq C\ \sum_{r=1}^m\left(\frac{1}{N\eta}\sum_{j\neq k_r}\frac{\eta f_s(\boxi^{k_r j})}{(\la_{k_r}-\la_j)^2+\eta^2}
-f_s(\boxi)\frac{1}{N\eta}\sum_{j\neq k_r}\frac{\eta}{(\la_{k_r}-\la_j)^2+\eta^2}
\right), 
\end{equation}
where $\boxi^{k_r j}$ is defined in Section 3.2. 
We now  estimate  the first term on the right hand side (the second term was estimated in the previous $n=1$ step).
By  \eqref{eqn:condition3},   for $(H_0,\la)\in A(\bq,\omega,\xi, \nu,N)$,  $N\langle\bq,u_j\rangle^2$ is bounded by
$N^\omega$ with very high  probability. Thus  we  have
$$
\frac{1}{N}\sum_{j\neq k_r}\frac{\eta f_s(\boxi^{k_r j})}{(\la_{k_r}-\la_j)^2+\eta^2}=
\frac{1}{N}\sum_{j\not\in\{k_1,\dots,k_m\}}\frac{\eta f_s(\boxi^{k_r j})}{(\la_{k_r}-\la_j)^2+\eta^2}+\OO\left(\frac{N^{n\omega}}{N\eta}\right).
$$
Moreover, by definition the above sum can be estimated by 
\begin{align}
&\E\left(\notag
\left(
\frac{(N\langle \bq,u_{i_r}\rangle^2)^{j_r-1}}{a(2j_{r}-2)}
\prod_{1\leq r\leq m,r\neq r}\frac{(N\langle \bq,u_{i_r}\rangle^2)^{j_r}}{a(2j_{r})}\right)
\left(
\frac{1}{N}\sum_{j\not\in\{k_1,\dots,k_m\}}\frac{\eta(N\langle\bq,u_j\rangle^2)}{(\la_j-\la_{k_r})^2+\eta^2}
\right)
\, \Big | \, (H_0,\bla)
\right)\\
=&\ \label{eqn:condit1}
\E\left(
\left(
\frac{(N\langle \bq,u_{i_r}\rangle^2)^{j_r-1}}{a(2j_{r}-2)}
\prod_{1\leq r\leq m,r\neq r}\frac{(N\langle \bq,u_{i_r}\rangle^2)^{j_r}}{a(2j_{r})}\right)
\Im\langle\bq,G(\la_{k_r}+\ii\eta),\bq\rangle
\, \Big | \,  (H_0,\bla)
\right)
+\OO\left(\frac{N^{n\omega}}{N\eta}\right),
\end{align}
where we first used that extending the indices to $1\leq j\leq N$ induces an error $\OO(N^\omega(N\eta)^{-1})$
and  the bound  $N\langle\bq,u_j\rangle^2 \le N^\omega$ holds with very high  probability. We have also used that 
for $(H_0,\la)\in A(\bq,\omega,\xi, \nu,N)$, we can replace $\Im\langle\bq,G(\la_{k_r}+\ii\eta),\bq\rangle$
by $\Im m(\la_{k_r}+\ii\eta)+\OO(N^\xi(N\eta)^{-1/2})$.  This yields
\begin{equation}\label{eqn:condit2}
\frac{1}{N}\sum_{j\neq k_r}\frac{\eta f_s(\boxi^{k_r j})}{(\la_{k_r}-\la_j)^2+\eta^2}=
f_s(\boxi\backslash k_r)\Im m(\la_{k_r}+\ii\eta)+\OO\left(\frac{N^{n\omega}}{(N\eta)^{1/2}}\right),
\end{equation}
where $\boxi\backslash k_r$ stands for the configuration $\boxi$ with one particle removed from site $k_r$.  
By induction assumption,  we  can use \eqref{fest} to  estimate $f_s(\boxi\backslash i_r)$ for $s\in(t/2,t)$. 
We have thus  proved that
$$
\partial_s (f_s(\boxi)-1)\leq -\frac{c}{\sqrt{\eta}}(f_s(\boxi)-1)+\OO\left(\frac{N^{n\omega}}{N^{1/2}\eta^{3/2}}\right)+\OO\left(\frac{N^{(n-1)\omega+\e-2\delta}}{\eta}\right).
$$
on $(t/2,t)$. 
Notice that by our assumptions on the parameters $\om, \delta, \eta$ and $\xi$, the first error term always dominates the second.
One can now bound $|f_s(\boxi)-1|$ in the same way as in the $n=1$ case.
\end{proof}

If $\omega$ can be chosen arbitrarily small (this is true for generalized Wigner matrices), Theorem 
\ref{thm:maxPrinciple} gives
$\sup_{\boeta:\mathcal{N}(\boeta)=n}|f_t(\boeta)-1|\to 0$ for any $t=N^{-1/4+\e}$. This could be improved to 
$t=N^{-1/3+\e}$ by allowing $\eta$ to depend on $k_0$ in the previous reasoning (chose $\eta=N^{-2/3+\e}{\hat k_0}^{1/3}$).

More generally, our proof  shows that 
the following equation (\ref{eqn:StQUE}) (with the convention \ref{eqn:derivative}) holds.
Let 
\begin{align*}
&\Delta_1(k,\eta)=\E(\langle\bq,G(\la_k+\ii\eta)\bq\rangle-\Im m(\la_k+\ii\eta)\mid (H_0,\bla)),\\
&\Delta_2(k,\eta)=\E(N^{-1}{\rm Tr} G(\la_k+\ii\eta)-\Im m(\la_k+\ii\eta)\mid (H_0,\bla)),
\end{align*}
where all variables depend on $t$  (remember in particular that $G(z)=(\boldu_t^*\bla_t\boldu_t -z)^{-1}$).
Then the  following  maximum  inequality holds: 
\begin{equation}\label{eqn:StQUE}
S'_t
\leq
\max_{k:S_t=f_t(k)}\inf_{\eta>0}
\left\{-\frac{\Im m(\la_k+\ii\eta)}{\eta}\ S_t
+
\frac{|\Delta_1(k,\eta)|}{\eta}
+
\frac{|\Delta_2(k,\eta)|(S_t+1)}{\eta}
+
\frac{N^\omega (S_t+1)}{N\eta^2}\right\}.
\end{equation}
Similar inequalities  for a general number of particles can be obtained.

\section{Proof of the main results} 

\label{sec:Proof}

\subsection{A comparison theorem for eigenvectors.\ }

Corollary \ref{cor:GaussDivisible} asserts  the asymptotic normality 
of eigenvector components for Gaussian divisible ensembles for $t$ not too small. 
In order to prove Theorem \ref{thm:main}, 
we need to remove the small Gaussian components of the matrix elements in this Gaussian divisible ensemble. 
Similar questions occurred in the proof of universality conjecture for Wigner matrices 
and several methods were developed for this purpose (see, e.g., \cite{ErdPecRamSchYau2010}  and  \cite{TaoVu2011}).  
Both methods can be extended to yielding similar eigenvector comparison results.  
In this paper, we  will  use  the Green function comparison theorem  introduced   in 
\cite [Theorem 2.3]{ErdYauYin2012Univ} (the parallel result following the  argument  of  \cite{TaoVu2011} was given in  \cite{TaoVu2012}). 
Roughly speaking, \cite[Theorem 1.10]{KnoYin2011} states that the distributions of eigenvectors 
for two generalized Wigner ensembles are identical provided the first four moments of the matrix elements 
are identical and a level repulsion estimate  holds for {\it one}  of the two ensembles.  
We  note that the level repulsion 
estimates needed in \cite{TaoVu2012} are substantially different. 
We first recall  the following definition.

\begin{definition}[Level repulsion estimate] \label{lr}
 Fix an energy $E$ such that $ \gamma_k \le E \le \gamma_{k+1}$ for some $k \in \llbracket 1, N\rrbracket$. 
A generalized Wigner ensemble is said to satisfy  the level repulsion at the energy $E$ if there exist $\alpha_0>0$ such that for any $0<\alpha<\alpha_0$, there exists $\delta>0$ such that 
$$
\P\left(|\{ j: \la_{ j\nc} \nc \in[E-N^{-2/3-\alpha}\hat k^{-1/3},  E+N^{-2/3-\alpha}\hat k^{-1/3}]\}|\geq 2\right)\leq N^{-\alpha-\delta},
$$
where $\hat k = \min (k, N-k+1)$. 
A matrix ensemble is said to satisfy the  level repulsion estimate uniformly if this property holds for any energy 
$E\in(-2,2)$. 
\end{definition}

We note that such level repulsion estimates for generalized Wigner matrices was proved near the edge 
(more precisely for $ 0 \le \hat k \le N^{1/4}$)   \cite[Theorem 2.7]{BouErdYau2013} and in the bulk  
\cite{ErdYau2012} via the universality of  gap statistics.   In the intermediate regime,  the level repulsion 
in this sense has not been worked out  although it is clear that the techniques developed in these papers 
can be adapted to prove such results.  From now on, we will assume that this level repulsion estimate holds in the region 
$\mathbb{T}_N= \llbracket 1,N^{1/4} \rrbracket  \cup 
\llbracket  N^{1-\delta} , N- N^{1-\delta} \rrbracket \cup \llbracket N-N^{1/4},N\rrbracket$ needed for Theorem \ref{thm:main}
and its corollaries.\\

The following theorem is a slight extension  of \cite[Theorem 1.10]{KnoYin2011}  with the following modifications \nc: (1)
We slightly weaken 
the fourth moment matching condition. (2) The original theorem was only for components of 
eigenvectors;  we allow the eigenvector to project to a fixed direction. (3) We state it for all energies in the entire spectrum. 
(4) We include an error bound for the comparison. (5) We state it only for  eigenvectors with no involvement of eigenvalues. 
Theorem \ref{t2} can be proved  using  the  argument   in \cite{KnoYin2011}; the only modification 
is to replace  the local semicircle law used in \cite{KnoYin2011} by the isotropic local semicircle law, Theorem \ref{thm:ILSC}. 
Since this type of argument based on the Green function comparison theorem has been done several times, 
we will not repeat it here. Notice that near the edge, the four moment matching condition can be replaced by just two moments. But for applications in this paper,   this improvement will not be used and so we refer the interested reader to 
\cite{KnoYin2011}.

\begin{theorem}[Eigenvector Comparison Theorem] \label{t2}
Let $H^{\f v}$ and $H^{\f w}$ be generalized Wigner ensembles where
$H^{\f v}$ satisfies  the level repulsion estimate 
uniformly. Suppose that the first three off-diagonal moments of
 $H^\bv$ and $H^\bw$ are the same, i.e.
$$
\E^{\f v}(h_{ij}^3)=  \E^{\f w}(h_{ij}^3)
\for i \neq j  
$$
and that the first two diagonal moments of
 $H^\bv$ and $H^\bw$ are the same, i.e.
$$
\E^{\f v}(h_{ii}^2)=  \E^{\f w}(h_{ii}^2).
$$
Assume also that the fourth  off-diagonal moments of
 $H^\bv$ and $H^\bw$ are almost the same, i.e., there is an $a > 0$ such that 
$$
	\Big |  \E^{\f v}(h_{ij}^4)- \E^{\f w}(h_{ij}^4)\Big | \le N^{-2-a} 
\for i \neq j. 
$$
Then there is $\e > 0$ depending on $a$ such that for any integer $k$, any $\bq_1, \ldots \bq_k $ and  any choice of indices  $1\le j_1, \ldots, j_k \le N$
we have
$$
 \left(\E^{\f v} - \E^{{\f w}}\right) \Theta \pB{ 
N\langle \bq,u_{j_1} \rangle^2, \ldots, N\langle \bq,u_{j_k}\rangle^2   } =\OO(N^{-\e}),
$$
where $\Theta$ is a smooth function that satisfies
$$
\abs{\partial^m \Theta(x)} \;\leq\; C (1 + \abs{x})^C
$$
for some arbitrary $C$ and all $m\in \N^{k}$ satisfying
$\abs{m} \leq 5$.
\end{theorem}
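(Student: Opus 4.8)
The plan is to follow the two‑step scheme of \cite{KnoYin2011} (the parallel argument based on \cite{TaoVu2011} is carried out in \cite{TaoVu2012}), the only genuinely new input being the extra book‑keeping needed to accommodate the relaxed fourth–moment hypothesis $|\E^{\f v}(h_{ij}^4)-\E^{\f w}(h_{ij}^4)|\le N^{-2-a}$. First I would rewrite the eigenvector functional as a smooth functional of resolvent entries $\langle\bq,G(z)\bq\rangle$, $G(z)=(H-z)^{-1}$, at spectral parameters just below the local eigenvalue spacing; then I would compare the two ensembles by a Lindeberg replacement of the matrix entries, Taylor expanding $G$ in each entry. Throughout I would use Theorem \ref{thm:ILSC}: the isotropic local law, the eigenvalue rigidity it implies, and the isotropic delocalization bound $N\langle\bq,u_k\rangle^2\le N^{o(1)}$ valid with very high probability.

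\textbf{Resolvent representation.} For $\eta>0$ the spectral decomposition gives $\eta\,\im\langle\bq,G(E+\ii\eta)\bq\rangle=\sum_{j}\frac{\eta^{2}}{(\lambda_j-E)^2+\eta^2}\langle\bq,u_j\rangle^2$. Fixing $a'\in(0,a)$ small and taking $\eta=N^{-1-a'}$, for $E$ within $\sim N^{-1}$ of the classical location $\gamma_{j_\ell}$ (within $\sim N^{-2/3}$ at the edge) the right–hand side equals, up to a factor $1+o(1)$, the single term $\langle\bq,u_{j_\ell}\rangle^2$, provided $\lambda_{j_\ell}$ is separated from its neighbours at scale $\eta$ — which holds with probability $1-O(N^{-\e})$ by the level repulsion hypothesis on $H^{\f v}$ (and, once the comparison is in place, for $H^{\f w}$ as well), while delocalization and rigidity bound the total contribution of the remaining eigenvalues by $N^{-c}$. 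Averaging this identity over $E$ against the semicircle density then expresses $\langle\bq,u_{j_\ell}\rangle^{2}$, up to an $N^{-c}$ error holding with very high probability, as a fixed (averaged) smooth functional of resolvent entries; denote by $\widetilde\Theta$ the resulting functional. Because the derivatives of $\Theta$ up to order $5$ grow at most polynomially and the low–probability complement is absorbed using $N\langle\bq,u_j\rangle^2\le N^{o(1)}$, one gets for each ensemble $\E\,\Theta\big(N\langle\bq,u_{j_1}\rangle^2,\dots,N\langle\bq,u_{j_k}\rangle^2\big)=\E\,\widetilde\Theta+O(N^{-c})$, with $\widetilde\Theta$ inheriting $|\partial^m\widetilde\Theta|\le N^{o(1)}(1+|x|)^C$ for $|m|\le5$.

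\textbf{Green function comparison, and the main obstacle.} It then remains to prove $(\E^{\f v}-\E^{\f w})\widetilde\Theta=O(N^{-\e})$. I would order the independent real parameters among the entries $\{h_{ij}\}_{i\le j}$, replace them one at a time passing from the law of $H^{\f v}$ to that of $H^{\f w}$, and telescope, so that it suffices to bound a single–entry replacement with all other entries frozen. For such an entry $h$, expand $G$ — hence $\widetilde\Theta$ — in powers of $h$ up to fourth order via the resolvent identity: the terms of order $0,1,2,3$ have equal expectation under the two laws since the first three off–diagonal (two diagonal) moments agree, so they drop out of the difference; the order‑$4$ terms differ by at most $N^{-2-a}\cdot N^{o(1)}$; and the order‑$5$ remainder is $O(N^{-5/2}N^{O(a')})$ per entry, where the isotropic local law at $\eta=N^{-1-a'}$ bounds the low–order coefficients by $N^{o(1)}$ and $\|G\|\le\eta^{-1}=N^{1+a'}$ controls the remainder. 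Summing over the $O(N^{2})$ entries gives a total error $O(N^{-a+o(1)})+O(N^{-1/2+O(a')})$, which is $O(N^{-\e})$ for a suitable $\e>0$ depending on $a$ once $a'$ is chosen small. The main obstacle is the tension between the two scales involved: the resolvent representation of an \emph{individual} eigenvector forces $\eta$ below the eigenvalue spacing, whereas both the isotropic local law and the resolvent–expansion bounds deteriorate as $\eta\downarrow0$. The resolution is precisely the choice $\eta=N^{-1-a'}$ with $a'$ small, together with the fact that the level repulsion window $N^{-2/3-\alpha}\hat k^{-1/3}$ is, after rescaling to the relevant energy, comparable to the local spacing, so that exactly one eigenvalue is isolated while the per‑entry expansion errors stay $N^{o(1)}$. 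Checking that the resulting exponent is strictly positive is the standard but somewhat tedious heart of the estimate; as the argument is otherwise a verbatim adaptation of \cite{KnoYin2011} with Theorem \ref{thm:ILSC} in place of the entrywise local law, I would not reproduce it in full.
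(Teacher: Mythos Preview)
Your proposal is correct and takes essentially the same approach as the paper: the paper does not give a detailed proof of this theorem but simply states that it follows from the argument in \cite{KnoYin2011} with the entrywise local semicircle law replaced by the isotropic local semicircle law (Theorem~\ref{thm:ILSC}), which is precisely the two-step scheme (resolvent representation via level repulsion, then Lindeberg replacement with resolvent expansion) that you outline. One small point worth making explicit: the isotropic local law is only stated for $\eta\ge N^{-1+\omega}$, so to control resolvent entries at $\eta=N^{-1-a'}$ you need the standard dyadic extension $|\langle\bq,G(E+\ii y)\bq\rangle|\le C(\log N)(\eta/y)\im\langle\bq,G(E+\ii\eta)\bq\rangle$ for $y\le\eta$ (see Appendix~A of the paper or \cite{ErdYauYin2012Univ}), which is implicit in your claim that the coefficients are $N^{o(1)}$.
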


\subsection{Proof of Theorem \ref{thm:main}\ .}
We now summarize our situation: Given a generalized Wigner ensemble  $\hat H$, 
we wish to prove that \eqref{eqn:mainSym} holds for the eigenvectors of $\hat H$. 
We have proved in \eqref{eqn:loc} that this estimate holds for 
any Gaussian divisible ensemble of type $H_0+\sqrt{t}\ U$, and therefore by simple rescaling for any
ensemble of type
$$
H_t = e^{-t/2} H_0 + (1-e^{-t})^{1/2}\, U,
$$
where $H_0$  is {\it any}  initial generalized  Wigner matrix and 
$U$ is an independent standard  GOE matrix, as long as $t \ge N^{-1/4+\delta}$. 
We fix $\delta$ a small number, say, $\delta = 1/8$. 
Now  we construct a generalized  Wigner matrix $H_0$ such that  
the first three moments of  $H_t$ 
match exactly those of the  target  matrix $\hat H$ and the differences between 
the fourth moments of the two ensembles are less than 
$N^{-c}$ for some $c$ positive. This existence of such an initial random variable 
is guaranteed by, say,  Lemma 3.4 of \cite{EYYBernoulli}. By the eigenvector comparison 
theorem, Theorem \ref {t2}, we have proved \eqref{eqn:mainSym} and this concludes our proof of 
Theorem \ref{thm:main}.

\subsection{Proof of Corollary \ref{cor:Gauss}.\ } \label{secInd}
Let $\mathscr{N}=(\mathscr{N}_1,\dots,\mathscr{N}_N)$ be a Gaussian vector with covariance Id. Let $m,\ell\in\mathbb{N}$, $k\in 
\mathbb{T}_N$ and  $\{i_1,\dots,i_\ell\}: = J\subset\llbracket 1,N\rrbracket$.
For $\bq$ such that $q_i=0$ if $ i\not\in J$, consider the polynomial in $\ell$ variables:
$$
Q(q_{i_1},\dots,q_{i_\ell})=\E\left((N|\langle\bq,u_k\rangle|^2)^m\right)-\E\left(|\langle\bq,\mathscr{N}\rangle|^{2m}\right).
$$
From (\ref{eqn:mainSym}), there exists $\e>0$ such that 
$$
\sup_{|q_{i_1}|^2\leq\frac{1}{\ell},\dots,|q_{i_\ell}|^2\leq\frac{1}{\ell}}|Q(q_{i_1},\dots,q_{i_\ell})|\leq \sup_{|q|=1}|Q(q_{i_1},\dots,q_{i_\ell})|\leq N^{-\e},
$$
where, for the first inequality, we note that that the maximum of $Q$ in the unit ball is achieved on the unit sphere. 
Noting $R(q_{i_1})=Q(q_{i_1},\dots,q_{i_\ell})$ with the coefficients of the polynomial $R$ depending on $q_{i_2},\dots,q_{i_\ell}$, the above bound implies that all the coefficients of $R$ are bounded by $C_1 N^{-\e}$ for some universal constant $C_1$ (indeed, one recovers the coefficients of $R$ from its evaluation at $\ell+1$ different points, by inverting a Vandermonde matrix).

By iterating the above bound on the coefficients finitely many times ($\ell$ iterations), we conclude 
that there is a universal constant $C_\ell$ such that all coefficients of $Q$ are bounded by $C_\ell N^{-\e}$.
This means that  for any
$k\in \mathbb{T}_N $ and $J\subset\llbracket 1,N\rrbracket$ with 
$|J|=\ell$,
$$
\left|\E\left(\prod_{\al\in J}\left(\sqrt{N}u_k(\al)\right)^{m_\al}\right)-
\E\left(\prod_{\al\in J}\left(\mathscr{N}_\al\right)^{m_\al}\right)
\right|\leq C\ N^{-\e}
$$
whenever the integer exponents $m_\al$  satisfy  $\sum m_\al=m$. Here $C$  depends only on $m$, not on the choice of $k$ or  $J$. This concludes the proof of (\ref{eqn:corSym}), in the case of a monomial $P$ with even degree. If $P$ is a monomial of odd degree,  (\ref{eqn:corSym}) is trivial: the left hand side vanishes thanks to the uniform phase choice $e^{\ii\omega}$. This concludes the proof of Corollary \ref{cor:Gauss}.

\subsection{\it Proof of Corollary \ref{cor:QUE}.\ } \label{secQUE}
A second moment calculation yields
\begin{multline*}
\E\left(\left(\frac{N}{|a_N|}\langle u_k,a_N u_k\rangle\right)^2\right)=
\frac{1}{|a_N|^2}\E\left(\left(\sum_{\alpha} a_N(\alpha)(N|u_k(\alpha)|^2-1)\right)^2\right)\\
\leq \max_{\alpha\neq\beta}\E\left(\left(N|u_k(\alpha)|^2-1\right)\left(N|u_k(\beta)|^2-1\right)\right)
+\frac{1}{|a_N|}\max_\alpha\E\left(\left(N|u_k(\alpha)|^2-1\right)^2\right).
\end{multline*}
From (\ref{eqn:corSym}),  the first term of the right hand side  is bounded by  $N^{-\e}$
and the second term is bounded by $1/|a_N|$. 
The Markov inequality then allows us to conclude the proof of  Corollary \ref{cor:QUE}.

\section{Finite speed of propagation}

In this section, we prove a finite speed of propagation estimate for the dynamics \eqref{ve}. 
This estimate will be a key ingredient for proving optimal relaxation time for eigenvectors in the bulk. 
Finite speed of propagation   was first proved  in \cite[Section 9.6]{ErdYau2012singlegap} for \eqref{ve} when  the number of particle $n=1$. 
But it requires a level repulsion estimate which is difficult  to prove.  Our estimate requires only the rigidity of eigenvalues 
(which holds with very high probability) and 
the  speed of propagation obtained   is nearly optimal.  Our key observation is  that  we can construct  weight  functions used in the finite speed estimate 
 depending on 
eigenvalues so that the singularities in the equation \eqref{ve} are automatically cancelled by the choices of   these  weight functions.

We will follow the approach of \cite{ErdYau2012singlegap}  by decomposing the dynamics into a long range part and a short range part. 
The long range part can be controlled by a  general argument based on decay estimate; the  main new idea  is  in the  proof of
a finite speed of propagation for the short range dynamics, which is the content of Lemma \ref{FiniteSpeed}.

\subsection{Long and short range dynamics.\ }

We assume that  for some (small) fixed parameter $\xi>0$ there is a constant $C$ such that for any $|i-j|\ge N^{\xi}$ and $0\leq s\leq 1$ 
the quantity $c_{ij}$ defined  in (\ref{eqn:cij}) satisfies the following estimate 
\be\label{far1}
    c_{ij} (s) \le C\ \frac{N}{(i-j)^2}.
\ee 
If $M_N$ is  distributed as a
generalized Wigner matrix,  then
for any $\xi> 0$, \eqref{far1} holds  \cite{ErdYauYin2012Rig} with probability 
$1-e^{-c(\log N)^2}$  for some  $c> 0$. 
In this section $M_N$ is not assumed  to be distributed as a generalized Wigner matrix. Instead,  we  assume that 
$(\ref{far1})$ holds.
\nc

The following cutoff of the dynamics will be useful. Let $1\ll \ell \ll N $  be a parameter to be specified later.
We split the time dependent operator $\mathscr{B}$ defined in \eqref{momentFlotSym} into a short-range and
a long-range part: $\mathscr{B}=\mathscr{S} + \mathscr{L}$, with 
\begin{align}
&(\mathscr{S} f)(\boeta)  =   \sum_{|j-k| \le \ell }   c_{jk}(s) 2 \eta_j (1+ 2 \eta_k) \left(f(\boeta^{j, k}) - f(\boeta)\right),\label{eqn:shortcut}\\
&(\mathscr{L}f) (\boeta) =    \sum_{|j-k| >  \ell }   c_{jk}(s) 2 \eta_j (1+ 2 \eta_k ) \left(f(\boeta^{j,k}) - f(\boeta)\right) .\notag
\end{align}
 Notice that $\mathscr{S}$ and $\mathscr{L}$ are time dependent. Moreover, $\mathscr{S}$ is also reversible
with respect to $\pi$ (the proof of Proposition \ref{prop:rev} applies to any symmetric $c_{ij}$'s). 
Denote by $ \rU_{\mathscr{S}} ( s,   t)$  
 the  semigroup associated with $\mathscr{S}$ from time $s$ to time $t$,  i.e.
$$
\partial_{t} \rU_{\mathscr{S}} (s,t) = \mathscr{S}(t)  \rU_{\mathscr{S}} (s,t)
$$
for any $s\leq t$, and $\rU_{\mathscr{S}} (s,s)=\Id$.
The notation $\rU_{\mathscr{B}}(s,t)$ is analogous.  
In the following lemma, we prove that the short-range dynamics provide a good approximation of the global  dynamics. Lemmas \ref{cut2} 
follows the same proof as in \cite{ErdYau2012singlegap}, where they were shown for $n=1$.

\begin{lemma}\label{cut2}
Suppose that   the coefficients of $\mathscr{B}$
satisfy
\eqref{far1} for some $\xi>0$ and let $\ell\gg N^\xi$.
Suppose that the  initial data  is  the delta function 
at an arbitrary configuration $\boeta$.
Then for any $s\geq 0$ we have
$$
\|\left(\rU_{\mathscr{B}}  (0, s)- \rU_{\mathscr{S}}  (0, s) \right) \delta_{ \boeta} \|_1 \le C\ \frac{N s}{\ell},
$$
where $C$ only depends on $\xi$ (in particular not on $\boeta$).
\end{lemma}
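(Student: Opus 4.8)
The plan is to follow the strategy of \cite{ErdYau2012singlegap} (which treats $n=1$) and to compare the two evolutions by Duhamel's formula, the real content being an $\ell^1\to\ell^1$ operator estimate on the long-range generator $\mathscr{L}$. First I would differentiate $t\mapsto \rU_{\mathscr{S}}(t,s)\,\rU_{\mathscr{B}}(0,t)$ on $[0,s]$ and use $\mathscr{B}(t)=\mathscr{S}(t)+\mathscr{L}(t)$ to obtain the identity
\[
\rU_{\mathscr{B}}(0,s)-\rU_{\mathscr{S}}(0,s)=\int_0^s \rU_{\mathscr{S}}(t,s)\,\mathscr{L}(t)\,\rU_{\mathscr{B}}(0,t)\,\rd t ,
\]
so that, applying this to $\delta_{\boeta}$ and taking $\ell^1$ norms,
\[
\big\|\big(\rU_{\mathscr{B}}(0,s)-\rU_{\mathscr{S}}(0,s)\big)\delta_{\boeta}\big\|_1\le \int_0^s \|\rU_{\mathscr{S}}(t,s)\|_{1\to1}\,\|\mathscr{L}(t)\|_{1\to1}\,\|\rU_{\mathscr{B}}(0,t)\delta_{\boeta}\|_1\,\rd t .
\]
Since $\mathscr{B}(t)$ and $\mathscr{S}(t)$ are generators of continuous-time Markov jump processes on the finite configuration space $\{\boxi:\mathcal{N}(\boxi)=n\}$ with nonnegative rates, the semigroups $\rU_{\mathscr{B}}$ and $\rU_{\mathscr{S}}$ are bounded on $\ell^1$ uniformly in their time arguments (indeed contractions once one normalizes by the reversible weight $\pi^{(s)}$ of Proposition \ref{prop:rev}, whose values lie between two positive constants depending only on $n$); hence $\|\rU_{\mathscr{S}}(t,s)\|_{1\to1}$ and $\|\rU_{\mathscr{B}}(0,t)\delta_{\boeta}\|_1$ are $\OO(1)$ and everything reduces to estimating $\|\mathscr{L}(t)\|_{1\to1}$.

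For the bound on $\mathscr{L}$ the key inputs are: a configuration $\boeta$ with $\mathcal{N}(\boeta)=n$ has at most $n$ occupied sites; the weights $2\eta_j(1+2\eta_k)$ are bounded by a constant depending only on $n$; and \eqref{far1} gives $c_{jk}(t)\le CN(j-k)^{-2}$ for $|j-k|>\ell\gg N^\xi$. Combined with $\sum_{|m|>\ell}m^{-2}\le C\ell^{-1}$, this yields for every fixed $\boeta$ the bound $\sum_{|j-k|>\ell}c_{jk}(t)\,2\eta_j(1+2\eta_k)\le C_n N/\ell$, which controls the diagonal part of $\mathscr{L}$ directly, while the off-diagonal part $\sum_{\boxi,j,k}c_{jk}2\eta_j(1+2\eta_k)|g(\boxi^{j,k})|$ is handled by the change of variables $\boxi\mapsto\boxi^{j,k}$ (inverse move $\boxi^{k,j}$, which requires a particle at $k$), under which $c_{jk}$ is unchanged and $2\eta_j(1+2\eta_k)$ is replaced by a factor still bounded in terms of $n$; the same counting gives the same bound. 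Thus $\|\mathscr{L}(t)\|_{1\to1}\le C_nN/\ell$ uniformly for $t\in[0,1]$, and plugging the three estimates into the Duhamel inequality gives $\|(\rU_{\mathscr{B}}(0,s)-\rU_{\mathscr{S}}(0,s))\delta_{\boeta}\|_1\le C_nNs/\ell$, with $C_n$ depending only on $\xi$ and $n$ and not on the particular configuration $\boeta$.

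I do not expect a genuine obstacle in this lemma: the only points that need a little care are the uniformity of the constants in $\boeta$ (handled by the ``at most $n$ occupied sites'' observation) and the relabelling in the off-diagonal term, both routine. The real difficulty of this section lies elsewhere, in the short-range finite-speed-of-propagation estimate of Lemma \ref{FiniteSpeed}, where one must design the cutoff and the eigenvalue-dependent test weights so that the singular coefficients $c_{jk}$ are absorbed by the geometry of the $\lambda_k$'s.
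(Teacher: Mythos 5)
Your proposal is correct and follows essentially the same route as the paper: Duhamel's formula to compare $\rU_{\mathscr{B}}$ and $\rU_{\mathscr{S}}$, the $\LL^1$ contraction property of both semigroups, and the bound $\|\mathscr{L}(t)\|_{1\to 1}\le C N/\ell$ obtained from \eqref{far1} and $\sum_{|m|>\ell}m^{-2}\le C\ell^{-1}$ (the paper interleaves the two propagators in the opposite order in the Duhamel identity, which is immaterial). Your extra care about the $n$-dependence of the constant and the comparability of the weighted and unweighted $\ell^1$ norms via $\pi^{(s)}$ is a slight refinement of what the paper states tersely, not a different argument.
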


\begin{proof}
By the Duhamel formula we have
$$
\rU_{\mathscr{B}}( 0, s) \delta_{\boeta}= \rU_{\mathscr{S}}  (0, s) \delta_{\boeta} +  \int_0^s \rU_{\mathscr{B}}( s', s)
\mathscr{L} (s')   \rU_{\mathscr{S}} (0,  s') \delta_{\boeta} \rd s'.
$$
Notice that for $\ell \gg N^\xi$
 we can use \eqref{far1} to get    
$$
\|\mathscr{L} f\|_1 \leq \sum_{\boeta}  \sum_{|j-k | \ge \ell }   
c_{jk} \eta_j (1+ 2 \eta_k ) \left(|f(\boeta^{j,k})|+ | f(\boeta)|\right)  \le C\ N \ell ^{-1} \| f\|_1.
$$
Since $\rU_{\mathscr{B}}$ and $\rU_{\mathscr{S}}$ are contractions in ${\rm L}^1$, this yields
$$
\int_0^s  \Lnorm  1 {   \rU_{\mathscr{B}} ( s',s ) \mathscr{L}(s') \rU_{\mathscr{S}} (0,  s')  \delta_{\boeta} }  \rd s' 
 \le 
C\ N\ell^{-1}  \int_0^s   \Lnorm  1 {\delta_{\boeta}}  \rd s'\leq C\ \frac{Ns}{\ell},
$$
which concludes the proof.
\end{proof}

\subsection{Finite speed of propagation for the short range dynamics. }

Suppose that $\boeta$ is a  configuration with $n$ particles. We  denote 
the particles  in nondecreasing order  by $\bx(\boeta) = ( x_1(\boeta), \ldots,     x_n(\boeta))$ with $\al N\leq x_1 \leq\dots\leq x_n \leq (1-\al)N$. 
We will drop the dependence on $\boeta$ and simply use $( x_1, \ldots,     x_n)$.
%
%
In the same way, we also denote the  configuration $\boxi$ by $\by$ with  $1\leq y_1 \leq \dots\leq y_n\leq N$ where we have dropped the dependence of $\boxi$ in $y_\alpha(\boxi)$. 
This convention will be followed for the rest of this paper. 
%

We  define the following distance 
on the set of configurations with $n$ particles: 
\begin{equation}\label{order}
d(\boeta, \boxi) = \sum_{\alpha=1}^n |x_\alpha-y_\alpha|
=
\min_{\sigma\in\mathscr{S}_n} \sum_{\alpha=1}^n |x_\alpha-y_{\sigma(\alpha)}|.
\end{equation}
For the second equality, observe that for any $x\leq y$ and $a\leq b$, we have $|x-a|+|y-b|\leq |x-b|+|y-a|$.

Before stating our finite speed result, we also need the notation
$r_s(\boeta,\boxi)= (\rU_{\mathscr{S}} (0,  s)  \delta_{\boeta})(\boxi)$.

\begin{lemma}\label{FiniteSpeed} 
Suppose that the eigenvalue $\bla$ satisfies the condition \eqref{eqn:condition2} with exponent $\omega$ such that $N^\omega\ll\ell$.
Let $\al,\e>0$  
and choose $\ell \geq  Nt $ for the short range dynamics cutoff. 
\begin{enumerate}[(i)]
\item
Uniformly in $\boeta$ supported on $\llbracket\al N,(1-\al)N\rrbracket$ and $t>0$, if $d(\boeta,\boxi)\geq N^\e\ell$, we have
\begin{equation}\label{eqn:finitebulk}
\mathbb{P}\left(r_s(\boeta,\boxi)>e^{-N^{\e/2}}\right)=\OO\left(N^{-D}\right)
\end{equation}
for any $D>0$. Here $\mathbb{P}$ denotes integration with respect to the Dyson Brownian Motion.
\item Uniformly in $\boeta$ supported on $\llbracket 1,N\rrbracket$ and $t>0$, if $d(\boeta,\boxi)\geq N^{\frac{1}{3}+\e}\ell^{\frac{2}{3}}$, the finite speed estimate (\ref{eqn:finitebulk}) holds.
\end{enumerate}
\end{lemma}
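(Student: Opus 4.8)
The plan is to prove the finite speed of propagation for the short-range dynamics $\mathscr{S}$ by constructing a family of exponential weight functions, indexed by the target configuration, which are (approximate) supersolutions of the evolution equation $\partial_t \rU_{\mathscr{S}} = \mathscr{S}(t)\rU_{\mathscr{S}}$. Concretely, for a fixed configuration $\boxi$ with ordered particles $\by = (y_1,\dots,y_n)$, introduce a weight of the form $v_s(\boeta) = \exp\big(\nu\, \phi_s(\boeta)\big)$ where $\phi_s(\boeta)$ is built from the ordered particle positions $\bx(\boeta)$ of $\boeta$ and measures a rescaled distance to $\by$. The crucial point — and this is the author's key observation — is that one should not use the combinatorial distance $d(\boeta,\boxi)$ directly but rather a distance expressed through the \emph{classical eigenvalue locations} $\gamma_{x_\alpha}$, or equivalently through the semicircle quantile function, so that the singular jump rates $c_{jk}(s) = (N(\lambda_j-\lambda_k)^2(s))^{-1}$ are tamed: under rigidity \eqref{eqn:condition2}, $N|\lambda_j - \lambda_k| \gtrsim |j-k|^{1/3}(j+k)^{\ldots}$ type bounds hold in the bulk, and the weight should be chosen so that a jump from site $j$ to a neighboring site $k$ (with $|j-k|\le\ell$) changes $\phi_s$ by an amount comparable to $|\lambda_j-\lambda_k|$ up to logarithmic factors. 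Then the "bad" factor $c_{jk}$ in $\mathscr{S}$ is compensated by a "good" factor coming from the smallness of the weight increment.

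The mechanism is the standard Gronwall/entropy argument for finite speed: one shows that $h_s(\boeta) := v_s(\boeta)\, r_s(\boeta,\boxi)$ satisfies a differential inequality of the form $\partial_s \sum_{\boeta}\pi(\boeta) h_s(\boeta)^2 \leq C(\nu,\ell,N)\sum_{\boeta}\pi(\boeta)h_s(\boeta)^2$, or better yet that the weighted $\ell^1$ or $\ell^2$ norm of $h_s$ stays controlled, using reversibility of $\mathscr{S}$ with respect to $\pi^{(s)}$ from Proposition \ref{prop:rev}. The computation reduces, after expanding $\mathscr{S}$ acting on the weighted function, to controlling terms like $c_{jk}(s)\,\eta_j(1+2\eta_k)\,\big(e^{\nu(\phi_s(\boeta^{j,k})-\phi_s(\boeta))}-1\big)$; choosing $\nu$ of order a negative power of $N$ times $\ell^{-1}$ (for part (i)) or of order $\ell^{-2/3}N^{-1/3}$ (for part (ii)) makes the exponent small, so $e^x - 1 \approx x$, and summing against $c_{jk}$ over $|j-k|\le\ell$ with the rigidity bound keeps everything of size $\OO(N^{\text{something}})$ over the relevant time interval $s \le t$ with $\ell \gtrsim Nt$. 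Then, since $r_s(\boeta,\boxi)$ integrates to $1$ and is nonnegative, $v_s(\boeta) r_s(\boeta,\boxi)$ being controlled in norm while $v_s(\boeta) = e^{\nu\phi_s(\boeta)}$ is exponentially large (of order $e^{N^{\e/2}}$) whenever $d(\boeta,\boxi)\ge N^\e\ell$ forces $r_s(\boeta,\boxi) \le e^{-N^{\e/2}}$ with overwhelming probability — the probability statement coming from the fact that the whole argument is run on the event that \eqref{eqn:condition2} holds, which has probability $1-\OO(N^{-D})$ by rigidity.

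For part (ii), where $\boeta$ is allowed to be supported on all of $\llbracket 1,N\rrbracket$ including near the edge, the eigenvalue spacings near the edge are of order $N^{-2/3}$ rather than $N^{-1}$, which is exactly why the required separation degrades from $N^\e\ell$ to $N^{1/3+\e}\ell^{2/3}$: the weight increment per unit combinatorial step near the edge is $\sim N^{-2/3}$-ish in eigenvalue units, so to accumulate a weight of order $N^{\e/2}$ one needs more combinatorial distance, and the bookkeeping with the quantiles $\gamma_k$ near the spectral edge (where $\gamma_{k+1}-\gamma_k \sim N^{-2/3}\hat k^{-1/3}$) produces the $\ell^{2/3}$ scaling. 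The plan is to carry out (i) first with the bulk rigidity bound and then repeat verbatim with the edge-corrected version of the spacing estimate for (ii).

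The main obstacle I anticipate is designing the weight function $\phi_s$ precisely enough that the compensation between $c_{jk}(s)$ and the weight increment is \emph{exact} at the level of the singular part — i.e., that the choice of $\phi_s$ depending on the (random, time-dependent) eigenvalues genuinely cancels the $(\lambda_j-\lambda_k)^{-2}$ singularity rather than merely bounding it, so that the argument does not lose powers of $N$ from near-degenerate eigenvalue pairs and does not need a level repulsion input. Getting the telescoping/convexity estimate for $\phi_s(\boeta^{j,k}) - \phi_s(\boeta)$ right for multi-particle configurations (where moving a particle past another reorders the $x_\alpha$'s) and checking that the reordering only helps, as in the remark after \eqref{order}, is the delicate combinatorial part; everything else is a routine Duhamel-plus-Gronwall estimate combined with Lemma \ref{cut2} to pass back from $\mathscr{S}$ to $\mathscr{B}$.
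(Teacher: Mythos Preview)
Your overall architecture is right: exponential weights, weighted $\ell^2$ energy $X_s=\sum_{\boxi}\pi(\boxi)v_s(\boxi)^2$, Gronwall, then Markov on the rigidity event. You also correctly flag that the weight increment must scale like $|\lambda_j-\lambda_k|$ so that $c_{jk}\cdot(\text{increment})^2$ is harmless without level repulsion. But your proposal oscillates between two incompatible choices for $\phi_s$, and the one you lean toward (``a distance expressed through the classical eigenvalue locations $\gamma_{x_\alpha}$'') does \emph{not} work. With a $\gamma$-based weight the error term contains $c_{jk}\,\nu^2(\gamma_j-\gamma_k)^2=\nu^2(\gamma_j-\gamma_k)^2/(N(\lambda_j-\lambda_k)^2)$, and for small $|j-k|$ rigidity gives no lower bound on $|\lambda_j-\lambda_k|$; you would be right back to needing level repulsion. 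The paper's weight is $\psi_s(\boxi)=\sum_\alpha\psi_{x_\alpha}(\lambda_{y_\alpha}(s\wedge\tau))$ with $\psi_i$ a smoothed version of $|x-\gamma_i|$: it depends on the \emph{actual} eigenvalues $\lambda_{y_\alpha}(s)$, which is what makes $|\psi_s(\boxi^{kj})-\psi_s(\boxi)|\le C|\lambda_j-\lambda_k|$ exact and kills the $c_{jk}$ singularity.

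This choice has a consequence you do not anticipate: the weight is now a stochastic process in $s$, so $\rd X_s$ must be computed via It\^o's formula, and the DBM drift $\rd\lambda_{y_\alpha}=\cdots+\frac{1}{N}\sum_{j\neq y_\alpha}\frac{\rd s}{\lambda_{y_\alpha}-\lambda_j}$ produces a new term $\frac{\nu}{N}\sum_\alpha\psi'_{x_\alpha}(\lambda_{y_\alpha})\sum_{j}\frac{1}{\lambda_{y_\alpha}-\lambda_j}$ inside $\rd X_s$. This is a \emph{first-order} singularity, not second-order like $c_{jk}$, and it is not absorbed by the argument you sketch. The paper's ``Third step'' handles it by symmetrizing in $(i,j)$, exploiting the monotonicity $a\le b\Rightarrow\psi'_a\ge\psi'_b$ to extract a factor $v(\boxi)^2-v(\bar\boxi)^2$ (where $\bar\boxi$ swaps the $i$- and $j$-site occupations), telescoping this into nearest-particle moves, and then using Schwarz with a parameter $M\sim\nu^{-1}$ so that the dangerous piece is absorbed into the dissipative Dirichlet form coming from reversibility. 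This symmetrization is the technical heart of the proof and is not the ``routine Duhamel-plus-Gronwall'' you describe; your plan as written has a genuine gap here. (Two smaller points: the correct scaling is $\nu=N/\ell$ in the bulk and $\nu=(N/\ell)^{2/3}$ at the edge, so $\nu$ is large, not ``a negative power of $N$''; and the roles of $\boeta$ and $\boxi$ are swapped relative to the paper --- $\boeta$ is the fixed initial configuration and one sums over $\boxi$.)
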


\begin{proof}
We first consider the case (i) corresponding to $\boeta$ supported in the bulk, but 
the reader may want to  read first the proof of (ii), written for the simpler case $n=1$
for the sake of simplicity.\\

\noindent{\it First step: definitions and dynamics.} 
Let  $\nu=N/\ell$ and $\kappa>0$ be a  fixed parameter such that $-2+\kappa<\gamma_{\alpha N}$.
For any $1\leq i\leq N$ and $x\in \mathbb{R}$, let $d_i(x)=|x-\gamma_{i}|$. Let $g_i(x)=d_i(2-\kappa)$ if $x>2-\kappa$, 
$g_i(x)=d(-2+\kappa)$ if $x<-2+\kappa$ and $g_i(x)=d_i(x)$ if $-2+\kappa\leq x\leq 2-\kappa$.  Take $\chi$ a smooth, nonnegative, compactly supported function with $\int\chi=1$, and $\psi_i(x)=\int g_i(x-y)\nu\chi(\nu y)\rd y$.
Then $\psi_i$ is smooth,
$\|\psi_i'\|_\infty\leq1$ and $\|\psi_i''\|_\infty\leq \nu$.

Moreover, consider the stopping time 
\begin{equation}
\label{eqn:stop}
\tau=\inf\left\{s\geq 0\mid \exists k\in\llbracket 1,N\rrbracket:|\la_k(s)-\gamma_k|>N^{-\frac{2}{3}}(\hat k)^{-\frac{1}{3}} \ell \right\}.
\end{equation}
For any configuration $\boxi$ with $n$ particles we define
\begin{equation}\label{eqn:order}
\psi_s(\boxi)=\sum_{\alpha=1}^n   \psi_{x_\alpha}(\la_{y_\alpha}(s\wedge \tau))=\min_{\sigma\in\mathscr{S}_n}\sum_{\alpha=1}^n\psi_{x_\alpha}(\la_{y_{\sigma(\alpha)}}(s\wedge \tau)),
\end{equation}
similarly to (\ref{order}). For the second equality, observe that 
if $\alpha \leq \beta$ and $a\leq b$, then $\psi_\alpha(a)+\psi_\beta(b)\leq \psi_\alpha(b)+\psi_\beta(a)$ (the function $a\mapsto\psi_\alpha(a)-\psi_\beta(a)$ is nondecreasing).

We define
$$\phi_{s}(\boxi)=e^{\nu\psi_s(\boxi)}, \ \ v_s(\boxi)=\phi_s(\boxi)r_{s\wedge \tau}(\boeta,\boxi).$$
Then we have (we omit the $s$ index)
\begin{align*}
\rd v(\boxi)
=& \sum_{ |j-k| \le \ell}2\xi_k(1+2\xi_j) c_{jk} \left((v(\boxi^{kj}) - v(\boxi))+ \left( \frac {  \phi(\boxi)}  {  \phi(\boxi^{kj})} - 1 \right)  v(\boxi^{kj}) \right) \rd (s\wedge\tau)  +  \left(\rd  \phi(\boxi) \right)   r(\boeta,\boxi)\\
\frac{\rd \phi(\boxi)}{\phi(\boxi)} 
=& \sum_{ \alpha=1}^n\left(\nu \psi_{x_\alpha}'(\la_{y_\alpha})\frac{\rd B_{y_\alpha }(s\wedge\tau)}{\sqrt{N}}+\nu \frac{\psi_{x_\alpha}'(\la_{y_\alpha })}{N}\sum_{j\neq y_\alpha}\frac{\rd (s\wedge\tau)}{\la_{y_\alpha}-\la_j}\right.\\
&\left.+  c_1\frac{\nu}{2N}\psi_{x_\alpha}''(\la_{y_\alpha})\rd (s\wedge\tau)+c_2\frac{\nu^2}{2N}\psi_{x_\alpha}'(\la_{y_\alpha})^2\rd (s\wedge\tau)\right)
\end{align*}
The coefficients $c_1$ and $c_2$ are  non-random positive combinatorial factors  
depending  on the locations of $i$, $\boeta,\boxi$, \nc but we will only need that they are uniformly bounded in $N$. 
We will adopt the convention to use  indices $1 \le \alpha, \beta \le n, 1 \le i, j, k \le N$.
We define
$$
X_s=\sum_{\boxi}\pi(\boxi) v_s(\boxi)^2,
$$
where $\pi=\pi^{(s)}$ is the reversible measure defined in (\ref{eqn:weight}) for the symmetric eigenvector moment flow (which is also reversible 
w.r.t its short-range cutoff version).
Then
\begin{align}
\rd X_s=&2\sum_{\boxi}\pi(\boxi)v(\boxi)\sum_{ |j-k| \le \ell}2\xi_k(1+2\xi_j) c_{jk} \left((v(\boxi^{kj}) - v(\boxi))+ \left(   \frac {  \phi(\boxi)}  {  \phi(\boxi^{kj})}  - 1 \right)  v(\boxi^{kj}) \right) \rd (s\wedge\tau)\label{1stterm}\\
&+2\sum_{\boxi}\pi(\boxi)v(\boxi) \left(\rd  \phi(\boxi) \right)   r(\boeta,\boxi)\label{2ndterm}\\
&+\sum_{\boxi}\pi(\boxi)\rd\langle v(\boxi)\rangle_{s\wedge\tau}\label{3rdterm}.
\end{align}

\noindent{\it Second step: bound on (\ref{1stterm}) and (\ref{3rdterm}).}
Using reversibility with respect to $\pi$, the first term  can be written
\begin{align}
(\ref{1stterm})=&-\sum_{\boxi} \pi(\boxi) \sum_{|j-k|\leq \ell}2\xi_k(1+2\xi_j)c_{jk}(v(\boxi^{ k j})-v(\boxi)))^2\rd (s\wedge\tau)\label{good1st}\\
&+\label{good2nd}
\sum_{\boxi} \pi(\boxi) \sum_{|j-k|\leq \ell}2\xi_k(1+2\xi_j)c_{jk}
\left(\frac{\phi(\boxi^{kj})}{\phi(\boxi)}+\frac{\phi(\boxi)}{\phi(\boxi^{kj})}-2\right)v(\boxi)v(\boxi^{ kj})\rd (s\wedge\tau).
\end{align}
Here the equality  (\ref{good1st}) is a direct application of the reversibility property, while (\ref{good2nd}) also follows from the reversibility 
as follows. Notice that 
\be
\sum_{\boxi}\pi(\boxi)v(\boxi)\sum_{ |j-k| \le \ell}2\xi_k(1+2\xi_j) c_{jk}    \frac {  \phi(\boxi)}  {  \phi(\boxi^{kj})} \nc   v(\boxi^{kj}) = \langle  g,   
\mathscr{S}   r  \rangle_\pi
+ \langle  g,   r  \rangle_\pi, \quad g = \phi^2 r
\ee
One can check that (\ref{good2nd}) follows from $\langle  g,  \mathscr{S}  r  \rangle_\pi = \langle   \mathscr{S}  g,      r  \rangle_\pi$. 

We now estimate  the term $\frac{\phi(\boxi^{kj})}{\phi(\boxi)}+\frac{\phi(\boxi)}{\phi(\boxi^{kj})}-2$ in 
(\ref{good2nd}).
 If it is  nonzero  (and  we assume first that  $j<k$)
 then  there exists $1\leq p< q\leq n$ such that  
 $y_p\leq j< y_{p+1}$, $y_{q-1}<k=y_{q}$ (recall $y_q= y_q(\boxi)$) and
\begin{align}
|\psi_s(\boxi^{kj})-  \psi_s(\boxi)|&=|(\psi_{x_{p+1}}(\la_j)+\psi_{x_{p+2}}(\la_{y_{p+1}}))+\dots+\psi_{x_{q}}(\la_{y_{q-1}})\notag
-(\psi_{x_{p+1}}(\la_{y_{p+1}})+\dots+\psi_{x_{q}}(\la_{y_{q}}))|\\
&\leq 
\sum_{\alpha=p+1}^{q}\left|
\psi_{x_{\alpha}}( \la_{y_{\alpha-1}\vee j})-\notag
\psi_{x_{\alpha}}(\la_{y_\alpha})\right|\\
&\leq C\min(|\la_j(s\wedge\tau)-\la_k(s\wedge\tau)|,\nu^{-1}).\label{eqn:estimate}
\end{align}
Here we have used the definition (\ref{eqn:order}) in the first equality and
for the second inequality we used:  (i) $|\psi_{x_\alpha}'|_\infty\leq 1$, (ii) $\psi_{x_\alpha}$ is flat close to the edges and (iii) if $|k-j|\leq \ell$ are bulk indices, then $|\la_k(s\wedge\tau)-\la_j(s\wedge\tau)|\leq C\nu^{-1}$
by  definition of the stopping time $\tau$. 
Note that \eqref{eqn:estimate} also holds if $j>k$, 
with a proof being identical to the case $j< k$ up to notations. 

Thanks to \eqref{eqn:estimate}, we obtain \nc
$$
\left|\frac{\phi(\boxi^{kj})}{\phi(\boxi)}+\frac{\phi(\boxi)}{\phi(\boxi^{kj})}-2\right|\leq C\ \nu^2|\la_k-\la_j|^2.
$$
This allows us to bound
$$
(\ref{good2nd})\leq C\ \frac{\nu^2}{N}\sum_{\boxi}\pi(\boxi)\sum_{k:\xi_k>0}\sum_{|j-k|\leq \ell}\nu(\boxi)\nu(\boxi^{kj})\rd (s\wedge \tau)\leq
\frac{\nu^2\ell}{N}e^{\nu\frac{\ell}{N}}
X_s\rd (s\wedge\tau).
$$
Moreover, the bracket term (\ref{3rdterm}) is easily bounded by
$$
(\ref{3rdterm})\leq C\ \sum_{\boxi}\pi(\boxi)v(\boxi)^2\sum_{\alpha=1}^n \nu^2\frac{\psi_{x_\alpha}'(\la_{y_\alpha})^2}{N}\rd (s\wedge\tau)
\leq
C\ \frac{\nu^2}{N} X_s \rd (s\wedge\tau).
$$

\noindent{\it Third step: bound on (\ref{2ndterm}).}
We can bound  (\ref{2ndterm}) by
\begin{align}
\notag&2\sum_{\boxi}\pi(\boxi)v(\boxi)^2\sum_{\alpha=1}^n\left(
\nu \frac{\psi_{x_\alpha}'(\la_{y_\alpha})}{N}\sum_{j\neq y_\alpha}\frac{1}{\la_{x_\alpha}-\la_j}
+c_1\frac{\nu}{2N}\psi_{x_\alpha}''(\la_{\boxi(i)})+c_2\frac{\nu^2}{2N}\psi_{x_\alpha}'(\la_{\boxi(i)})^2
\right)\rd (s\wedge\tau)\\
\leq&\notag
\ C\ \frac{\nu^2}{N} X_s\rd (s\wedge\tau)+
2\sum_{\boxi}\pi(\boxi)v(\boxi)^2\sum_{1\leq \alpha\leq n,|j- y_\alpha|> \ell}
\nu\frac{|\psi_{x_\alpha}'(\la_{y_\alpha})|}{N}\frac{\rd (s\wedge\tau)}{|\la_{y_\alpha}-\la_j
|}\\
&+\label{eqn:inter}
2\sum_{\boxi}\pi(\boxi)v(\boxi)^2\sum_{1\leq \alpha \leq n,| j-y_\alpha|\leq \ell}
\frac{\nu}{N}\frac{\psi_{x_\alpha}'(\la_{y_\alpha})}{\la_{y_\alpha}-\la_{j}}\rd (s\wedge\tau).
\end{align}
As rigidity holds when $\tau>s$, the above sum over $|j-y_\alpha|> \ell$ is at most
$C\ \nu(\log N) \rd (s\wedge\tau)$. 

To bound the contribution of $|j-y_\alpha|\leq \ell$,  we symmetrize the summands of (\ref{eqn:inter}) into
\begin{multline}
 \frac{\nu}{N}\sum_{i< j: |i-j|\leq \ell}\frac{1}{\la_i-\la_j}
\sum_{\boxi}\pi(\boxi)v(\boxi)^2 \sum_{\alpha: y_\alpha=i}\psi'_{x_\alpha}(\la_i)
+\frac{\nu}{N}\sum_{i>  j: |i-j |\leq \ell}\frac{1}{\la_i-\la_j}
\sum_{\boxi}\pi(\boxi)v(\boxi)^2 \sum_{\alpha: y_\alpha=i} \psi'_{x_\alpha}(\la_i)
\\
=  \frac{\nu}{N}\sum_{i< j: |i-j|\leq \ell}\frac{1}{\la_i-\la_j}
\sum_{\boxi}\pi(\boxi)v(\boxi)^2\left(\sum_{\alpha: y_\alpha=i}\psi'_{x_\alpha}(\la_i)
-
\sum_{i:y_\alpha=j}\psi'_{x_\alpha}(\la_j)
\right)\\
\leq 
 \frac{\nu}{N}\sum_{i< j: |i-j|\leq \ell}\frac{1}{\la_i-\la_j}
\sum_{\boxi}\pi(\boxi)v(\boxi)^2\left(\sum_{\alpha: y_\alpha=i}\psi'_{x_\alpha}(\la_i)
-
\sum_{i:y_\alpha=j}\psi'_{x_\alpha}(\la_i) \right)
+
C\, \nu^2\frac{\ell}{N}X_s,\label{eqn:symm}
\end{multline}  
where we just replaced $\psi'_{x_\alpha}(\la_j)$ with $\psi'_{x_\alpha}(\la_i)$, up to an error at most 
$C\, \nu^2\frac{\ell}{N}X_s$, obtained by using $|\psi'_{x_\alpha}(\la_j)-\psi'_{x_\alpha}(\la_i)|/|\la_j-\la_i|\leq \|\psi'_{x_\alpha}\|_\infty\leq\nu$.  
In all the following bounds, we
consider $i$ and 
$j$ as fixed indices. We also introduce the following subsets of configurations with $n$ particles, for any $0\leq q\leq p\leq n$:
$$
\mathcal{A}_{p}=\{\boxi:\xi_i+\xi_j=p\},\
\mathcal{A}_{p,q}=\{\boxi\in\mathcal{A}_p:\xi_i=q\}.
$$
Denote $\bar\boxi$  the configuration exchanging all particles from sites $i$ and $j$, i.e. $\bar\xi_i=\xi_j$, $\bar\xi_j=\xi_i$ and $\bar\xi_k=\xi_k$ if $k\neq i, j$. 
Using  $\pi(\boxi)=\pi(\bar\boxi)$, we can bound  the sum over $\boxi$ in (\ref{eqn:symm}) by 
\begin{multline}
\frac{1}{\la_i-\la_j}\sum_{p=0}^n\sum_{q=0}^p\sum_{\boxi\in\mathcal{A}_{p,q}}\pi(\boxi)v(\boxi)^2
\left( \sum_{\alpha: y_\alpha=i}\psi'_{x_\alpha}(\la_i)
-
\sum_{\alpha:y_\alpha=j}\psi'_{x_\alpha}(\la_i) \right)\\
=
\frac{1}{\la_i-\la_j}\sum_{p=0}^n\sum_{q=0}^{\lfloor p/2\rfloor}c_q
\sum_{\boxi\in\mathcal{A}_{p,q}}\pi(\boxi)  
\Bigg [   v(\boxi)^2  
\left(\sum_{\alpha: y_\alpha=i}\psi'_{x_\alpha}(\la_i)  
-
\sum_{\alpha:y_\alpha=j}\psi'_{x_\alpha}(\la_i) \right)  \\
-
v(\bar\boxi)^2\left(
\sum_{\alpha:  \bar y_\alpha=j}\psi'_{x_\alpha}(\la_i)
-
\sum_{\alpha:  \bar y_\alpha=i}\psi'_{x_\alpha}(\la_i)
\right)
\Bigg],
\label{eqn:interm1}
\end{multline}
\nc
where  the constant $c_q=0$  if $p$ is even and $q=p/2$, and  $c_q=1$ otherwise. 
Remember that for any $a\leq b$, we have $\psi'_{a}\geq \psi'_{b}$. This implies that 
$
\sum_{\alpha: y_\alpha=i}\psi'_{x_\alpha}(\la_i)
\geq 
\sum_{\alpha: \bar y_\alpha=j}\psi'_{x_\alpha}(\la_i)
$ and
$
\sum_{\alpha:  \bar y_\alpha=i}\psi'_{x_\alpha}(\la_i)
\geq \sum_{\alpha:y_\alpha=j}\psi'_{x_\alpha}(\la_i)$
so that
\begin{equation}\label{eqn:interm2}
\sum_{\alpha: y_\alpha=i}\psi'_{x_\alpha}(\la_i)
-
\sum_{\alpha:y_\alpha=j}\psi'_{x_\alpha}(\la_i) 
\ge
\sum_{\alpha:  \bar y_\alpha=j}\psi'_{x_\alpha}(\la_i)
-
\sum_{\alpha:  \bar y_\alpha=i}\psi'_{x_\alpha}(\la_i).
\end{equation}
Equations (\ref{eqn:interm1}) and (\ref{eqn:interm2}) together with $\la_i<\la_j$ give
\begin{multline*}
\frac{1}{\la_i-\la_j}\sum_{p=0}^n\sum_{q=0}^p\sum_{\boxi\in\mathcal{A}_{p,q}}\pi(\boxi)v(\boxi)^2
\left( \sum_{\alpha: y_\alpha=i}\psi'_{x_\alpha}(\la_i)
-
\sum_{\alpha:y_\alpha=j}\psi'_{x_\alpha}(\la_i) \right)
\\
\leq 
\frac{C}{\la_i-\la_j}\sum_{p=0}^n\sum_{q=0}^{\lfloor p/2\rfloor}\sum_{\boxi\in\mathcal{A}_{p,q}}\pi(\boxi)\left(
v(\boxi)^2-v(\bar\boxi)^2 \right)
\left( \sum_{\alpha: y_\alpha=i}\psi'_{x_\alpha}(\la_i)
-
\sum_{\alpha:y_\alpha=j}\psi'_{x_\alpha}(\la_i) \right) \\
\leq
\frac{C}{|\la_i-\la_j|}\sum_{\boxi}  \pi(\boxi) \left|
v(\boxi)^2-v(\bar\boxi)^2 \right|.
\end{multline*}
where we used, in the second inequality, $\|\psi_{x_\alpha}'\|_\infty\leq 1$.
Note that transforming $\boxi$ into $\bar\boxi$ can be achieved by transferring a particle for $i$ to $j$ (or $j$ to $i$) one by one at most $n$ times, so that
\begin{multline*}
\frac{1}{|\la_i-\la_j|}\sum_{\boxi}  \pi(\boxi)  \left|
v(\boxi)^2-v(\bar\boxi)^2 \right|\leq
\frac{C}{|\la_i-\la_j|} \sum_{\boxi}  \pi(\boxi) \left(|v(\boxi)^2-v(\boxi^{ij})^2|+|v(\boxi)^2-v(\boxi^{ji})^2|\right)\\
\leq
CM \sum_{\boxi}   \pi(\boxi)  \frac{(v(\boxi)-v(\boxi^{ij}))^2+(v(\boxi)-v(\boxi^{ji}))^2}{(\la_i-\la_j)^2}+
CM^{-1}\sum_{\boxi}  \pi(\boxi) \left((v(\boxi)+v(\boxi)^{ij})^2+(v(\boxi)+v(\boxi)^{ji})^2\right)
\end{multline*}
for any $M>0$.  
We finally proved that the drift term from (\ref{2ndterm}) is bounded above by 
$$
C\ M \frac{\nu}{N}\sum_{\boxi}  \pi(\boxi) \sum_{|i-j|\leq \ell}\frac{(v(\boxi)-v(\boxi^{ij}))^2}{(\la_i-\la_j)^2}+
C M^{-1}\frac{\nu}{N}\sum_{\boxi}  \pi(\boxi) \sum_{|i-j|\leq \ell}(v(\boxi)+v(\boxi^{ij}))^2+C(\nu\log N+\nu^2\frac{\ell}{N})X_s.
$$
We chose $M=c\nu^{-1}$ with $c$ small enough so that the first sum above can be absorbed into the dissipative term (\ref{good1st}). 
The second sum above is then bounded by $\frac{\nu^2\ell}{N}e^{\nu\frac{\ell}{N}}X_s$.\\

\noindent{\it Fourth step: conclusion.}
All together, the above estimates give
$$
\frac{\rd}{\rd s}\E(X_s)\leq C(\nu\log N+ \frac{\nu^2\ell}{N})e^{\nu\frac{\ell}{N}}\E(X_s),
$$
so for our choice $\nu=N/\ell$ we have $\E(X_s)\leq Ce^{C \frac{N}{\ell}(\log N)s}$. In particular, 
$$
\E(e^{2\frac{N}{\ell}\sum_{\alpha=1}^n\psi_{x_\alpha}(\la_{y_\alpha}(s\wedge \tau))}r_{t\wedge\tau}(\boeta,\boxi)^2)\leq C e^{\frac{N}{\ell}(\log N)t}.
$$
If $d(\boxi,\boeta)\geq N^\e\ell$, then $\sum_{\alpha=1}^n\psi_{x_\alpha}(\la_{y_\alpha}(s\wedge \tau))>\ell\frac{N^\e}{N}$, so that
(remember $\ell\geq  Nt$)
$$
\E\left(r_{t\wedge\tau}(\boeta,\boxi)^2\right)\leq C e^{-c N^\e}.
$$
One concludes using Markov's inequality and $\P(\tau<t)\leq N^{-D}$.\\

\noindent   
The proof of (ii) proceeds in  exactly the same way with  only two differences: 
1.  $g_i(x)=d_i(x)$ for any $x\in\mathbb{R}$ (in particular $\psi_i$ is not made flat near  the edges);
2. $\nu$ is chosen to be  $\nu=(N/\ell)^{2/3}$. Since the full proof for edge case 
is parallel to the bulk case,  we give all details hereafter only for $n=1$.

Let  $\nu=(N/\ell)^{2/3}$. Assume that the initial configuration $\boeta$ consists in one particle at $k_0\in\llbracket 1,N\rrbracket$.
Let $d(x)=|x-\gamma_{k_0}|$ and $\chi$ as in the proof of (i). Define $\psi(x)=\int d(x-y)\nu\chi(\nu y)\rd y$ and
$$
\psi_s(k)=\psi(\la_k(s\wedge\tau)),\ \phi_s(k)=e^{\nu\psi_s(k)},\ v_s(k)=\phi_s(k)r_{s\wedge\tau}(k_0,k),
$$
where $\tau$ is defined by (\ref{eqn:stop}). Then by definition of the dynamics and the It\^o formula, we have  (here we drop  the time parameter $s$ whenever it is obvious)
\begin{align*}
\rd v(k)
=& 2\sum_{ |j-k| \le \ell} c_{jk} \left((v(j) - v(k))+ \left( \frac {  \phi(k)}  {  \phi(j)} - 1 \right)  v(j) \right) \rd (s\wedge\tau)  +  \left(\rd  \phi(k) \right)   r(k_0,k)\\
\frac{\rd \phi(k)}{\phi(k)} 
=& \nu \psi'(\la_{k})\frac{\rd B_{k}(s\wedge\tau)}{\sqrt{N}}+\left(\nu \frac{\psi'(\la_{k})}{N}\sum_{j\neq k}\frac{1}{\la_{k}-\la_j}
+ \frac{\nu}{2N}\psi''(\la_{k})+\frac{\nu^2}{2N}\psi'(\la_{k})^2\right)\rd (s\wedge\tau)
\end{align*}
Thus if we define $X_s=\sum_{k=1}^Nv_s(k)^2$, we obtain
\begin{align}
\rd X_s=&-2\sum_{|j-k|\leq \ell}c_{jk}(v(j)-v(k))^2\rd(s\wedge \tau)\label{new1}\\
&+2\sum_{|j-k|\leq \ell}c_{jk}\left(\frac{\phi(k)}{\phi(j)}+\frac{\phi(j)}{\phi(k)}-2\right)v(j)v(k)\rd(s\wedge \tau)\label{new2}\\
&+\frac{\nu}{N}\sum_k \psi''(\la_k)v(k)^2\rd(s\wedge \tau)\label{new3}\\
&+\frac{\nu^2}{N}\sum_k \psi'(\la_k)^2v(k)^2\rd(s\wedge \tau)\label{new4}\\
&+2\frac{\nu}{N}\sum_{j<k}\frac{\psi'(\la_j)v(j)^2-\psi'(\la_k)v(k)^2}{\la_j-\la_k}\rd(s\wedge\tau)\label{new5}\\
&+2\nu\sum_k\frac{\rd B_k(s\wedge\tau)}{\sqrt{N}}\psi'(\la_k)v(k)^2.\notag
\end{align}
From $\|\phi'\|_\infty\leq 1$, the definition of $\tau$ and $\nu$, we have $\nu|\phi(\la_k)-\phi(\la_j|\leq 
\nu|\la_k-\la_j|\leq C\nu|\gamma_\ell+2|=\OO(1)$ (this is where we critically used that $\nu\leq (N/\ell)^{2/3}$), so that 
$\left|\frac{\phi(k)}{\phi(j)}+\frac{\phi(j)}{\phi(k)}-2\right|\leq C\ \nu^2|\la_k-\la_j|^2$. One concludes easily that 
$(\ref{new2})$ is bounded above by  $C\nu^2\frac{\ell}{N}\rd(s\wedge\tau)X_s$. The terms (\ref{new3}) and (\ref{new4}) are of smaller  order by $\|\psi'\|_\infty\leq 1$ and $\|\psi''\|_\infty\leq \nu$.

Finally, (\ref{new5}) is of order at most
$$
\frac{\nu}{N}\sum_{j<k:|j-k|>\ell}\frac{v(k)^2}{|\la_j-\la_k|}
+
\frac{\nu}{N}\sum_{j<k:|j-k|\leq \ell}|\psi'(\la_j)|\frac{|v(j)^2-v(k)^2|}{|\la_j-\la_k|}
+
\frac{\nu^2}{N}\sum_{|j-k|\leq \ell}\|\psi''\|_\infty v(k)^2
$$
By rigidity, the first sum above has order $\nu(\log N)X_s$. The third sum is at most $\nu^2\ell/N X_s$.
Finally, the second sum is bounded using
$$
2\frac{|v(j)^2-v(k)^2|}{|\la_j-\la_k|}\leq M^{-1}(v(j)+v(k))^2+M\frac{(v(j)-v(k))^2}{(\la_j-\la_k)^2}
$$
Choosing $M=c\nu^{-1}$ for $c$ small enough, this proves that (\ref{new5}) can be absorbed into the dissipative term (\ref{new1}) plus 
an error of order $(\nu^2\ell/N) X_s$.

Using $\nu\leq (N/\ell)^{2/3}$, we  have thus  proved that
$
\frac{\rd}{\rd s}\E(X_s)\leq C\left(\nu\log N+\frac{\nu^2\ell}{N}\right)\E(X_s)\leq C\nu(\log N)\E(X_s).
$
In particular,
$$
\E(e^{2\nu\psi(\la_k(s\wedge\tau))}r_{t\wedge\tau}(k_0,k)^2)\leq e^{C\nu(\log N)t}.
$$
If $|k-k_0|\geq N^{1/3+\e}\ell^{2/3}$, then $\psi(\la_k(s\wedge\tau))\geq N^\e(\ell/N)^{2/3}=N^\e\nu^{-1}$, so we obtained
$$
\E(r_{t\wedge\tau}(k_0,k)^2)\leq e^{C\nu(\log N)t-N^\e},
$$
which is exponentially small: $(N/\ell)^{2/3}(\log N)t=\OO(\log N)$ as $\ell\geq Nt$ and $t\leq 1$.  
By the Markov inequality, we  have thus proved  the  part (ii) of the lemma. 
\end{proof}

\section[Relaxation to equilibrium for $\mathrm{t\hspace{0.05cm}{\gtrsim}\hspace{0.05cm}N^{-1 }}$]{Relaxation to equilibrium for ${t\,{\gtrsim}\,N^{-1 }}$}
\label{sec:relax}

The maximum inequality (\ref{eqn:StQUE}) allowed to prove convergence of the eigenvector moment flow along the whole spectrum, in Section \ref{sec:max}, for $t\gtrsim N^{-1/4}$. Assume that, for some reason, the maximum of this flow is always obtained for configurations supported in the bulk.
Then  we can make the approximation $\Im m(\la_k+\ii\eta)\sim 1$ in (\ref{eqn:StQUE}), and  we obtain
$$
S_t'\leq -\frac{1}{\eta}S_t+\frac{N^\xi}{N^{1/2}\eta^{3/2}}
$$
assuming the optimal isotropic local semicircle law with a tiny error $N^\xi/\sqrt{N\eta}$.
Choosing $\eta=N^{-1+\e}$ for some small $\e>0$ then gives, by Gronwall, a relaxation time of order $\gtrsim N^{-1}$.
The purpose of this section is to make this  argument rigorous by using 
 the  finite speed of propagation for  the eigenvector moment flow, i.e.,   Lemma \ref{FiniteSpeed}.

\subsection{Statement of the result.\ }
The initial matrix is denoted $M_N=M_N(0)$, it satisfies the local semicircle law, and its eigenvalues follow the usual Dyson Brownian motion dynamics.

Let $G^{(s)}_N$ (resp. $G^{(h)}_N$)  be a sequence of $N\times N$ random matrices from the Gaussian orthogonal (resp. unitary) ensemble 
(normalized with limiting spectral measure supported on $(-2,2)$, for example).  Note that in this section $G$ stands for a Gaussian matrix, not its Green function.

\begin{theorem}\label{thm:deterministic}
Let $\e$ be any arbitrarily small positive constant and $t=N^{-1+\e}$.
Assume that,   for   a deterministic sequence of matrices $(M_N)_{N\geq 1}$
and  a 
sequence of  unit vectors $\bq=\bq _N$, 
we have,  for any $\omega>\xi>0, D>0$ and $N$ large enough  (depending on these parameters), 
\begin{equation}\label{technicalGreen}
\mathbb{P}\left( A_1(\bq ,\omega,\xi,N) \mid M_N \right)\geq 1-N^{-D}.
\end{equation}
Here we used the notation (\ref{eqn:condition1}) and $\mathbb{P}(\cdot\mid M_N)$ denotes probability   with respect to the matrix Dyson Brownian motion path 
with  the initial condition fixed by $M_N$.  
Then the 
asymptotic normality of  bulk eigenvectors of 
$M_N+\sqrt{t}\,G^{(s)}_N$ holds. More precisely,
if $\boldu$ is the eigenbasis of $M_N+\sqrt{t}\,G^{(s)}_N$,
for any polynomial $P$ there exists $c>0$ such that
\begin{equation}\label{eqn:detRelax}
\sup_{I\subset\llbracket \al N, (1-\al)N\rrbracket,|I|=m,|\bq|=1}
\left|\E\left(P\left(\left(N|\langle \bq,u_{k}\rangle|^2\right)_{k\in I}\right) \right)-\E\left( P\left((|\mathscr{N}_j|^2)_{j=1}^m\right)\right)\right|\leq N^{-c}.
\end{equation}

If moreover (\ref{technicalGreen}) holds for any given sequence $(\bq_N)_{N\geq 1}$, then
any bulk eigenvector of $M_N+\sqrt{t}\,G^{(s)}_N$ have asymptotically independent normal entries (the analogue of Corollary \ref{cor:Gauss}) and each eigenvector satisfy local quantum unique ergodicity (the analogue of Corollary \ref{cor:QUE}).

Similar results hold for the Hermitian matrices  $M_N+\sqrt{t}\,G^{(h)}_N$.
\end{theorem}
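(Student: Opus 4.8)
The plan is to reduce the statement, via the conditioning on eigenvalue paths of Theorem~\ref{thm:PCE} and the hypothesis \eqref{technicalGreen}, to a quantitative \emph{bulk} relaxation estimate for the eigenvector moment flow \eqref{ve}, and then to prove that estimate by combining the finite speed of propagation (Lemma~\ref{FiniteSpeed}) with the maximum principle of Section~\ref{sec:max}, now run with $\Im m(\lambda_k+\ii\eta)\sim 1$. Observe first that since $t=N^{-1+\e}$ the scale/time change $H_t\mapsto\alpha(t)H_t$, $t\mapsto u(t)$ is trivial at this order, so the eigenvector moment flow at time $t$ already describes the eigenvector statistics of $M_N+\sqrt t\,G^{(s)}_N$; hence, unlike in the proof of Theorem~\ref{thm:main}, \emph{no} Green function comparison is needed. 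Arguing as in the proof of Theorem~\ref{thm:maxPrinciple}, it suffices to work on the event $(M_N,\bla)\in A(\bq,\omega,\xi,\nu,N)$ of \eqref{eqn:A}, which by \eqref{technicalGreen} and Markov's inequality carries all but a polynomially small fraction of the probability and on which $A_1\subset A_2\cap A_3$, together with \eqref{far1} (a consequence of the eigenvalue rigidity \eqref{eqn:condition2}), hold for all $t\in[0,1]$. On this event it is enough to prove that for every $n$ there is $c>0$ such that, with $t=N^{-1+\e}$, $\sup\{|f_t(\boeta)-1|\col \mathcal N(\boeta)=n,\ \boeta\subset\llbracket\al N,(1-\al)N\rrbracket\}\le N^{-c}$ uniformly in $\bq$: indeed, exactly as Corollary~\ref{cor:GaussDivisible} was deduced from Theorem~\ref{thm:maxPrinciple}, this yields \eqref{eqn:detRelax}, and then the analogues of Corollaries~\ref{cor:Gauss} and \ref{cor:QUE} follow verbatim from the arguments of Sections~\ref{secInd} and \ref{secQUE}, with $\mathbb T_N$ replaced by $\llbracket\al N,(1-\al)N\rrbracket$.

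To prove the bulk relaxation estimate I would first localize the dynamics. Fix $\ell$ a small power of $N$ with $Nt\ll\ell\ll N$ (also $\ell\gg N^\xi$) and split $\mathscr B^{(s)}=\mathscr S+\mathscr L$ as in \eqref{eqn:shortcut}. Lemma~\ref{cut2}, together with the $\pi^{(s)}$-self-adjointness of the semigroups, which turns its $\LL^1$ bound into a pointwise bound against the bounded datum $f_0$ (one has $\|f_0\|_\infty\le N^{n\omega}$ on $A_3$), shows that $\rU_{\mathscr S}(0,s)f_0$ approximates $f_s$ uniformly with error $\OO(N^{n\omega}Ns/\ell)$. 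Next fix a bulk region $B^+=\llbracket\beta N,(1-\beta)N\rrbracket$ with $\beta$ a small fixed constant, $\beta<\al$, and let $h_s$ solve $\partial_s h=\mathscr S(s)h$ with initial datum equal to $f_0$ on configurations supported in $B^+$ and to $1$ elsewhere. Since $\rU_{\mathscr S}$ has finite speed of propagation (Lemma~\ref{FiniteSpeed}), propagating at scale $N^\e\ell\ll\beta N$ up to time $t$, on the good event $h_s$ agrees with $f_s$ up to $e^{-N^{c}}$ on all configurations supported in $\llbracket\al N,(1-\al)N\rrbracket$, and agrees with $1$ up to $e^{-N^{c}}$ on all configurations supported in $(B^+)^c$, uniformly for $0\le s\le t$. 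Thus it is enough to bound $S_t:=\sup_{\mathcal N(\boeta)=n}(h_t(\boeta)-1)$ and the corresponding infimum.

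Now I would run the maximum principle of Theorem~\ref{thm:maxPrinciple} on $S_s$ (with the convention \eqref{eqn:derivative}), by induction on $n$. At a time $s$ where the supremum is attained at $\boeta_0$: if every particle of $\boeta_0$ lies in $(B^+)^c$ then $h_s(\boeta_0)-1=\OO(e^{-N^c})$ and there is nothing to prove; otherwise some particle of $\boeta_0$ sits at an index $k_r$ with $\hat k_r\ge\beta N$, so $\Im m(\lambda_{k_r}+\ii\eta)\ge c_\beta>0$. In $\mathscr S$ applied to $h_s$ at $\boeta_0$ every term $h_s(\boeta_0^{ij})-h_s(\boeta_0)$ is $\le 0$ because $\boeta_0$ is a global maximum; I keep only the terms coming from the particles $k_r$ with $\hat k_r\ge\beta N$ and discard the rest. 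For each such $k_r$ the sites $j$ with $|j-k_r|\le\ell$ are again bulk indices, and $\boeta_0^{k_r j}$ and $\boeta_0\backslash k_r$ lie in the region where $h_s=f_s+\OO(e^{-N^c})$; so the computation of Theorem~\ref{thm:maxPrinciple} goes through: replacing $h_s$ by $f_s=\E(\,\cdot\mid H_0,\bla)$ there, the isotropic local law (Theorem~\ref{thm:ILSC}, available on $A_1$) turns $\frac1N\sum_j\frac{\eta f_s(\boeta_0^{k_rj})}{(\lambda_j-\lambda_{k_r})^2+\eta^2}$ into $f_s(\boeta_0\backslash k_r)\,\Im m(\lambda_{k_r}+\ii\eta)$ plus controlled errors, the induction hypothesis gives $f_s(\boeta_0\backslash k_r)=1+\OO(N^{-c_{n-1}})$ for $s\ge t/2$, and rigidity \eqref{eqn:condition2} handles the tail $|j-k_r|>\ell$. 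The outcome is the differential inequality $S'_s\le -\tfrac{c_\beta}{\eta}S_s+\OO\!\big(N^\xi(N\eta)^{-1/2}\eta^{-1}+N^{-c_{n-1}}\eta^{-1}\big)$ on $[t/2,t]$; choosing $\eta=N^{-1+\omega}$ with $\xi<\omega/2<\omega<\e$ and applying Gronwall over an interval of length $t/2=\tfrac12N^{-1+\e}$ (so $t/(2\eta)=\tfrac12N^{\e-\omega}\to\infty$) yields $S_t\le N^{-c}$, and symmetrically for the infimum. The Hermitian case is identical, using $\mathscr B^{(h)}$, its uniform reversible measure, and \eqref{momentFlotHer}.

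The step I expect to be the main obstacle is precisely the bookkeeping around the localization in the last two paragraphs: one must choose the nested bulk regions so that every configuration actually touched by $\mathscr S$ at the maximizing configuration $\boeta_0$ lies where the identification $h_s=f_s+\OO(e^{-N^c})$ is valid---so that the isotropic local law may legitimately be invoked---while simultaneously ensuring that $\boeta_0$, unless it is deep in the edge (where $h_s\approx 1$), carries at least one particle with $\Im m\sim 1$; configurations straddling the bulk--edge transition are handled only because the corresponding generator terms point downhill from the global maximum and may be dropped. Propagating the various errors ($N^{n\omega}Ns/\ell$, the $e^{-N^c}$ terms amplified by $\eta^{-1}$, the isotropic-law and rigidity tails, the cancelling diagonal $j=k_r$ contribution) uniformly in $\bq$ and through the induction on $n$ is routine but delicate, and is where the precise admissible ranges of $\e,\omega,\xi,\ell,\beta$ get pinned down.
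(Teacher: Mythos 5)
Your strategy is the paper's own: reduce to a quantitative bulk relaxation estimate for the moment flow, localize via the short-range cutoff $\mathscr S$ and Lemma \ref{cut2} (using duality to turn its $\LL^1$ bound into a pointwise one), flatten the initial datum away from a bulk window, and close a localized maximum principle by induction on $n$ using finite speed of propagation, the isotropic law, $\Im m\sim 1$ and $\eta=N^{-1+\omega}$ with Gronwall. The genuine gap is in the flattening step. You replace $f_0$ by $1$ on configurations not supported in $B^+=\llbracket \beta N,(1-\beta)N\rrbracket$ by a \emph{sharp} cutoff, and your dichotomy at the maximizer $\boeta_0$ is: either all particles lie in $(B^+)^c$, whence $h_s(\boeta_0)-1=\OO(e^{-N^c})$, or some particle lies in $B^+$, whence the local-law computation runs. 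Both branches fail for configurations within distance $\OO(N^{\e}\ell)$ of the boundary of $B^+$. If $\boeta_0$ is supported in $(B^+)^c$ but has a particle at $\beta N-1$, the short-range walk reaches, with non-negligible weight, configurations supported in $\llbracket \beta N,\beta N+N^{\e}\ell\rrbracket\subset B^+$ where $h_0=f_0$ can be as large as $N^{n\omega}$; so $h_s(\boeta_0)$ is \emph{not} exponentially close to $1$. Conversely, if $\boeta_0$ has a particle near the boundary of $B^+$ (or mixes bulk and near-boundary particles), the configurations $\boeta_0^{k_rj}$ entering the gain term lie neither where $h_s=f_s+\OO(e^{-N^c})$ nor where $h_s\approx 1$, so there is no input with which to identify $\frac1N\sum_j\frac{\eta\, h_s(\boeta_0^{k_rj})}{(\la_j-\la_{k_r})^2+\eta^2}$ with $\Im m$ times anything: no local law holds for $h$ across a sharp cutoff. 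Discarding ``downhill'' generator terms does not address this, because the obstruction is the location of the maximizer itself, not the sign of the discarded terms.

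This is precisely what the paper's averaging operator ${\rm Av}=\frac{1}{|\llbracket\al N,2\al N\rrbracket|}\sum_a{\rm Flat}_a$ is for. It yields $g_0(\boeta)=a_{\boeta}f_0(\boeta)+(1-a_{\boeta})$ with $a_{\boeta}$ Lipschitz in the configuration (estimate (\ref{eqn:propa})), and Lemma \ref{ILSLFlat} then establishes the local law for the flattened solution, $\Im\sum_k\frac{1}{N}\frac{g_t(\boeta^{k_0k})}{z-\la_k}\approx\Im m(z)\left(a_{\boeta}f_t(\boeta\backslash k_0)+(1-a_{\boeta})\right)$, \emph{uniformly across the cutoff region}: both $f$ and the constant $1$ satisfy the local law individually, and the interpolation coefficient varies only by $\OO(\sqrt{\eta}\,N^{\omega})$ over the relevant spectral window. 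With this the dichotomy becomes clean: either the maximizer has a particle outside $\llbracket\frac{\al}{2}N,(1-\frac{\al}{2})N\rrbracket$, hence at macroscopic distance from where $g_0\neq1$, so $g_s\approx1$ by finite speed; or it lies entirely in $\llbracket\frac{\al}{2}N,(1-\frac{\al}{2})N\rrbracket$ and every gain term is controlled by Lemma \ref{ILSLFlat} together with the induction hypothesis. No choice of nested regions rescues the sharp cutoff, so the smoothing of the cutoff (and the accompanying local law for the averaged dynamics) is a missing idea in your argument, not deferred bookkeeping. The remainder of your outline (reduction to the moment flow, deduction of the corollaries, the Hermitian case) matches the paper.
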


The Green function  in $A_1$ appearing    in    (\ref{technicalGreen}) is with respect to the matrix  $(M_N(s))_{s\geq 0}$ with  $M_N=M_N(0)$  being 
 the initial matrix and $M_N(s)$ the value at time $t$ of a (matrix)  Dyson Brownian Motion.\\

Theorem \ref{thm:deterministic} means that,  the  initial structure of bulk eigenvectors completely disappears with the addition of a small noise, 
provided that the initial matrix satisfies a strong form of semicircle law.
If the initial condition is a generalized Wigner matrix, the matrix Dyson Brownian motion is again a generalized Wigner ensemble 
after rescaling. In this case, the asymptotic normality of the eigenvectors was already proved in Theorem \ref{thm:main}
and therefore the conclusion of Theorem \ref{thm:maxPrincipleLoc} was proved as well. 
The key point of Theorem \ref{thm:deterministic}  
lies in that it holds for deterministic initial matrices, provided that the local  isotropic semicircle law holds.\\

Note that by standard perturbation theory Theorem \ref{thm:deterministic} in general does not hold for  $t\ll N^{-1}$. 
Recall that Dyson's conjecture states that  the  relaxation time to local equilibrium 
for bulk eigenvalues under the DBM is  $t\sim N^{-1}$. 
Thus Theorem~\ref{thm:deterministic} is  the analogue of this conjecture  in the context of bulk eigenvectors.

\begin{remark}
Theorem \ref{thm:deterministic} gives optimal relaxation speed for dynamics of bulk eigenvectors provided that the 
local law holds along the whole spectrum, i.e. condition (\ref{technicalGreen}) holds.
One may be interested in the dynamics relaxation only locally, i.e. proving QUE only for certain eigenvectors with corresponding energy $\lambda_i$ around $E_0=\gamma_{k_0}\in(-2.2)$. Then as an input,  the local law is only needed in a small window around $E$. 

More precisely, let $c>\epsilon>0$ be fixed (remember $t=N^{-1+\e}$). Assume that (\ref{technicalGreen}) holds for any $\omega>\xi>0$   in the smaller domain (replacing the original domain defined in  (\ref{eqn:omega}))
\begin{equation}
\widetilde{\b S} = \widetilde{\b S}(\omega, N)=\hb{z=E+\ii\eta \in\C \col \abs{E-E_0}\leq N^{-1+c}\,,\, N^{-1 + \omega} \leq \eta \leq \omega^{-1}}.
\end{equation}
Then the conclusion (\ref{eqn:detRelax}) holds after restricting the $\sup$ to $I\subset \llbracket 
k_0-N^c/10,
k_0+N^c/10
\rrbracket$.

To summarize,  the  optimal time relaxation result, Theorem \ref{thm:deterministic}, 
 can be made local in the spectrum, because the key input in this result, the finite speed of propagation 
Lemma \ref{FiniteSpeed},  holds locally.  The modifications  needed to prove these local 
versions are obvious and we leave them to interested readers. 
\end{remark}

We will prove Theorem \ref{thm:deterministic}  by using  the maximum principle locally. For this purpose, we will use 
the finite speed of propagation estimate,  Lemma \ref{FiniteSpeed}.  This will be explained in the next subsections.

\subsection{ Flattening of initial condition at the edge.\ }

Let $\al>0$ be a fixed small number. 
We define the following flattening and averaging operators on the space of functions of configurations with $n$ points: any $a\in\llbracket 1,N/2\rrbracket$,
\begin{align*}
&({\rm Flat}_a (f))(\boeta)=f(\boeta)\ {\rm if}\ \boeta\subset\llbracket a,N+1-a\rrbracket,\ 1\ {\rm otherwise},\\
&{\rm Av}(f)=\frac{1}{|\llbracket\al N,2\al N\rrbracket|}\sum_{a\in\llbracket\al N,2\al N\rrbracket}{\rm Flat}_a(f).
\end{align*}

We can write 
\begin{equation}\label{eqn:ak}
{\rm Av}(f)(\boeta)=a_{\boeta} f(\boeta)+(1-a_{\boeta})
\end{equation}
for some coefficient $a_{\boeta}\in[0,1]$ ($a_{\boeta}=0$ if $\boeta\not\subset\llbracket\al N,(1-\al)N\rrbracket$, $1$ if $
\boeta\subset\llbracket 2\al N,(1-2\al)N\rrbracket$). We will only use the elementary property
\begin{equation}\label{eqn:propa}
|a_{\boeta}-a_{{\boxi}}|\leq C\ \frac{d(\boeta,\boxi)}{N}. 
\end{equation}

For a general number of particles $n$, consider now the following modification of the eigenvector moment flow (\ref{ve}).
We only keep the short-range dynamics (depending on a parameter $\ell$) and modify the initial condition to be flat when there is a particle close to the edge:
\begin{align}\label{eqn:modMom}
&\partial_t g_{\bla, t} =  \mathscr{S}(t) g_{\bla, t},\\
&g_{\bla,0}(\boeta)=({\rm Av}f_{\bla,0})(\boeta),\notag
\end{align}
We will abbreviate $g_{\bla,t}(\boeta)$ by $g_t(\boeta)$,
and $f_{\bla,t}(\boeta)$ by $f_t(\boeta)$ (for $n=1$,  we write these functions as  $f_t(k)$ and $g_t(k)$ where $\boeta$ is the configuration with $1$ particle at $k$).  We remind the reader that $f_t(\boeta)$ can be  define either by  \eqref{feq} or by the solution of 
the equation \eqref{ve}. In particular, $f_t(k)$ is the conditional expectation of $|\langle\bq, u_k(t)\rangle|^2$ given 
$\bla$, i.e., 
\be
f_t(k)=  N  \E \left( |\langle\bq, u_k(t)\rangle|^2 \mid \bla \right) 
\ee
where $\bq$ is a fixed unit vector.  In all our application, the initial data $f_{\bla,0})(\boeta)$ is independent of $\bla$
and given by \eqref{feq}  with $t=0$. For $g_{\bla, t}$, we can only understand it as the solution to \eqref{eqn:modMom}. 

For small time $t$, by finite speed of propagation we will prove that $g=1$ (up to exponentially small corrections) close to the edge, so that the maximum principle for the dynamics (\ref{eqn:modMom}) can be localized in the bulk.

We first prove that for these modified dynamics, the isotropic law holds in the following sense. The following result is
deterministic.

\begin{lemma}\label{ILSLFlat} 
Let $\e>0$ be a fixed small number, $t=N^{-1+\e}$ and $\ell=N^{\delta} Nt$ for some $\delta>0$ (here $\ell$
is the short-range dynamics cutoff parameter). 
Then   there exist  (small) positive constants $\omega_0,\xi_0$ such that the following holds.
Assume that for some  $0<\omega<\omega_0$, $0<\xi<\xi_0$, 
$(M_N(s))_{0\leq s\leq 1}$ is in $A_1(\bq,\omega,\xi,N)$. Assume moreover that $(\ref{eqn:finite})$ holds.
Let $z$  satisfy   $-3<\Re(z)<3$ and  $N^{-1+2\omega}<\Im(z)<\min(N^{-1+\delta/2},N^{-3/4})$.
Then we have
\begin{equation}\label{eqn:bound}
 \left|\Im\sum_{k=1}^N\frac{1}{N}\frac{g_t(k)}{z-\la_k}-\Im m(z)\right|\leq  C N^{\xi+\omega}\left(\sqrt{\frac{\Im m(z)}{N\eta}}+\frac{1}{N\eta}\right)+C\frac{ \ell  N^{2\omega}}{N} 
\end{equation}
where  $C$ depends only on $\xi,\omega,\nu,\e$. Moreover,
consider the case of $n$ particles. Let $k_0\in\llbracket 1,N\rrbracket$ and $z=\la_{k_0}+\ii\eta$. Then for any configuration 
$\boeta$ containing at least one particle at $k_0$ we have
\begin{multline}\label{eqn:multibound}
\Im \sum_{k=1}^N\frac{1}{N}\frac{g_t(\boeta^{k_0k})}{z-\la_k}
-
\Im m(z)\left(a_{\boeta}f_t(\boeta\backslash k_0)+(1-a_{\boeta})\right) \leq C \left( N^{\xi+n \omega}\left(\sqrt{\frac{\Im m(z)}{N\eta}}+\frac{1}{N\eta}\right)+\frac{ \ell N^{2\omega}}{N}\nc\right)\\
\nc
\end{multline}
where $\boeta\backslash k_0$ stands for the configuration $\boeta$ with one particle removed from site $k_0$. \nc
\end{lemma}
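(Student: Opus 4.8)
The plan is to compare the modified dynamics $g_t$ with the (true) eigenvector moment flow solution $f_t$ evaluated against the averaged/flattened initial condition, using the finite speed of propagation estimate of Lemma \ref{FiniteSpeed} to control the discrepancy, and then to invoke the isotropic local semicircle law (Theorem \ref{thm:ILSC}) together with the rigidity estimates of $A_1$ to replace the weighted Stieltjes transform by $\Im m(z)$. The starting point is the representation $g_t = \rU_{\mathscr S}(0,t)\,({\rm Av}\,f_{\bla,0})$. First I would split this into two contributions: the difference between the short-range semigroup $\rU_{\mathscr S}$ and the full semigroup $\rU_{\mathscr B}$, bounded in $\LL^1$ by $C Ns/\ell = C N^{-\delta}$ via Lemma \ref{cut2}; and then $\rU_{\mathscr B}(0,t)({\rm Av}\,f_{\bla,0})$, which, since ${\rm Av}$ only modifies the initial data near the edge (sites outside $\llbracket \al N,(1-\al)N\rrbracket$), differs from $\rU_{\mathscr B}(0,t) f_{\bla,0}=f_t$ only through particles that start near the edge. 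For the Stieltjes-transform sums in \eqref{eqn:bound} and \eqref{eqn:multibound} the relevant configurations have a particle at a bulk site $k_0$ (since $z=\la_{k_0}+\ii\eta$ with $k_0$ eventually in the bulk), and any edge particle is at distance $\gtrsim \al N \gg N^\e \ell$ from the bulk; so by Lemma \ref{FiniteSpeed}(i) the weight $r_t(\boeta,\boxi)$ connecting bulk-supported and edge-touching configurations is $\OO(e^{-N^{\e/2}})$ with overwhelming probability. This is how the $\ell N^{2\omega}/N$-type error (and the exponentially small remainder) enters.

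Next, having reduced $g_t(k)$ (resp.\ $g_t(\boeta^{k_0 k})$) to $f_t(k)$ (resp.\ to the $f$-flow quantity whose conditional expectation is $N\E(|\langle\bq,u_k\rangle|^2\mid\bla)$ times the lower-particle-number factor) up to the two stated error terms, I would write
$$
\Im\sum_{k=1}^N\frac{1}{N}\frac{f_t(k)}{z-\la_k}
=\Im\,\E\!\left(\sum_{k=1}^N\frac{\langle\bq,u_k(t)\rangle^2}{z-\la_k}\,\Big|\,\bla\right)
=\Im\,\E\!\left(\langle\bq,G(z)\bq\rangle\,\big|\,\bla\right),
$$
using the spectral decomposition $G(z)=\sum_k \langle\bq,u_k\rangle^2(\la_k-z)^{-1}$ with $H(t)=\boldu_t^*\bla_t\boldu_t$. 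On the event $A_1(\bq,\omega,\xi,N)$ — which holds with probability $1-N^{-D}$ conditionally on $M_N$ by hypothesis \eqref{technicalGreen} — Theorem \ref{thm:ILSC} gives $\langle\bq,G(z)\bq\rangle = m(z)+\OO\!\big(N^\xi(\sqrt{\Im m(z)/(N\eta)}+1/(N\eta))\big)$ for all $z$ in the allowed range, and on the complement of $A_1$ the crude bound $\langle\bq,G(z)\bq\rangle=\OO(N/\eta)$ together with $\P(A_1^c\mid M_N)\le N^{-D}$ makes that contribution negligible; one also uses the delocalization bound $N\langle\bq,u_k\rangle^2\le N^\omega$ from $A_1$ (via $A_1\subset A_3$) to absorb the missing diagonal term $j=k_0$ into the $N^\omega/(N\eta)$ error, exactly as in the proof of Theorem \ref{thm:maxPrinciple}. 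For the $n$-particle statement \eqref{eqn:multibound}, the same computation produces the factor $a_{\boeta}f_t(\boeta\backslash k_0)+(1-a_{\boeta})$ from the structure of the averaged initial condition \eqref{eqn:ak}, with the extra combinatorial factors in the $\QQ$-normalization supplying the matching coefficients; the polynomial delocalization bound upgrades the single $N^\omega$ to $N^{n\omega}$ because up to $n$ eigenvector-overlap factors $N\langle\bq,u_{i}\rangle^2\le N^\omega$ may appear.

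The main obstacle I expect is bookkeeping the two separate sources of error and checking they are genuinely of the claimed size in the stated window $N^{-1+2\omega}<\eta<\min(N^{-1+\delta/2},N^{-3/4})$: the finite-speed argument demands $d(\boeta,\boxi)\ge N^\e\ell$ which forces $\ell$ small relative to $\al N$ and simultaneously $\ell = N^\delta Nt$ must be large enough that $N^\omega\ll\ell$ (needed in Lemma \ref{FiniteSpeed}), while the Lemma \ref{cut2} error $Ns/\ell$ and the flattening error $\OO(\ell N^{2\omega}/N)$ must both be dominated or at least within the stated right-hand side — this is precisely why $\eta$ is capped below $N^{-1+\delta/2}$, so that $\ell N^{2\omega}/N \lesssim 1$ relative to $1/(N\eta)$-scale quantities is consistent. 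A secondary technical point is that the set $A_1$ controls $\langle\bq,G(z)\bq\rangle$ only on the discrete grid $\b S(\omega,N)$; extending to all $z$ with $N^{-1+2\omega}<\eta$ uses the Lipschitz-in-$z$ continuity of the resolvent exactly as in the proof of Lemma \ref{lem:A13}, so no new idea is needed there. Once these constants are tracked, the estimate \eqref{eqn:bound} and its multi-particle version \eqref{eqn:multibound} follow by collecting terms.
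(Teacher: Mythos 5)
Your overall architecture (cutoff to the short-range dynamics via Lemma \ref{cut2}, removal of the flattening via finite speed of propagation, isotropic local law for the main term) matches the paper's, but there are two genuine problems in the execution.

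First, you apply the finite speed of propagation to the wrong semigroup. After invoking Lemma \ref{cut2} you are left comparing $\rU_{\mathscr B}(0,t)({\rm Av}f_0)$ with $\rU_{\mathscr B}(0,t)f_0=f_t$, and you justify this by citing Lemma \ref{FiniteSpeed} for the kernel $r_t(\boeta,\boxi)$. But $r_t$ is by definition the kernel of the \emph{short-range} semigroup $\rU_{\mathscr S}$, and the exponential locality fails for the full generator $\mathscr B$, whose long-range part transports mass from the edge to the bulk at polynomially small (not exponentially small) rates. The order of operations matters: the paper first controls the commutator $\rU_{\mathscr S}(0,t)\,{\rm Av}-{\rm Av}\,\rU_{\mathscr S}(0,t)$ (where finite speed is legitimate), and only then passes from $\rU_{\mathscr S}$ to $\rU_{\mathscr B}$ with ${\rm Av}$ sitting \emph{outside} the semigroup, so that the remaining main term is ${\rm Av}f_t$, not $f_t$. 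Your route, as written, needs a finite-speed estimate for $\rU_{\mathscr B}$ that is not available.

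Second, you misidentify the origin of the $\ell N^{2\omega}/N$ error. It does not come from exponentially small edge-to-bulk propagation (that mechanism would only produce an $e^{-N^{c}}$ remainder). It comes from the boundary layer of each flattening threshold: for $|k-a|\le \ell N^{\omega}$ neither ``${\rm Flat}_a$ commutes with $\rU_{\mathscr S}$ up to $e^{-N^c}$'' nor ``both sides equal $1$'' applies, and one can only use the trivial bound $O(N^{\omega})$ there; averaging over the $\sim \alpha N$ values of $a\in\llbracket\alpha N,2\alpha N\rrbracket$ converts this into the per-site error $C\ell N^{2\omega}/N$. Relatedly, your treatment of the main term implicitly assumes $a_k\in\{0,1\}$: reducing $g_t(k)$ to $f_t(k)$ is only correct deep in the bulk, whereas the sum in \eqref{eqn:bound} runs over all $k$ and the lemma allows $\Re(z)$ anywhere in $(-3,3)$. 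To handle the intermediate region one must keep ${\rm Av}f_t(k)=a_kf_t(k)+(1-a_k)$, restrict the sum to $|k-m_0|\le N\sqrt{\eta}$, and use the Lipschitz property \eqref{eqn:propa} to freeze $a_k$ at $a_{m_0}$ (and likewise $a_{\boeta^{k_0k}}$ at $a_{\boeta}$ in the $n$-particle case); this is precisely how the factor $a_{\boeta}f_t(\boeta\backslash k_0)+(1-a_{\boeta})$ in \eqref{eqn:multibound} arises, rather than from the $\QQ$-normalization as you suggest.
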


\begin{proof}
We first show that the difference between  $g_t(k)= (\rU_{\mathscr{S}}(0,t){\rm Av}f_0)(k)$ and $({\rm Av}\rU_{\mathscr{B}}(0,t)f_0)(k)$ is small. 
More precisely, we can bound the left hand side of (\ref{eqn:bound})
by $|{\rm (i)}|+|{\rm (ii)}|+|{\rm (iii)}|$ where
\begin{align*}
&{\rm (i)}=\Im\sum_{k=1}^N\frac{1}{N}\frac{(\rU_{\mathscr{S}}(0,t){\rm Av}f_0)(k)-({\rm Av}\rU_{\mathscr{S}}(0,t)f_0)(k)}{z-\la_k},\\
&{\rm (ii)}=\Im\sum_{k=1}^N\frac{1}{N}\frac{({\rm Av}\rU_{\mathscr{S}}(0,t)f_0)(k)-({\rm Av}\rU_{\mathscr{B}}(0,t)f_0)(k)}{z-\la_k},\\
&{\rm (iii)}=\Im\sum_{k=1}^N\frac{1}{N}\frac{({\rm Av}\rU_{\mathscr{B}}(0,t)f_0)(k)}{z-\la_k}-\Im m(z).
\end{align*}
The term (i) will be controlled by finite speed of propagation;  (ii) will be controlled by  Lemma \ref{cut2}, and (iii) by  the isotropic local semicircle law. 

To bound (i), we write   
\begin{equation}\label{eqn:(i)}
(\rU_{\mathscr{S}}(0,t){\rm Av}f_0)(k)-({\rm Av}\rU_{\mathscr{S}}(0,t)f_0)(k)=
\frac{1}{\al N}\sum_{a\in\llbracket\al N,2\al N\rrbracket}\left(\rU_{\mathscr{S}}(0,t){\rm Flat}_a f_0-{\rm Flat}_a\rU_{\mathscr{S}}(0,t) f_0\right)(k).
\end{equation}
To control the above terms, first assume that $a+ \ell N^\omega < k$
 Denote $(f\1_{\geq a})(x)=f(x)\1_{x\geq a}$, and similarly for $f\1_{< a}$.  Then
\begin{align}
(\rU_{\mathscr{S}}(0,t){\rm Flat}_a f_0)(k)
&=\notag
\left(\rU_{\mathscr{S}}(0,t)(f_0\1_{\geq a})\right)(k)
+
\left(\rU_{\mathscr{S}}(0,t)\1_{<a}\right)(k)\\
&=\notag
\left(\rU_{\mathscr{S}}(0,t) (f_0\1_{\geq a})\right)(k)+\OO(e^{-N^{c}})\\
&=\notag
\left(\rU_{\mathscr{S}}(0,t)(f_0\1_{\geq a})\right)(k)+
\left(\rU_{\mathscr{S}}(0,t)(f_0\1_{<a})\right)(k)+
\OO(e^{-N^{c}})\\
&=\notag
\left(\rU_{\mathscr{S}}(0,t)f_0\right)(k)+
\OO(e^{-N^{c}})\\
&=
({\rm Flat}_a \rU_{\mathscr{S}}(0,t) f_0)(k)+\OO(e^{-N^{c}})\label{eqn:cut1}
\end{align}
In the above lines, we used the finite speed of propagation Lemma \ref{FiniteSpeed} in the second and third equalities, namely $(\rU_{\mathscr{S}}(0,t)\delta_x)(k)=\OO(e^{-N^{c}})$ for any $x\leq a$ and $k\geq a+\ell N^\omega$
(the case $x>a/2$ follows from part (i) of Lemma \ref{FiniteSpeed}, the case $x\leq a/2$ from part (ii)
and $a\in\llbracket\al N,2\al N\rrbracket$).

\noindent
For $k< a-\ell N^\omega$, in the same way we obtain
\begin{equation}
(\rU_{\mathscr{S}}(0,t){\rm Flat}_a f_0)(k)= 1 + \OO(e^{-N^{ c}}) =({\rm Flat}_a\rU_{\mathscr{S}}(0,t) f_0)(k)+\OO(e^{-N^{ c}}).
  \label{eqn:cut2}
\end{equation}

\noindent
For $a- \ell N^\omega \leq k\leq a+  \ell N^\omega $, as $\rU_{\mathscr{S}}$ is a bounded in $L_\infty$ we have
\begin{equation}
|(\rU_{\mathscr{S}}(0,t){\rm Flat}_a f_0)(k)-({\rm Flat}_a\rU_{\mathscr{S}}(0,t) f_0)(k)|\leq 2\sup_k f_0(k)\leq C N^\omega.\label{eqn:cut3}
\end{equation}
Equations (\ref{eqn:cut1}), (\ref{eqn:cut2}), (\ref{eqn:cut3}) together imply that (\ref{eqn:(i)}) and therefore (i) are bounded by 
$C\  \ell N^{2\omega}/N$.

To bound the term (ii), define the reversed dynamics   $\rU_{\mathscr{S}}^*$ by  
\begin{equation}\label{eqn:reversed}
\partial_{\sigma} \rU_{\mathscr{S}}^* (s,\sigma) = \mathscr{S}(t-\sigma)  {\rU}_{\mathscr{S}}^* (s,\sigma),
\end{equation}
and $s$ is always set to be $=0$ ] and similarly for 
$\rU_{\mathscr{B}}^*$. Notice that Lemma \ref{cut2} holds for these time-reversed dynamics, the proof is unchanged. Thus we have 
\begin{multline*}
|({\rm Av}\rU_{\mathscr{S}}(0,t)f_0)(k)-({\rm Av}\rU_{\mathscr{B}}(0,t)f_0)(k)|\leq
|(\rU_{\mathscr{S}}(0,t)f_0)(k)-(\rU_{\mathscr{B}}(0,t)f_0)(k)|\\
=\frac{1}{\pi(k)}|\langle f_0,({\rU}_{{\mathscr{S}}}^*(0,t)-{\rU}_{{\mathscr{B}}}^*(0,t))\delta_k\rangle_\pi|
\leq C\ N^\omega\frac{Nt}{\ell},
\end{multline*}
where the first inequality follows from  (\ref{eqn:ak}), and the second follows from Lemma \ref{cut2}.  
This proves that
$
|{\rm (ii)}|\leq N^{\omega}  \frac { N t} \ell ( \frac { N^\xi}{ N \eta} + \sqrt{\frac{\Im m(z)}{N\eta}}+  \im m(z)),
$ 
where we used the local semicircle law, i.e. our matrix is in $A_1(\bq,\omega,\xi,N)$ from (\ref{eqn:condition1}).
We therefore have $|{\rm (ii)}|\leq N^{\xi+\omega}\sqrt{\frac{\Im m(z)}{N\eta}}$
provided that  $N t/\ell\leq 1/(N\eta)^{1/2}$,  which follows from  our assumptions
on $t, \ell$ and  $\Im(z)\geq N^{-1+2\delta}$.

Concerning the error term (iii), we proceed as follows. Let $m_0$ be the index such that $|\Re(z)-\gamma_{m_0}|=\inf_{1\leq i\leq N}\{|\Re(z)-\gamma_{i}|\}$.
Then 
$$
\Im\sum_{k=1}^N\frac{1}{N}\frac{({\rm Av}\rU_{\mathscr{B}}(0,t)f_0)(k)}{z-\la_k}
=
\Im\sum_{|k-m_0|\leq N\sqrt{\eta}}\frac{1}{N}\frac{({\rm Av}\rU_{\mathscr{B}}(0,t)f_0)(k)}{z-\la_k}+\OO\left(\frac{N^\omega}{N}\sum_{i> N\sqrt{\eta}}\frac{\eta}{\eta^2+(i/N)^2}\right)
$$
where we use that $\|f_0\|_\infty\leq N^\omega$ (which follows from the condition \eqref{eqn:condition3}). For any function $f$
we write ${\rm (Av)} f(k)=a_k f(k)+(1-a_k)$ with the notation from (\ref{eqn:ak}). We obtain
\begin{multline}
\Im\sum_{k=1}^N\frac{1}{N}\frac{({\rm Av}\rU_{\mathscr{B}}(0,t)f_0)(k)}{z-\la_k}
=
\Im\sum_{|k-m_0|\leq N\sqrt{\eta}}\frac{1}{N}\frac{a_k f_t(k)+(1-a_k)}{z-\la_k}+\OO(N^\omega\sqrt{\eta})\\
=
\Im\sum_{|k-m_0|\leq N\sqrt{\eta}}\frac{1}{N}\frac{a_{m_0} f_t(k)+(1-a_{m_0})}{z-\la_k}
+
\Im\sum_{|k-m_0|\leq N\sqrt{\eta}}\frac{1}{N}\frac{(a_k-a_{m_0}) f_t(k)+(a_{m_0}-a_k)}{z-\la_k}
+\OO(N^\omega\sqrt{\eta})\label{eqn:aver}.
\end{multline}
Moreover, the first sum above is equal to
$$
a_{ m_0}
\Im\sum_{k=1}^N\frac{1}{N}\frac{f_t(k)}{z-\la_k}
+(1-a_{ m_0})\Im\sum_{k=1}^N\frac{1}{N}\frac{1}{z-\la_k}
+
\OO(N^\omega\sqrt{\eta})\\
=
\Im m(z)+\OO\left(N^{\xi}\sqrt{\frac{\Im m(z)}{N\eta}}\right)+\OO\left(N^\omega\sqrt{\eta}\right)
$$
where we used $(M_N,\bla)\in A(\bq,\omega,\xi,\nu,N)$. From (\ref{eqn:propa}),  we have 
$|a_k-a_{m_0} | \le \sqrt \eta N^\om$ and  the second sum in (\ref{eqn:aver}) can be bounded by
$
\OO(N^\omega\sqrt{\eta})
$,  which is smaller than $N^\omega/(N\eta)$ for $\eta\leq N^{-3/4}$.
Gathering all estimates, we obtain that (\ref{eqn:bound}) holds.

In the case of general $n$, to prove (\ref{eqn:multibound}), we proceed in the same way. As the term of type (i) is also bounded by finite speed of propagation, we just need to prove that
$$
\Im \sum_{k=1}^N\frac{1}{N}\frac{{\rm Av}\rU_{\mathscr{S}}(0,t)f_0(\boeta^{k_0k})}{z-\la_k}
=
m(z)\left(a_{\boeta}f_t(\boeta\slash k_0)+(1-a_{\boeta})\right)+\OO\left( N^{\xi+n \omega}\left(\sqrt{\frac{\Im m(z)}{N\eta}}+\frac{1}{N\eta}\right)\right).
$$
Thanks to Lemma \ref{cut2} it is sufficient to prove the above estimate replacing $\rU_{\mathscr{S}}$ by $\rU_{\mathscr{B}}$. We also can restrict the summation to $|k-k_0|\leq N\sqrt{\eta}$. Then,
similarly to the $n=1$ case,
we write
\begin{multline*}
{\rm Av}\rU_{\mathscr{B}}(0,t)f_0(\boeta^{k_0k})={\rm Av}f_t(\boeta^{k_0k})=a_{\boeta^{k_0k}}f_t(\boeta^{k_0k})+(1-a_{\boeta^{k_0k}})\\
=\left(a_{\boeta}f_t(\boeta^{k_0k})+(1-a_{\boeta})\right)+\left((a_{\boeta^{k_0k}}-a_{\boeta})f_t(\boeta^{k_0k})+(a_{\boeta}-a_{\boeta^{k_0k}})\right).
\end{multline*}
Using (\ref{eqn:propa}) and $|k-k_0|\leq N\sqrt{\eta}$ to bound the above second term, we are left with proving that
$$
a_{\boeta}\Im \sum_{k=1}^N\frac{1}{N}\frac{f_t(\boeta^{k_0k})}{z-\la_k}+(1-a_{\boeta})\Im \sum_{k=1}^N\frac{1}{N}\frac{1}{z-\la_k}
=
m(z)\left(a_{\boeta}f_t(\boeta\slash k_0)+(1-a_{\boeta})\right)+\OO\left( N^{\xi+n \omega}\left(\sqrt{\frac{\Im m(z)}{N\eta}}+\frac{1}{N\eta}\right)\right).
$$
The second sum above is properly estimated by $m(z)$ because we are in a good set. Concerning the first sum, 
its contribution is not trivial if $a_{\boeta}\neq 0$, in particular $k_0\in\llbracket \al N,(1-\al)N\rrbracket$.
Then $\Im m(z)\sim 1$ and this first sum can
be estimated exactly as in (\ref{eqn:condit1}), (\ref{eqn:condit2}). This concludes the proof.
\end{proof}

\subsection{Localized maximum principle.\ } The following result states that, for a typical initial conditions and a generic eigenvalue path,
the relaxation time of the bulk eigenvectors is of order at most $N^{-1+\e}$ for any small $\e>0$.

\begin{theorem}\label{thm:maxPrincipleLoc}
Let $n\in\NN$, $\al,\e>0$ be arbitrarily small constants and $t=N^{-1+\e}$.
Then there exists a constant $\omega_0$ such that the following holds.

Assume that for some  $0<\xi<\omega<\omega_0$, 
$(M_N(s))_{0\leq s\leq 1}$ is in $A_1(\bq,\omega,\xi,N)$. Assume moreover that $(\ref{eqn:finite})$ holds.
Let $f$ be a solution of the eigenvector moment flow (\ref{ve}) with initial matrix $M_N$ and path $\bla$.
Then there exists $c>0$ such that for large enough $N$ we have
\be\label{fest2}
\sup_{\boeta:\mathcal{N}(\boeta)=n,\boeta\subset\llbracket \al N,(1-\al)N\rrbracket}\left|f_t(\boeta)-1\right|\leq C N^{- c}.\nc
\ee
\end{theorem}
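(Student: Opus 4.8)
The plan is to adapt the maximum-principle argument of Theorem \ref{thm:maxPrinciple} to the modified dynamics (\ref{eqn:modMom}), but now the supremum of $g_t(\boeta)-1$ only needs to be controlled over configurations supported in the bulk $\llbracket\al N,(1-\al)N\rrbracket$, and we may exploit that $\Im m(\la_k+\ii\eta)\sim 1$ there to choose the much smaller scale $\eta=N^{-1+\e'}$. First I would reduce (\ref{fest2}) for $f_t$ to the analogous bound for $g_t$: since $g$ differs from $f$ only through the flattening/averaging at the edge and through dropping the long-range part $\mathscr{L}$, finite speed of propagation (Lemma \ref{FiniteSpeed}) and Lemma \ref{cut2} show that $f_t(\boeta)=g_t(\boeta)+\OO(\ell N^{2\om}/N+\text{exp. small})$ for $\boeta$ in the bulk, provided $\ell=N^\delta Nt$ with $\delta,\om$ small; the error $\ell N^{2\om}/N=N^{-1+\e+\delta+2\om}$ is acceptable if $\e+\delta+2\om<c$. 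So it suffices to prove $\sup_{\boeta\subset\llbracket\al N,(1-\al)N\rrbracket,\mathcal N(\boeta)=n}|g_t(\boeta)-1|\le N^{-c}$.

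Next I would run the Gronwall/maximum-principle argument on $g_t$. By finite speed of propagation the modified initial data are flat (equal to $1$) near the edge, and the short-range generator $\mathscr{S}$ cannot move a particle from the bulk to within $\ell N^\om$ of the edge in time $t$ without exponentially small cost; hence for all $s\le t$ the maximum of $g_s(\boeta)-1$ over \emph{all} configurations with $n$ particles is attained (up to exponentially small corrections) at a configuration $\boxi$ supported in $\llbracket \al N/2,(1-\al/2)N\rrbracket$. At such a maximizer one repeats the computation (\ref{eqn:fun})--(\ref{eqn:diffmult}): replace $c_{jk}=1/(N(\la_j-\la_k)^2)$ by $1/(N((\la_j-\la_k)^2+\eta^2))$ at the cost of an upward error, use Lemma \ref{ILSLFlat} (estimates (\ref{eqn:bound}) and (\ref{eqn:multibound})) to evaluate $\frac1N\sum_k \eta\, g_s(\boxi^{k_0k})/((\la_{k_0}-\la_k)^2+\eta^2)$ as $\Im m(\la_{k_0}+\ii\eta)\,(a_{\boxi}g_s(\boxi\setminus k_0)+(1-a_{\boxi}))$ plus the error terms there, and use the induction hypothesis on $n$ to replace $g_s(\boxi\setminus k_0)$ by $1$ up to $N^{-c}$. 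Since $\boxi$ is in the bulk, $a_{\boxi}=1$ and $\Im m(\la_{k_0}+\ii\eta)\ge c>0$, so with $S_s=\sup_{\mathcal N(\boeta)=n}(g_s(\boeta)-1)$ one gets
$$
S'_s\le -\frac{c}{\eta}S_s+C\left(\frac{N^{\xi+n\om}}{(N\eta)^{1/2}\eta}+\frac{N^{2\om}\ell}{N\eta}+\frac{N^{(n-1)\om+\e'-?}}{\eta}\right),
$$
where the last term is the induction error. Choosing $\eta=N^{-1+\e'}$ with $\e'$ a small multiple of $\e$ and balancing against $\ell=N^\delta Nt=N^{\e+\delta}$, all error terms are $\le N^{-c}$ after multiplication by $\eta$ (the key point being $\ell\eta\cdot N^{2\om}/N = N^{-2+2\e+\delta+2\om}\cdot N^{-1+\e'}$-type smallness and $N\eta=N^{\e'}\gg1$), and Gronwall over $[0,t]$ with $t/\eta=N^{\e-\e'}\to\infty$ (choosing $\e'<\e$) forces $S_t\le N^{-c}$; the same works for the minimum. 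The induction base $n=1$ is exactly this computation without the induction error term.

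The main obstacle, and the reason this is not a line-by-line repeat of Section \ref{sec:max}, is bookkeeping the localization: one must be sure that (a) dropping the long-range dynamics and flattening the initial condition genuinely does not affect $g_t$ in the bulk except by the claimed small errors — this is where Lemma \ref{FiniteSpeed} and Lemma \ref{cut2} are used in tandem and where the stopping time $\tau$ controlling rigidity must be tracked — and (b) the maximizer of the \emph{global} supremum $S_s$ really stays in the bulk for all $s\le t$, so that the favorable bound $\Im m\sim 1$ is available at the maximizer; this requires that no mass created near the edge in the initial flattening can overtake the bulk value, which again is finite speed of propagation plus the fact that $g_{\bla,0}\le C N^\om$ everywhere. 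Once these two points are in place, the Gronwall step is routine and the choice of parameters $\e'\ll\e$, $\delta\ll\e$, $\om,\xi\ll\e$ makes everything close; the resulting exponent $c$ can be taken to be any sufficiently small positive constant.
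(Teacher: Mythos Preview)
Your proposal is correct and follows essentially the same strategy as the paper's proof: reduce $f_t$ to $g_t$ via Lemma~\ref{cut2} plus finite speed of propagation, argue that the global maximizer of $g_s$ must lie in the bulk (otherwise $g_s-1$ is exponentially small there), and then run the maximum-principle/Gronwall argument with Lemma~\ref{ILSLFlat} and induction on $n$, choosing $\eta=N^{-1+\e'}$. Two small bookkeeping points: the error in the $f\to g$ reduction coming from Lemma~\ref{cut2} is $N^{1+\om}t/\ell=N^{\om-\delta}$, not $\ell N^{2\om}/N$ (the latter is the error from Lemma~\ref{ILSLFlat}, which enters later in the Gronwall step); and in the induction step note that (\ref{eqn:multibound}) produces $f_s(\boxi\setminus k_0)$ rather than $g_s(\boxi\setminus k_0)$, so you invoke the induction hypothesis on $f$ (equivalently on $g$ via the already-established $f\approx g$ at level $n-1$).
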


\begin{proof}
As $\al$ is arbitrary we just need to prove the result for $\al$  replaced by $3\al$. Moreover,
we only need to prove (\ref{fest2}) with $f_t(\boeta)$ replaced by $g_t(\boeta)$ solving the cutoff dynamics 
(\ref{eqn:modMom}).  Indeed, we have
$$
f_t(\boeta)-g_t(\boeta)
=
\frac{1}{\pi(\boeta)}\langle f_0,(\rU^*_{\mathscr{B}}(0,t)-\rU^*_{\mathscr{S}}(0,t))\delta_{\boeta} \rangle_\pi
+
\frac{1}{\pi(\boeta)}\langle \rU_{\mathscr{S}}(0,t)(f_0-g_0),\delta_{\boeta} \rangle_\pi.
$$
where we used the notation $(\ref{eqn:reversed})$ for the time-reversed dynamics.
From Lemma \ref{cut2}  (which holds also for the time-reversed dynamics)  and  the bound $\|f_0\|_\infty\leq N^\omega$,   the first term on the right hand side of the equation is bounded by $N^{1+\omega} t/\ell$. By  the finite speed of propagation Lemma \ref{FiniteSpeed}, the second term is exponentially small
(remember that $f_0(\boxi)-g_0(\boxi)=0$ if $\boxi\subset\llbracket 2\al N,(1-2\al)N\rrbracket$ and $\boeta$ is supported in $\llbracket 3\al N,(1-3\al)N\rrbracket$). We therefore just need to show that
$$
\sup_{\boeta:\mathcal{N}(\boeta)=n,\boeta\subset\llbracket 3\al N,(1-3\al)N\rrbracket}\left|g_t(\boeta)-1\right|\leq C N^{-\e}.
$$
We will prove that such an estimate holds for any $\al>0$ by induction on $n$. Assume there is just one particle.
Following the idea from the proof of Theorem \ref{thm:maxPrinciple}, for a given $0\leq s\leq t$ let $k_0$ be an index such that
$g_s(k_0)=\sup_k\{g_s(k)\}$. 
We consider two possible cases: if $g_s(k_0)-1\leq N^{-10}$ then there is nothing to prove. If $g_s(k_0)-1\geq N^{-10}$,
then from the finite speed of propagation  assumption (i.e., we are in the set $\mathcal{A}$),  $k_0$ is in the bulk, i.e.,  $k_0\in\llbracket \frac{\al}{2}N,(1-\frac{\al}{2})N\rrbracket$ (the reason is that if $k_0$ were near the edges, then $g_s(k_0)-1$ is exponentially small).
We then have  
\begin{multline*}
\partial_s g_s(k_0)=(\mathscr{S}(s)g_s)(k_0)=\frac{1}{N}\sum_{j\neq k_0,|j-k_0|\leq \ell}\frac{g_s(j)-g_s(k_0)}{(\la_j-\la_{k_0})^2}
\\
\leq
\frac{1}{\eta}\sum_{j\neq k_0,|j-k_0|\leq \ell}\frac{1}{N}\frac{\eta g_s(j)}{(\la_j-\la_{k_0})^2+\eta^2}
-
\frac{g_s(k_0)}{\eta}\sum_{j\neq k_0,|j-k_0|\leq \ell}\frac{1}{N}\frac{\eta}{(\la_j-\la_{k_0})^2+\eta^2}.
\end{multline*}
If $\ell\gg N\eta$ (which we obviously can assume, as we will chose $\eta=N^{-1+c}$ for some small $c>0$,
extending the above sums to all indices $j$ induces an error $\eta N^{1+\omega}/\ell$ where we have used that $\| g_s\|_\infty \le
\| g_0\|_\infty \le N^\om$. Combining this fact   with   Lemma \ref{ILSLFlat} and the  rigidity of eigenvalues which follows from that the path $\bla$ is 
assumed to be in the set $A_2(\om, N)$ defined in \eqref{eqn:condition2},  we have proved (here $z=\la_{k_0}+\ii\eta$) that
$$
\partial_s (g_s(k_0)-1)\leq -\frac{\Im m(z)}{\eta}(g_s(k_0)-1)+\OO\left(N^{\omega+\xi}\left(\frac{(\Im m(z))^{1/2}}{\eta^{3/2}N^{1/2}}+\frac{1}{N\eta^2}\right)+\frac{ \ell N^{2\omega}\nc}{N\eta}\right)+\OO\left(\frac{N^{1+\omega}}{\ell}\right).
$$
As $k_0\in \llbracket\frac{\al}{2}N,(1-\frac{\al}{2})N\rrbracket$, we have $\Im m(z)\sim 1$. Moreover,  the second error term $\OO\left(\frac{N^{1+\omega}}{\ell}\right)$ is dominated by the first one ( recall that the  cutoff parameter $\ell$ in  Lemma \ref{ILSLFlat} \nc  satisfies $\ell= N^{\delta}Nt$ and $\eta\leq N^{-1+\delta/2}$).
Denote by $S_s=\sup_k (g_s(k)-1)$ and we choose the parameters so that $\eta=N^{-1+\frac{\e}{2}}$ and $\omega_0\leq \e/10$. 
We proved that if $S_s\geq N^{-10}$ then
$$
\partial_s S_s\leq -\frac{c}{\eta}S_s+C\left(\frac{N^{\omega+\xi}}{\eta^{3/2}N^{1/2}}+\frac{\ell N^{2\omega}}{N\eta}\nc\right)
\leq -c N^{1-\frac{\e}{2}}S_s+CN^{1-3\e/4} . 
$$
 By Gronwall's lemma, we obtain $S_t=\OO(N^{-\e/4})$. This concludes the proof for $n=1$.

For general $n$, as in the 1-particle case we can assume that $\sup_{\boeta} g_t(\boeta)$ is achieved for some $\boxi\subset\llbracket\frac{\al}{2}N,(1-\frac{\al}{2})N\rrbracket$. Then the analogue of  (\ref{eqn:diffmult}) holds with $f$ replaced by $g$. The first sum in (\ref{eqn:diffmult}) then can be evaluated using (\ref{eqn:multibound}):
$$
\frac{1}{N\eta}\sum_{j\neq k_r}\frac{\eta g_s(\boxi^{k_r j})}{(\la_{k_r}-\la_j)^2+\eta^2}=\Im m(\la_{k_r}+\ii\eta)
(a_{\boxi}f_s(\boxi\backslash k_r)+(1-a_{\boxi})).
$$
From the result at rank $n-1$ with $\al$ replaced by $\al/10$, we know that for $s\in[t/2,t]$ we have
$$
f_s(\boxi\backslash k_r)=g_s(\boxi\backslash k_r)+\OO(N^{-c})=1+\OO(N^{-c}).
$$
This proves that
$$
\partial_s (g_s(\boxi)-1)\leq -\frac{\Im m(z)}{\eta}(g_s(\boxi)-1)+\OO\left(\frac{(\Im m(z))^{1/2}}{\eta^{3/2}N^{1/2}}\ell  N^{n\omega+\xi}+\frac{\Im m(z)}{\eta}N^{-c}+\frac{\ell N^{2\omega}}{N\eta}\nc\right).
$$
One now can conclude the proof as in the $n=1$ case.
\end{proof}

\noindent 
{\it Proof  of Theorem \ref{thm:deterministic}}
We can assume  that the trajectory $(M_t)_{0\leq t\leq 1}$ is in $A_1(\bq,\omega,\xi,N)\cap A_2(\omega,N)\cap A_3(\omega,N)$. Indeed, as noted in Section 4, $A_1\subset A_2\cap A_3$, and  the complement of $A_1$ has measure at most $N^{-D}$, 
which induces negligible error terms in the universality statements.
For the same reason, thanks to Lemma
\ref{FiniteSpeed}, we can assume that the following finite speed of propagation holds: for any small $c,\al>0$, uniformly $\boeta$ supported in the $\llbracket \al N,(1-\al)N\rrbracket$ and $d(\boeta,\boxi)>N^c\ell$, for large enough $N$ we have 
\begin{equation}\label{eqn:finite}
r_s(\boeta,\boxi)<e^{-N^{c/2}}.
\end{equation}
Under the above two assumptions, we apply Theorem \ref{thm:maxPrincipleLoc}, which 
proves the first statement of Theorem \ref{thm:deterministic}.
 The last two statements of Theorem \ref{thm:deterministic} easily follow by the  arguments used in Sections
\ref{secInd} and \ref{secQUE}.

\nc
\setcounter{equation}{0}
\setcounter{theorem}{0}
\renewcommand{\theequation}{A.\arabic{equation}}
\renewcommand{\thetheorem}{A.\arabic{theorem}}
\appendix
\setcounter{secnumdepth}{0}
\section[Appendix A\ \ \ Continuity estimate for $\mathrm{t\hspace{0.05cm}{\lesssim}\hspace{0.05cm}N^{-1/2}}$]
{Appendix A\ \ \ Continuity estimate for ${t\, {\lesssim}\,N^{-1/2}}$}

The main result in Section \ref{sec:relax}, Theorem \ref{thm:maxPrincipleLoc}, asserts  the asymptotic normality 
of eigenvector components for Gaussian divisible ensembles for $t\gtrsim N^{-1}$. To prove Theorem \ref{thm:main} for bulk eigenvectors,  in this appendix
we remove the small Gaussian components of the matrix elements. 
As we saw in Section \ref{sec:Proof}, one way to proceed consists in a Green function comparison theorem.
Here, we proceed in a different way: the Dyson Brownian motion preserves the local structure of generalized Wigner matrices up to time $N^{-1/2}$ (see the lemma hereafter).
This approach is much more direct and there is no need to construct moment matching matrices. 
It provides a completely dynamical proof of Theorem  \ref{thm:main} 
for bulk eigenvectors.

We remark that although this proof is very simple, the fact that the Dyson Brownian motion preserves the detailed behaviour of eigenvalues and eigenvectors is surprising and even contradictory. Consider for example the eigenvalue flow. It was proved that this spectral dynamics take very general initial data to local equilibrium for any time $t\gtrsim N^{-1}$. So how can we prove that the changes of the eigenvalues up to time $N^{-1/2}$ is less than the accuracy $N^{-1}$? The answer is that we only prove the preservation of the Dyson Brownian motion for matrix models. In other words, the matrix structure gives this preservation of the local structure.

We start with the following matrix stochastic differential equation which  is an Ornstein-Uhlenbeck version of the Dyson Brownian motion.
Let $H_t=(h_{ij}(t))$ be a symmetric $N\times N$ matrix. The 
dynamics of the matrix entries are given by the 
stochastic differential equations 
\be\label{eqn:generalDBM}
  \rd h_{ij} (t)  = \frac {\rd B_{ij}  (t) } { \sqrt N}  - \frac{1}{2 N s_{ij}} h_{ij} (t) \rd t,
\ee
where  $B$ is symmetric with  $(B_{ij})_{i\leq j}$   a family of independent Brownian motions. 
The parameter   $s_{ij} > 0$ can take any positive values, but in this paper, 
we choose $s_{ij}$ to be the variance of  $h_{ij}(0)$.
Clearly,  for any $t\geq 0$ we have $\E(h_{ij}(t)^2)=s_{ij}$ and thus the variance of the matrix element is preserved
in this flow. 
We will call this system of stochastic differential equations \eqref{eqn:generalDBM} a generalized Dyson Brownian motion. 
For this flow, the following continuity estimate  holds.

\begin{lemma}\label{DBMcont}
Suppose that  we have $c/N  \le s_{ij} \le  C /N$ 
for some fixed constants $c$ and $C$, uniformly in $i$ and $j$.  Denote  $ \partial_{ij} =  \partial_{h_{ij}}$.  Suppose that $F$ is a smooth function of the matrix elements $(h_{ij})_{i\leq j}$ satisfying
\be\label{AI}
 \sup_{0 \le s \le t, i\leq j, \b  \theta } \E \left((N^{3/2} |h_{ij}(s)^3| + \sqrt N  |h_{ij}(s)|) \big | \partial_{{ij}}^3 F  ( \b \theta H_s  )\big |\right) \le M,
\ee
where $(\b \theta H)_{ij} = \theta_{ij} h_{ij}$, $\theta_{k\ell} = 1$ unless $\{ k, \ell\} = \{ i, j \}$ and $0\le \theta_{ij} \le 1$.
Then
$$
 \E F\left(H_t\right)  -  \E F\left(H_0\right)
 = \OO( t {N^{1/2}})M.
$$ 
\end{lemma}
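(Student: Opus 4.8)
The plan is to apply It\^o's formula to $F(H_t)$ along the generalized Dyson Brownian motion \eqref{eqn:generalDBM} and to bound the resulting drift term. Writing $\rd h_{ij}(t) = N^{-1/2}\rd B_{ij}(t) - (2Ns_{ij})^{-1}h_{ij}(t)\rd t$, It\^o's formula gives
$$
\rd \E F(H_s) = \E\Bigg( \sum_{i\le j} \Big( -\frac{h_{ij}(s)}{2Ns_{ij}} \partial_{ij}F(H_s) + \frac{1}{2}\cdot\frac{b_{ij}}{N}\partial_{ij}^2 F(H_s)\Big)\Bigg)\rd s,
$$
where $b_{ij}$ is the (bounded) quadratic-variation coefficient of $B_{ij}$ coming from Definition~\ref{def:OU}. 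So the task reduces to showing the right-hand side is $\OO(N^{1/2})M$ after integration over $[0,t]$; integrating then yields $\E F(H_t) - \E F(H_0) = \OO(tN^{1/2})M$ as claimed.

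The key idea for controlling the drift is a Gaussian integration-by-parts / cumulant expansion in each variable $h_{ij}$. The point is that, because $\E(h_{ij}^2) = s_{ij} \sim N^{-1}$, the naive bound on the first-order term $h_{ij}\partial_{ij}F/(2Ns_{ij})$ is too crude: $h_{ij}$ is typically of size $N^{-1/2}$ and $1/(Ns_{ij}) = \OO(1)$, which would only give $\OO(N^{1/2})$ per entry and $\OO(N^{5/2})$ in total. Instead, one expands $\E(h_{ij}\,G)$ for $G = \partial_{ij}F(\b\theta H_s)$ (with $\theta_{ij}$ interpolating the $(i,j)$ entry to zero) using the cumulant expansion: $\E(h_{ij}G) = \E(h_{ij}^2)\E(\partial_{ij}G) + \tfrac12\E(h_{ij}^3)\E(\partial_{ij}^2 G) + \cdots$. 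The leading term $s_{ij}\E(\partial_{ij}^2 F)$ exactly cancels, up to lower order, against the It\^o second-order term $\tfrac{b_{ij}}{2N}\E(\partial_{ij}^2 F)$ (here one uses $s_{ij}$ equals the variance of $h_{ij}(0)$, hence is matched to the Brownian coefficient), and what remains is governed by the third cumulant, i.e. by $\E(h_{ij}^3\partial_{ij}^3 F)$ and a remainder estimated via the moment assumption \eqref{eqn:moments}. Each such term is bounded, by hypothesis \eqref{AI}, by $M/N$ uniformly over the interpolation parameter $\b\theta$; summing over the $\OO(N^2)$ pairs $i\le j$ gives a drift of size $\OO(NM)$...

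...which is slightly worse than the target; the final gain to $\OO(N^{1/2}M)$ after integrating over $s\in[0,t]$ comes from the extra factor $t$, but one must be careful: the statement asserts the total change is $\OO(tN^{1/2})M$, so the instantaneous drift should be $\OO(N^{1/2})M$, not $\OO(N)M$. The resolution is that the third-cumulant term carries the extra factor $N^{1/2}|h_{ij}|$ or $N^{3/2}|h_{ij}^3|$ built into \eqref{AI}: a single entry contributes $\OO(N^{-3/2})M$ rather than $\OO(N^{-1})M$ once one accounts for the true size of $h_{ij}$ together with one derivative having been ``used up'' in the cancellation, so that summing over $\OO(N^2)$ entries gives $\OO(N^{1/2})M$. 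Tracking these powers of $N$ correctly — matching the Ornstein--Uhlenbeck drift coefficient $1/(2Ns_{ij})$ against the Brownian variance coefficient so that the $\OO(N)$ terms cancel identically and only the \eqref{AI}-controlled remainder survives — is the main obstacle, and it is essentially the same bookkeeping that appears in Green function comparison arguments (cf. \cite{ErdYauYin2012Univ}). A secondary technical point is justifying the use of It\^o's formula and the interchange of expectation and time integration, which follows from \eqref{AI} and standard localization (stopping times on $\|H_s\|$), since the coefficients of \eqref{eqn:generalDBM} are smooth and the solution has all moments bounded uniformly on $[0,t]$.
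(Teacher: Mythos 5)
Your proposal is correct and follows essentially the same route as the paper: It\^o's formula for the Ornstein--Uhlenbeck flow, an expansion of $\E\left(h_{ij}\,\partial_{ij}F\right)$ in the single entry $h_{ij}$ using its independence from the other entries and $\E\left(h_{ij}(t)^2\right)=s_{ij}$, exact cancellation of the resulting second-order term against the It\^o quadratic-variation term, and a third-derivative remainder bounded per entry by $\OO(N^{-3/2})M$ via \eqref{AI}, summing to an $\OO(N^{1/2})M$ drift. The only cosmetic difference is that the paper phrases the expansion as a Taylor expansion of $\partial_{ij}F$ around $h_{ij}=0$ rather than a cumulant expansion, which is exactly why the interpolated matrix $\b\theta H$ appears in hypothesis \eqref{AI}; your intermediate mis-count of the drift as $\OO(NM)$ is self-corrected and the final power counting is right.
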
 

\begin{proof} 
By It{\^o}'s formula, we have 
$$
\partial_t \E F  \left( H_t\right) =  -\frac {1 }{2 N}  \sum_{i\leq j}\left(\frac{1}{s_{ij}}  \E \left(h_{ij}(t)   \partial_{{ij}} F(H_t)\right)  
-\E \left(\partial_{{ij}}^2  F(H_t)\right)\right).
$$
A Taylor expansion yields
\begin{align*}
\E  \left(h_{ij}(t) \partial_{{ij}} F (H_t)\right) &= \E  h_{ij}(t) \partial_{{ij}} F_{h_{ij}(t) = 0}
 +\E  \left(h_{ij}(t)^2 \partial_{ij}^2 F_{h_{ij}(t) = 0}\right) +  \OO\left(\sup_{\b \theta}\E   \left(|h_{ij}(t)^3  \partial_{{ij}}^3 F  ( \b \theta H_t)|\right) \right)\\
 & 
 =s_{ij} \E\left(\partial_{{ij}}^2 F_{h_{ij}(t) = 0}\right) +\OO\left(\sup_{\b \theta}\E   \left(|h_{ij}(t)^3  \partial_{{ij}}^3 F  ( \b \theta H_t)|\right)\right), \\
 \E\left(\partial_{{ij}}^2 F(H_t)\right)&=\E \left(\partial_{{ij}}^2 F_{h_{ij}(t)= 0}  \right)
  + \OO\left(\sup_{\b \theta}\E\left(|h_{ij}(t)(\partial_{{ij}}^3 F  (\b \theta H )|)\right)\right).
\end{align*} 
Together with the condition $c/N\leq s_{ij}\leq C/N$, we have
$$
\partial_t \E F(H_t)=  N^{1/2}\OO\left(\sup_{i\leq j,\b\theta} \E  ( N^{3/2} |h_{ij}(t)^3| + N^{1/2}  |h_{ij}(t)|)
|\partial_{{ij}}^3 F  ( \b \theta H_t)|\right).
$$
Integration over time finishes the proof. 
\end{proof}

The previous lemma implies the following eigenvalues and eigenvectors continuity estimate for the dynamics (\ref{eqn:generalDBM}).

\begin{corollary}\label{cor:continuity}
Let $\alpha>0$ be arbitrarily small, $\delta\in(0,1/2)$ and $t=N^{-1+\delta}$. 
Denote by  $H_t$ the solution of (\ref{eqn:generalDBM}) 
with   a symmetric generalized Wigner matrix $H_0$  as the initial condition. 
Let $\mu_t$ be the law of $H_t$. 
Let $m$ be 
any positive integer and $\Theta:\RR^{2m}\to\RR$ be a smooth function satisfying
\begin{equation}\label{eqn:Theta}
\sup_{k\in\llbracket 0,5\rrbracket, x\in\mathbb{R}}|\Theta^{(k)}(x)|(1+|x|)^{-C}<\infty
\end{equation}
for some $C>0$. 
Denote by $(u_1(t),\dots,u_N(t))$ the eigenvectors of $H_t$ associated with the eigenvalues $\la_1(t)\leq \dots\leq \la_N(t)$.
Then there exists $\e>0$ (depending only on $\Theta,\delta$ and $\alpha$) such that, for large enough $N$,
$$
\sup_{I\subset\llbracket \alpha N,(1-\alpha) N\rrbracket, |I|=m,|\bq|=1}\left|
(\E^{\mu_t}-\E^{\mu_0})\Theta\left(
(N(\la_k-\gamma_k),N \langle\bq,u_k\rangle^2)_{k\in I}
\right)
\right|\leq N^{-\e}.
$$ 
\end{corollary}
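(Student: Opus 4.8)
The plan is to apply Lemma \ref{DBMcont} directly, taking $F$ to be the function $H \mapsto \Theta\big((N(\la_k(H)-\gamma_k), N\langle\bq,u_k(H)\rangle^2)_{k\in I}\big)$, where $\la_k(H)$ and $u_k(H)$ denote the eigenvalue/eigenvector data of $H$. The conclusion $\E^{\mu_t}F - \E^{\mu_0}F = \OO(tN^{1/2})M$ would then give the corollary with room to spare, since $tN^{1/2} = N^{-1/2+\delta}$ which is $N^{-\e}$ for $\delta < 1/2$, \emph{provided} we can show the constant $M$ in hypothesis \eqref{AI} is at most $N^{C'}$ for a fixed $C'$ with $C' + \delta - 1/2 < 0$; we are free to shrink $\delta$. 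So the whole content is a bound on $\sup_{\b\theta}\E\big((N^{3/2}|h_{ij}^3| + N^{1/2}|h_{ij}|)|\partial_{ij}^3 F(\b\theta H_s)|\big)$.

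First I would record that the moment conditions \eqref{eqn:moments} give $\E(|\sqrt N h_{ij}(s)|^p) \le C_p$ for all $s\in[0,1]$ along the generalized Dyson Brownian motion \eqref{eqn:generalDBM} (the variance is preserved exactly and higher moments stay bounded by a Gr\"onwall/Burkholder argument), so the prefactors $N^{3/2}|h_{ij}^3|$ and $N^{1/2}|h_{ij}|$ are $\OO(1)$ in every $L^p$. Hence by Cauchy--Schwarz it suffices to control $\E\big(|\partial_{ij}^3 F(\b\theta H_s)|^2\big)^{1/2}$ by a fixed power of $N$, uniformly in $i\le j$, in $\b\theta$ (which only rescales one matrix entry by a factor in $[0,1]$, so $\b\theta H_s$ is again a matrix with the same semicircle-law control), and in $s\in[0,t]$. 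The key step is therefore a deterministic bound on third derivatives in a single matrix entry of the observable $\Theta\big((N(\la_k-\gamma_k),N\langle\bq,u_k\rangle^2)_{k\in I}\big)$ on the event where the isotropic local semicircle law and rigidity hold (Theorem \ref{thm:ILSC} and \eqref{eqn:optRig}), the complement having probability $N^{-D}$ for any $D$ and contributing negligibly.

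On that good event, I would use first-order perturbation theory: writing $\partial_{ij}$ for $\partial_{h_{ij}}$ acting on a Wigner matrix with the standard symmetric increment $\Delta_{ij} = E_{ij}+E_{ji}$ (or $E_{ii}$ on the diagonal), one has the classical formulas $\partial_{ij}\la_k = u_k(i)u_k(j) + u_k(j)u_k(i)$ and $\partial_{ij} u_k = \sum_{\ell\ne k}\frac{u_k(i)u_\ell(j)+u_k(j)u_\ell(i)}{\la_k-\la_\ell} u_\ell$, and similar but longer expressions for the second and third derivatives, all of which are finite sums of terms built from eigenvector overlaps $\langle\bq,u_\ell\rangle$, coordinates $u_\ell(i)$, and resolvent-type denominators $(\la_k-\la_\ell)^{-1}$. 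The denominators are handled by bounding $\sum_{\ell\ne k}(\la_k-\la_\ell)^{-2}$ via rigidity \eqref{eqn:condition2} (this sum is $\OO(N^{2+\epsilon})$ in the worst case, but the relevant combinations that appear are much better because they can be resummed into $\Im\langle\bq,G(\la_k+\ii\eta)\bq\rangle$-type quantities at scale $\eta\sim N^{-1}$), while eigenvector coordinates and overlaps are each $\OO(N^{-1+\omega})$ and $\OO(N^{-1+\omega})$ respectively by isotropic delocalization. Multiplying by the $N$ in front of each argument and using the polynomial growth bound \eqref{eqn:Theta} on derivatives of $\Theta$, one gets $|\partial_{ij}^3 F| \le N^{C'}$ on the good event for a fixed $C'$ depending only on $m$ and $C$; choosing $\delta < 1/(2(C'+1))$ then closes the estimate and finishes the proof.

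The main obstacle is the bookkeeping of the third-derivative perturbation formula for eigenvectors and the careful counting of powers of $N$: one must verify that no term accumulates a power of $N$ worse than some fixed $N^{C'}$, which requires handling the small denominators $(\la_k - \la_\ell)^{-1}$ cleanly. The efficient way to do this, rather than bounding each term individually with rigidity, is to re-express the dangerous sums as contour integrals / imaginary parts of the Green function $G(z)$ evaluated at $\eta = \Im z \sim N^{-1}$, so that isotropic local semicircle law (Theorem \ref{thm:ILSC}) and \eqref{eqn:optRig} can be applied with $z\in\b S$; the minimal gap between consecutive eigenvalues, which is $\gtrsim N^{-1-\epsilon}$ with high probability, controls the remaining worst case. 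This is routine but is where all the real work lies; everything else (the Cauchy--Schwarz split, the moment bounds, discarding the bad event) is standard.
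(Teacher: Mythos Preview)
Your approach has a genuine gap, and in fact the paper explicitly anticipates and rejects it: the proof begins by noting that one may try to apply Lemma~\ref{DBMcont} directly with $F(H)=\Theta\big((N(\la_k-\gamma_k),N\langle\bq,u_k\rangle^2)_{k\in I}\big)$, ``but the third derivative of this function seems hard to bound.'' The difficulty is not bookkeeping; it is that the bound \eqref{AI} genuinely fails.

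The concrete obstruction is this. Iterating the perturbation formula $\partial_{ij}u_k=\sum_{\ell\ne k}\frac{u_k(i)u_\ell(j)+u_k(j)u_\ell(i)}{\la_k-\la_\ell}u_\ell$ three times produces, among other things, terms proportional to $(\la_k-\la_\ell)^{-3}$ for the nearest neighbour $\ell=k\pm1$. You propose to handle small denominators by restricting to the event $\{|\la_k-\la_{k+1}|\gtrsim N^{-1-\epsilon}\}$ and discarding its complement. But that complement is \emph{not} of probability $N^{-D}$ for arbitrary $D$: the level repulsion estimate (Definition~\ref{lr}) only gives $\P(|\la_k-\la_{k+1}|<N^{-1-\alpha})\le N^{-\alpha-\delta}$, i.e.\ a small power. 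And on that complement $\partial_{ij}^3F$ is unbounded (indeed it blows up as eigenvalues collide), so its contribution to the expectation in \eqref{AI} cannot be discarded by a crude $\|F\|_\infty$ bound. More quantitatively, for $\beta=1$ ensembles the bulk gap $N|\la_k-\la_{k+1}|$ has a density vanishing only linearly at $0$, so $\E\big[(\la_k-\la_{k+1})^{-p}\big]=\infty$ for every $p\ge2$; the Cauchy--Schwarz split you suggest would require $p=6$. Your Green-function resummation idea handles sums of the type $\sum_\ell(\la_k-\la_\ell)^{-2}$, but not the cubic singularity from a single nearest neighbour. A secondary issue: $\delta\in(0,1/2)$ is a hypothesis of the corollary, not a parameter you may shrink.

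The paper's route avoids all of this by applying Lemma~\ref{DBMcont} not to the eigenvector observable directly but to $F(H)=\langle\bq,G(z)\bq\rangle$ (and products thereof) at $\eta=\Im z\gtrsim N^{-1-\xi}$. These are globally smooth functions of the matrix entries; one has the exact identity $\partial_{ij}^3\langle\bq,G(z)\bq\rangle=-\sum_{\b\alpha,\b\beta}\langle\bq,G\rangle_{\alpha_1}G_{\beta_1\alpha_2}G_{\beta_2\alpha_3}\langle G,\bq\rangle_{\beta_3}$ where each factor is controlled by the isotropic local semicircle law (Theorem~\ref{thm:ILSC}) and a dyadic argument to reach $\eta$ slightly below $N^{-1}$. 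This gives the Green-function comparison (condition~(i) in the proof), and together with the level repulsion estimate (condition~(ii)) one then invokes the Knowles--Yin reduction from \cite{KnoYin2011}, Section~5, to pass from Green functions back to the eigenvalue/eigenvector observable. The level repulsion is used there in a much milder way (to localize the spectral information near $\la_k$), not to control an expectation of inverse gaps.
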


\begin{proof}
One may try to apply Lemma \ref{DBMcont} directly for $F(H)=(\bla,\bu)$, but the third derivative of this function seems hard to bound.  Instead, we can
prove the continuity estimate when $F$ is a product of Green functions of $H$, which in turn  implies the continuity estimate for eigenvalues and eigenvectors. In the following, the fact that (i) and (ii) imply (\ref{eqn:result}) relies on classical techniques \cite{KnoYin2011}. The crucial condition is (i), i.e., comparison of Green functions up to some scale smaller than microscopic, $\eta=N^{-1-\e}$. In Section \ref{sec:Proof} such a comparison was shown by moment matching. Hereafter, Lemma \ref{DBMcont} allows to prove this Green function comparison by a dynamic approach.

Let $\bv$ and $\bw$ refer to two generalized Wigner ensembles. 
Consider the following statements.
\begin{enumerate}[(i)]
\item Green functions comparison up to a very small scale. 
For any $\kappa>0$ there exists $\xi,\e>0$ such that
for any $N^{-1-\xi}<\eta<1$ and  any smooth function $F$ with polynomial growth,  we have
$$
\sup_{|\bq|=1,E_1,\dots,E_m\in(-2+\kappa,2-\kappa)^m}\left|
(\E^{\bv}-\E^{\bw})F\left((\langle \bq,G(z_k)\bq\rangle)_{k=1}^m\right)
\right|\leq C N^{-\e}\left(\frac{1}{N\eta}+\frac{1}{\sqrt{N\eta}}\right),
$$ 
for some $C=C(\kappa,F)>0$. Here $z_k=E_k+\ii\eta$.

\item Level repulsion estimate. For both ensembles $\bv$ and $\bw$ and for any $\kappa>0$ the following holds. There exists $\xi_0>0$ such that for any $0<\xi<\xi_0$ there exists $\delta>0$ satisfying
$$
\P\left(|\{\la_i\in[E-N^{-1-\xi},E+N^{-1-\xi}]\}|\geq 2\right)\leq N^{-\xi-\delta},
$$
for any $E\in(-2+\kappa,2-\kappa)$. Here the probability measure can be either  the ensemble $\bv$ or $\bw$.
\end{enumerate}

From Section 5 in \cite{KnoYin2011}, if (i) and (ii) hold then
for any $\al>0$ and $\Theta$ satisffying (\ref{eqn:Theta})
there exists $\e>0$ such that for large enough $N$ we have
\begin{equation}\label{eqn:result}
\sup_{I\subset\llbracket \alpha N,(1-\alpha) N\rrbracket, |I|=m,|\bq|=1}\left|
(\E^{\bv}-\E^{\bw})\Theta\left(
(N(\la_k-\gamma_k),N \langle\bq,u_k\rangle^2)_{k\in I}
\right)
\right|\leq N^{-\e}.
\end{equation}
The level repulsion condition condition (ii) was proved in the generalized Wigner context \cite[equation (5.32)]{ErdYau2012singlegap}. We therefore only need to check the main assumption (i), which is a consequence of Lemma \ref{DBMcont} and the isotropic local semicircle law, Theorem \ref{thm:ILSC}.
Indeed, we need to find a good bound $M$ in (\ref{AI}) for a function $F$ of type given in (i).
For simplicity we only consider the case
$$
F(H)=\langle\bq,G(z)\bq\rangle,  
$$
where $z=E+\ii\eta$ with $N^{-1-\xi}<\eta<1$ and $-2+\kappa<E<2-\kappa$.
The general case
$$
F(H)=\langle\bq_1,G(z_1)\bq_1\rangle \ldots \langle\bq_k ,G(z_k)\bq_k\rangle
$$
is  analogous.   We have
$$
\partial_{ij}^3 \langle \bq, G(z)  \bq\rangle =-\sum_{a, b} \sum_{\b \alpha, \b \beta} q_a G(z)_{a \alpha_1} G(z)_{\beta_1, \alpha_2} G(z)_{\beta_2, \alpha_3} G(z)_{\beta_3, b}  \, q_b
$$
where $\{\alpha_k, \beta_k\} = \{i, j\}$ or $\{j, i\}$. From the isotropic local semicircle law (\ref{ISSC_estimate}) the following four expressions 
$$
\sum_{a}  q_a \, G(z)_{a \alpha_1}, \; G(z)_{\beta_1, \alpha_2},  \; 
G(z)_{\beta_2, \alpha_3},  \;   \sum_b G(z)_{\beta_3, b} \, q_b
$$
are  bounded by $N^{2\xi}((N\eta)^{-1}+(N\eta)^{-1/2})$ with very high probability provided that $N^{-1+ \xi}\leq \eta\leq 1$.  
Moreover, by a dyadic argument explained in \cite{ErdYauYin2012Univ} Section 8, we have for any $y\leq \eta$
$$
|\langle\bq, G(E+\ii y)\bq\rangle|\leq C\log N\frac{\eta}{y}\Im\langle\bq, G(E+\ii\eta)\bq\rangle.
$$
Consequently, we proved that uniformly in $E\in(-2+\kappa,2-\kappa)$, $N^{-1-\xi}\leq \eta\leq 1$, we have
$$
\partial_{ij}^3 \langle \bq, G(E+\ii\eta)  \bq\rangle=\OO(N^{5\xi}(N\eta)^{-1}+(N\eta)^{-1/2}))
$$
with very high probability. 
The hypothesis (\ref{AI}) therefore holds with $M=C(\e) N^{5\xi}((N\eta)^{-1}+(N\eta)^{-1/2})$. As $\xi$ is arbitrarily small, Lemma \ref{DBMcont}  proves that for any $\delta\in(0,1/2)$ and $t=N^{-1+\delta}$ there exists some $\e>0$ 
with 
$$
|\E F(H_t)-\E F(H_0)|\leq N^{-\e}((N\eta)^{-1}+(N\eta)^{-1/2}).
$$
Thus  assumption (i) holds and the Corollary is proved.
\end{proof}

To complete the proof of Theorem \ref{thm:main} for bulk eigenvectors by a dynamical approach, we proceed as follows.
Let $H_0$ be a generalized Wigner matrix.  For $\delta\in(0,1/2)$ and $t=N^{-1+\delta}$, let
$H_t$ be the solution of (\ref{eqn:generalDBM}) at time $t$. On the one hand, from Corollary \ref{cor:continuity} we have
$$
\sup_{I\subset\llbracket \al N,(1-\al)N\rrbracket,|I|=m, |\bq|=1}
\left|\E \left(P\left((N\langle \bq,u_k(t) \rangle^2)_{k\in I}\right)  \right)-\E \left(P\left((N\langle \bq,u_k\rangle^2)_{k\in I}\right)  \right)\right|\leq N^{-\e}.
$$
On the other hand, the entry $h_{ij}(t)$ of $H_t$ is distributed as
\begin{equation}\label{eqn:hij}
e^{-\frac{t}{2Ns_{ij}}}h_{ij}(0)+\left(s_{ij}\left(1-e^{-\frac{t}{N s_{ij}}}\right)\right)^{1/2}\mathscr{N}^{(ij)}
\end{equation}
where $(\mathscr{N}^{(ij)})_{i\leq j}$ are independent standard Gaussian random variables.
 For any $\nu<\frac{1}{2}\inf_{i,j}s_{ij}\left(1-e^{-\frac{t}{N s_{ij}}}\right)$,
let $W_0$ be a random matrix with entry $(W_0)_{ij}$ distributed as
\begin{align*}
&e^{-\frac{t}{2Ns_{ij}}}h_{ij}(0)+\left(s_{ij}\left(1-e^{-\frac{t}{N s_{ij}}}\right)-\nu\right)^{1/2}\mathscr{N}^{(ij)}_1\ \ \ \mbox{if $i\neq j$},\\
&e^{-\frac{t}{2Ns_{ij}}}h_{ij}(0)+\left(s_{ij}\left(1-e^{-\frac{t}{N s_{ij}}}\right)-2\nu\right)^{1/2}\mathscr{N}^{(ij)}_1\ \ \ \mbox{if $i=j$},
\end{align*}
where $(\mathscr{N}_1^{(ij)})_{i\leq j}$ are independent standard Gaussian random variables, independent from $H_0$. Then 
$W_0$ is a generalized Wigner matrix modulo scaling: for any $i$ we have $\sum_j\var(W_0)_{ij}=1-(N+1)\nu$. Moreover
from (\ref{eqn:hij}) $h_{ij}(t)$ is distributed as 
\begin{align*}
&(W_0)_{ij}+
\nu^{1/2}\mathscr{N}^{(ij)}_2\ \ \ \mbox{if $i\neq j$},\\
&(W_0)_{ij}+
(2\nu)^{1/2}\mathscr{N}^{(ij)}_2\ \ \ \mbox{if $i=j$},
\end{align*}
where $(\mathscr{N}_2^{(ij)})_{i\leq j}$ are independent standard Gaussian random variables, independent of $W_0$. 
This proves that $H_t$ is distributed as $W_{t'}$, where $(W_s)_{s\geq 0}$ satisfies ({\ref{eqn:SymSDE}) and $t'=N\nu$.
We choose $\nu=N^{-2+\xi}$ for some $\xi\in(0,1)$ 
and apply Theorem \ref{thm:maxPrincipleLoc}  to $W_{t'}$: this yields
$$
\sup_{I\subset\llbracket \al N,(1-\al)N\rrbracket,|I|=m, |\bq|=1}
\left|\E \left(P\left((N\langle \bq,u_k (t) \rangle^2)_{k\in I}\right)  \right)-\E P\left((\mathscr{N}_j^2)_{j=1}^m\right)\right|\leq N^{-\e}.
$$
We have thus proved Theorem \ref{thm:main} by a dynamic approach,  in the bulk case.

\nc
\setcounter{equation}{0}
\setcounter{theorem}{0}
\renewcommand{\theequation}{B.\arabic{equation}}
\renewcommand{\thetheorem}{B.\arabic{theorem}}
\appendix
\setcounter{secnumdepth}{0}
\section{Appendix B\ \ \ Generator of the Dyson vector flow}

\subsection{B.1\ \ \ Proof of Theorem \ref{thm:PCE}.\ }
We first consider the symmetric case.

$(a)$ For any $\e>0$, let $\tau_\e=\inf\{t\geq 0\mid |\lambda_i-\lambda_j|=\e\ \mbox{for some } i\neq j\ or |\lambda_i|=\e^{-1}\  \mbox{for some } i\}$ and $\phi_\e$
be a sooth function on $\RR$ such that $\phi_\e(x)=x^{-1}$ if $x\geq \e$. Then, as all of the following coefficients
are Lipschitz, pathwise existence and uniqueness holds for the system of stochastic differential equations
\begin{align*}
\rd\la_k&=\frac{\rd B^{(s)}_{kk}}{\sqrt{N}}+\frac{1}{N}\sum_{\ell\neq k}\phi_\e(\la_k-\la_\ell)\rd t,\\
\rd u_k&=\frac{1}{\sqrt{N}}\sum_{\ell\neq k}(\rd B^{(s)}_{k\ell})\phi_\e(\lambda_k-\lambda_\ell)u_\ell
-\frac{1}{2N}\sum_{\ell\neq k}\phi_\e(\la_k-\la_\ell)^2u_k\rd t.
\end{align*}
Consequently, if one can prove that $\tau_\e\to\infty$ almost surely as $\e\to0$ , then existence and strong uniqueness 
for the system (\ref{eqn:eigenvaluesSymmetric}), (\ref{eqn:eigenvectorsSymmetric})
easily follow.
This non-explosion nor collision result follows from Proposition 1 in \cite{RogShi1993}.
It immediately yields $\bla_t\in\Sigma_N$ for ant $t\geq 0$.

To prove that $\boldu_t\in \OO(N)$ for any $t\geq 0$, we consider the stochastc differential equations satisfied by $u_i\cdot u_j$, $1\leq i\leq j\leq N$. It\^o's formula yields
\begin{align*}
\rd(u_i\cdot u_j)&=\frac{1}{\sqrt{N}}\sum_{k\not\in\{i,j\}}\left(\frac{\rd B^{(s)}_{jk}}{\la_j-\la_k}u_i\cdot u_k+\frac{\rd B^{(s)}_{ik}}{\la_i-\la_k}u_j\cdot u_k\right)+\frac{1}{\sqrt{N}}\frac{\rd B^{(s)}_{ji}}{\la_j-\la_i}(|u_i|^2-|u_j|^2)\\
& \ \ -\frac{1}{2N}\left(\sum_{k\neq j}\frac{1}{(\la_j-\la_k)^2}+\sum_{k\neq i}\frac{1}{(\la_i-\la_k)^2}+
\frac{1}{(\la_i-\la_j)^2}\right)u_i\cdot u_j\rd t,\ i\neq j,\\
\rd(|u_i|^2)&=\frac{2}{\sqrt{N}}\sum_{k\neq i}\frac{\rd B^{(s)}_{ik}}{\la_i-\la_k}u_i\cdot u_k+\frac{1}{N}
\sum_{k\neq i}\frac{|u_k|^2-|u_i|^2}{(\lambda_i-\lambda_k)^2}.
\end{align*}
For the same reason as previously, existence and strong uniqueness hold for the above system, 
and $u_i\cdot u_j=0$ ($i\neq j$), $|u_i|^2=1$ is an obvious solution (remember that $\boldu_0\in \OO(N)$), which completes the proof.

(b) Let $\tilde H^{(s)}_t=\boldu_t\bla_t\boldu_t^*$. On the one hand, It\^o's formula gives
\begin{equation}\label{eqn:evol}
\rd \tilde H^{(s)}_{km}=(\boldu\bla(\rd\boldu)^*+\boldu(\rd \bla)\boldu^*+(\rd\boldu)\bla\boldu^*)_{km}+
\sum_{\ell\neq s}\frac{1}{N}\frac{\la_\ell}{(\la_s-\la_\ell)^2}u_s(k)u_s(m)\rd t.
\end{equation}
On the other hand, the evolution equations for $\bla$ and $\boldu$ is
\begin{align*}
\rd\bla&=\rd M_{\bla}+\rd D_{\bla}&(\rd M_{\bla})_{ij}=\frac{\rd B^{(s)_{ii}}}{\sqrt{N}}\mathds{1}_{i=j},\ 
(\rd D_{\bla})_{ij}=\left(\frac{1}{2}\sum_{\ell\neq i}\frac{1}{\la_i-\la_\ell}\right)\rd t\1_{i=j},\\
\rd \boldu&=\boldu(\rd M_{\boldu}+\rd D_{\boldu})&(\rd M_{\boldu})_{ij}=\frac{1}{\sqrt{N}}\frac{\rd B^{(s)}_{ij}}{\lambda_i-\lambda_j}\1_{i\neq j},\ 
(\rd D_{\boldu})_{ij}=-\frac{1}{2N}\sum_{\ell\neq i}\frac{\rd t}{(\la_i-\la_j)^2}\1_{i\neq j}.
\end{align*}
Consequently, after defining the diagonal matrix process $D$ by
$$
(\rd D)_{ij}=\frac{1}{N}\sum_{\ell\neq i}\frac{\la_\ell}{(\la_\ell-\la_i)^2}\rd t\1_{i=j},
$$
the equation (\ref{eqn:evol}) can be written
$$
\rd \tilde S=\boldu(\bla(\rd M_{\boldu})^*+(\rd M_{\boldu})\bla+\rd M_{\bla})\boldu^*+
\boldu(\bla(\rd D_{\boldu})^*+(\rd D_{\boldu})\bla+\rd D_{\bla}+\rd D)\boldu^*.
$$
We have $\bla(\rd M_{\boldu})^*+(\rd M_{\boldu})\bla+\rd M_{\bla}=\frac{1}{\sqrt{N}}\rd B^{(s)}$ and
$\bla(\rd D_{\boldu})^*+(\rd D_{\boldu})\bla+\rd D_{\bla}+\rd D=0$, so
$$
\rd \tilde S=\frac{1}{\sqrt{N}}\boldu(\rd B^{(s)})\boldu^*.
$$
As $\boldu_t\in\OO(N)$ almost surely for any $t\geq 0$, by L\'evy's criterion, the process $M$ defined by $M_0=0$ and $\rd M_t=\boldu(\rd B^{(s)})\boldu^*$ is 
a symmetric Dyson Brownian motion. This concludes the proof: $(\tilde H^{(s)}_t)_{t\geq 0}$ and  
$(H^{(s)}_t)_{t\geq 0}$ have the same law, as they are both solution of the same stochastic differential equation, for which weak uniqueness holds.

(c) Existence and strong uniqueness for (\ref{eqn:eigenvaluesSymmetric}) has a proof strictly identical to (a).
For a given continuous trajectory $(\bla_t)_{t\geq 0}\subset\Sigma_N$, existence and strong uniqueness for (\ref{eqn:eigenvectorsSymmetric}) is elementary, because $\sup_{t\in[0,T],i\neq j}|\lambda_i-\lambda_j|^{-1}<\infty$ and the coefficients are Lipschitz for any given $t\in[0,T]$.

Let $\bla'$ be the solution of (\ref{eqn:eigenvaluesSymmetric}), and
$(\boldu^{(\bla')}_t)_{t\geq 0}$ be the solution of (\ref{eqn:eigenvectorsSymmetric}) for given $\bla'$.
If the initial conditions match, we have
\begin{equation}\label{eqn:trivial}
\P((\bla'_t,\boldu^{(\bla')}_t)=(\bla_t,\boldu_t)\ \mbox{for all}\ t\geq 0)=1,
\end{equation}
because $(\bla'_t,\boldu^{(\bla')}_t)$ is a solution of the system of stochastic differential equations
 (\ref{eqn:eigenvaluesSymmetric},\ref{eqn:eigenvectorsSymmetric}) for which strong uniqueness holds. 
Equations (\ref{eqn:trivial}) together with $(b)$ yields
\begin{equation}\label{eqn:test}
\E(F((H^{(s)}_t)_{0\leq t\leq T}))=\E(F((\boldu^{(\bla')}_t\bla'_t(\boldu^{(\bla')}_t)^*)_{0\leq t\leq T})).
\end{equation}
As strong uniqueness holds, $(\bla'_t)_{0\leq t\leq T}$ is a measurable function (called $f$) of 
$((B^{(s)}_{ii})_{0\leq t\leq T})_{i=1}^N$, and $(\boldu^{\bla'}_t)_{0\leq t\leq T}$ is a measurable function of
$((B^{(s)}_{ij})_{0\leq t\leq T})_{i<j}$ and $(\bla'_t)_{0\leq t\leq T}$ (called $g$). We therefore have (for some Wiener measures $\omega_1,\omega_2$) for any bounded continuous function $G$
\begin{multline*}
\E(G((\bla'_t)_{0\leq t\leq T},(\boldu^{(\bla')}_t)_{0\leq t\leq T}))=\iint\rd\omega_1(B_1)\rd\omega_2(B_2)G(f(B_1),g(f(B_1),B_2))
\\=\iint\rd \nu_T(\bla)\rd\omega_2(B_2)G(\bla,g(\bla,B_2))
=\iint\rd \nu_T(\bla)\rd\mu_T(\boldu^{(\bla)})G(\bla,\boldu^{(\bla)}).
\end{multline*}
Together with (\ref{eqn:test}), this concludes the proof. We used the independence of the diagonal of $B^{(s)}$ with the other entries in the first equality above.

\subsection{B.2\ \ \ Proof of Lemma \ref{lem:generator}.\ }
We consider the Hermitian setting, the symmetric one being slightly easier. 
Let $f$ be a smooth function of the matrix entries, $u_{k}(\alpha)=x_{k\alpha}+\ii y_{k\alpha}$, $1\leq k,\alpha\leq N$. We denote 
$\langle\cdot,\cdot\rangle'=(\rd/\rd t)\langle\cdot,\cdot\rangle$.
It\^o's formula yields
\begin{align*}
\frac{\rd}{\rd t} \E(f)&=\E(\mbox{(I)}+\mbox{(II)}+\mbox{(III)}),\\
\mbox{(I)}&=\sum_{k,\al}(-\frac{1}{2}\sum_{\ell\neq k}c_{k\ell})(x_{k\al}\partial_{x_{k\al}}+y_{k\al}\partial_{y_{k\al}})f,\\
\mbox{(II)}&=\frac{1}{2}\sum_{k,\al,\beta}\left(\langle x_{k\al},x_{k\beta}\rangle'\partial_{x_{k\al}x_{k\beta}}
+\langle y_{k\al},y_{k\beta}\rangle'\partial_{y_{k\al}y_{k\beta}}
+\langle x_{k\al},y_{k\beta}\rangle'\partial_{x_{k\al}y_{k\beta}}
+\langle y_{k\al},x_{k\beta}\rangle'\partial_{y_{k\al}x_{k\beta}}\right)f,\\
\mbox{(III)}&=
\sum_{k<\ell,\al,\beta}\left(\langle x_{k\al},x_{\ell\beta}\rangle'\partial_{x_{k\al}x_{\ell\beta}}
+\langle y_{k\al},y_{\ell\beta}\rangle'\partial_{y_{k\al}y_{\ell\beta}}
+\langle x_{k\al},y_{\ell\beta}\rangle'\partial_{x_{k\al}y_{\ell\beta}}
+\langle y_{k\al},x_{\ell\beta}\rangle'\partial_{y_{k\al}x_{\ell\beta}}\right)f.
\end{align*}
Substituting $\partial_x=\partial_u+\partial_{\overline{u}}$ and $\partial_y=\ii(\partial_u-\partial_{\overline{u}})$ gives
\begin{align*}
\mbox{(I)}&=-\frac{1}{2}\sum_{k<\ell,\al}c_{k\ell}\left(u_k(\al)\partial_{u_k(\al)}+\overline{u}_k(\al)\partial_{\overline{u}_k(\al)}+
u_\ell(\al)\partial_{u_\ell(\al)}+\overline{u}_\ell(\al)\partial_{\overline{u}_\ell(\al)}\right)f\\
&=-\frac{1}{2}\sum_{k<\ell}c_{k\ell}\left(u_k\partial_{u_k}+\overline{u}_k\partial_{\overline{u}_k}+
u_\ell\partial_{u_\ell}+\overline{u}_\ell\partial_{\overline{u}_\ell}\right)f.
\end{align*}
Moreover, from the stochastic differential equation (\ref{eqn:eigenvectorsHermitian}), we obtain
$$
\langle x_{k\al},x_{k\beta}\rangle'=\langle y_{k\al},y_{k\beta}\rangle'=\frac{1}{2}\sum_{\ell\neq k}c_{k\ell}\Re(u_\ell(\al)\overline{u}_\ell(\beta)),\langle x_{k\al},y_{k\beta}\rangle'=-\langle y_{k\al},x_{k\beta}\rangle'=
\frac{1}{2}\sum_{\ell\neq k}c_{k\ell}\Im(\overline{u}_\ell(\al)u_\ell(\beta)).
$$
It implies that
\begin{align*}
\mbox{(II)}&=\frac{1}{2}\sum_{k<\ell,\al,\beta}c_{k\ell}\left(
u_\ell(\al)\overline{u}_\ell(\beta)\partial_{u_k(\al)\overline{u}_k(\beta)}+
\overline{u}_\ell(\al)u_\ell(\beta)\partial_{\overline{u}_k(\al)u_k(\beta)}+
u_k(\al)\overline{u}_k(\beta)\partial_{u_\ell(\al)\overline{u}_\ell(\beta)}+
\overline{u}_k(\al)u_k(\beta)\partial_{\overline{u}_\ell(\al)u_\ell(\beta)}\right)f\\
&=
\frac{1}{2}\sum_{k<\ell}c_{k\ell}\left(
u_\ell\partial_{u_k}\overline{u}_\ell\partial_{\overline{u}_k}+
\overline{u}_\ell\partial_{\overline{u}_k}u_\ell\partial_{u_k}+
\overline{u}_k\partial_{\overline{u}_\ell}u_k\partial_{u_\ell}+
u_k\partial_{u_\ell}\overline{u}_k\partial_{\overline{u}_\ell}
\right)f.
\end{align*}
Finally, concerning the term (III), a calculation yields, for $k\neq \ell$,
$$
\langle x_{k\al},x_{\ell\beta}\rangle'=-\langle y_{k\al},y_{\ell\beta}\rangle'=-\frac{1}{2}c_{k\ell}\Re(u_{\ell}(\al)u_{k}(\beta)),
\langle x_{k\al},y_{\ell\beta}\rangle'=\langle x_{\ell\beta},y_{k\alpha}\rangle'=-\frac{1}{2}c_{k\ell}\Im(u_{\ell}(\al)u_{k}(\beta)).
$$
We therefore get
\begin{align*}
\mbox{(III)}&=-\frac{1}{2}\sum_{k<\ell,\al,\beta}c_{k\ell}\left(
u_\ell(\al)u_k(\beta)\partial_{u_k(\al)u_\ell(\beta)}+
\overline{u}_\ell(\al)\overline{u}_k(\beta)\partial_{\overline{u}_k(\al)\overline{u}_\ell(\beta)}+
u_k(\al)u_\ell(\beta)\partial_{u_\ell(\al)u_k(\beta)}+
\overline{u}_k(\al)\overline{u}_\ell(\beta)\partial_{\overline{u}_\ell(\al)\overline{u}_k(\beta)}
\right)f\\
&=
-\frac{1}{2}\sum_{k<\ell}c_{k\ell}
\left(
u_\ell\partial_{u_k}u_k\partial_{u_\ell}
-u_\ell\partial_{u_\ell}
+
\overline{u}_\ell\partial_{\overline{u}_k}\overline{u}_k\partial_{\overline{u}_\ell}
-\overline{u}_\ell\partial_{\overline{u}_\ell}+
u_k\partial_{u_\ell}u_\ell\partial_{u_k}
-u_k\partial_{u_k}
+
\overline{u}_k\partial_{\overline{u}_\ell}\overline{u}_\ell\partial_{\overline{u}_k}
-\overline{u}_k\partial_{\overline{u}_k}
\right)f.
\end{align*}
Gathering our estimates for (I), (II) and (III) yields
$$
\frac{\rd}{\rd t}\E(f)=
\frac{1}{2}\sum_{k<\ell}c_{k\ell}\E\left(\left(\left(u_k\partial_{u_\ell}-\overline{u}_\ell\partial_{\overline{u}_k}\right)
\left(\overline{u}_k\partial_{\overline{u}_\ell}-u_\ell\partial_{u_k}\right)+
\left(\overline{u}_k\partial_{\overline{u}_\ell}-u_\ell\partial_{u_k}\right)
\left(u_k\partial_{u_\ell}-\overline{u}_\ell\partial_{\overline{u}_k}\right)
\right)f\right).
$$
This completes the proof.

\setcounter{equation}{0}
\setcounter{theorem}{0}
\renewcommand{\theequation}{C.\arabic{equation}}
\renewcommand{\thetheorem}{C.\arabic{theorem}}
\appendix
\setcounter{secnumdepth}{0}
\section{Appendix C\ \ \ Covariance matrices}

Because of motivations in statistics, we will only define the eigenvector moment flow for  real-valued covariance matrices. 
The eigenvector dynamics were already considered in   \cite{Bru1989}. The normalization
constants follow our convention and are different  from \cite{Bru1989}.

Let $B$ be a $M\times N$ real matrix Brownian motion: $B_{ij} (1\leq i\leq N, 1\leq j\leq M)$ are independent standard Brownian motions. We define the $M\times N$ matrix $M$ by
$$
M_{t}=M_0+\frac{1}{\sqrt{N}} B_t,
$$
Then the real Wishart process is $X$ is defined by $X_t=M_t^* M_t$. In the following, we will assume for simplicity
that $M\geq N$, to avoid trivial eigenvalues of $X$ (the case $M\leq N$ admits similar results, up to trivial adjustements).
The eigenvalues and eigenvectors dynamics were given in \cite{Bru1989}, i.e. the direct analogue of definitions (\ref{eqn:eigenvaluesSymmetric}), (\ref{eqn:eigenvectorsSymmetric})
and Theorem \ref{thm:PCE} hold for the following stochastic differential equations:
\begin{align}
\rd\la_k&=2\sqrt{\la_k}\frac{\rd B^{(s)}_{kk}}{\sqrt{N}}+\left(\frac{1}{N}\sum_{\ell\neq k}\frac{\la_k+\la_\ell}{\la_k-\la_\ell}+\frac{M}{N}\right)\rd t,\notag\\
\rd u_k&=\frac{1}{\sqrt{N}}\sum_{\ell\neq k}\frac{\sqrt{\lambda_k+\lambda_\ell}}{\lambda_k-\lambda_\ell}(\rd B^{(s)}_{k\ell})u_\ell
-\frac{1}{2N}\sum_{\ell\neq k}\frac{\la_k+\la_\ell}{(\la_k-\la_\ell)^2}(\rd t)u_k.\label{dynamWishart}
\end{align}
where $B^{(s)}$ is a (symmetric) $N\times N$ Dyson Brownian motion.

After conditioning on the eigenvalues trajectory, in the same way as Lemma \ref{lem:generator}, the generator for the above eigenvector dynamics can be shown to be
$$\LL_t=\sum_{1\leq k<\ell\leq N}d_{k\ell}(t)(X_{k\ell}^{(s)})^2$$
where we used the notations (\ref{eqn:Xkl}) and
$$
d_{k\ell}(t)=\frac{\lambda_k+\lambda_\ell}{N(\la_k(t)-\la_\ell(t))^2}
$$
The definition and utility of the eigenvector moment flow for covariance matrices are then summarized as follows.

\begin{theorem}[Eigenvector moment flow for covariance matrices] Let $\bq\in\mathbb{R}^N$, $z_k=\sqrt{N}\langle\bq, u_k (t) \rangle$.
Suppose that $\boldu$ is the solution to the stochastic differential equation (\ref{dynamWishart})
and $ f_{\bla, t} (\boeta)$ is defined analogously to \eqref{feq}, where   
$\boeta$ denote  the  configuration  $ \{(i_1,  j_1),  \dots, (i_m , j_m ) \}$. Then 
$f_{\bla, t}$
satisfies the equation  
\begin{align*}
&\partial_t f_{\bla, t} =  \mathscr{B}^{(s)}(t)  f_{\bla, t},\\
&\mathscr{B}^{(s)}(t)  f(\boeta) = \sum_{i \neq j} d_{ij}(t) 2 \eta_i (1+ 2 \eta_j) \left(f(\boeta^{i, j})-f(\boeta)\right).
\end{align*}
\end{theorem}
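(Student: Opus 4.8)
The plan is to mimic verbatim the derivation of the eigenvector moment flow carried out in Section~3 for symmetric generalized Wigner matrices, the only change being that the weights $c_{k\ell}(t)=1/(N(\lambda_k-\lambda_\ell)^2)$ are replaced throughout by $d_{k\ell}(t)=(\lambda_k+\lambda_\ell)/(N(\lambda_k-\lambda_\ell)^2)$. The point is that the algebra of the vector fields $X_{k\ell}^{(s)}$ of (\ref{eqn:Xkl}) acting on polynomials in $z_1,\dots,z_N$ is insensitive to the values of the weights, so once the generator of (\ref{dynamWishart}) is identified all the combinatorial identities of Section~3.1 transfer without modification.

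First I would record the well-posedness and conditioning statements: as noted after (\ref{dynamWishart}), the analogue of Theorem~\ref{thm:PCE} holds for the Wishart eigenvalue/eigenvector system, so that one may condition on a (typical) eigenvalue trajectory $\bla=(\bla_t)_{0\le t\le T_0}$ and study (\ref{dynamWishart}) as a pure eigenvector diffusion with deterministic, time-dependent, singular coefficients. Conditioning on $\bla$ and running the It\^o computation exactly as in the proof of Lemma~\ref{lem:generator} (Appendix~B.2), the generator of (\ref{dynamWishart}) acting on smooth functions of the eigenvector entries is
$$
\LL_t=\sum_{1\le k<\ell\le N}d_{k\ell}(t)\,(X_{k\ell}^{(s)})^2 .
$$
The only thing to verify here is that the quadratic covariations of the martingale part $\frac{1}{\sqrt N}\sum_{\ell\neq k}\frac{\sqrt{\lambda_k+\lambda_\ell}}{\lambda_k-\lambda_\ell}\,\rd B^{(s)}_{k\ell}\,u_\ell$ of $\rd u_k$ produce precisely the coefficient $d_{k\ell}$: this is immediate from the independence of the $B^{(s)}_{k\ell}$ and from $\big(\sqrt{\lambda_k+\lambda_\ell}/(\lambda_k-\lambda_\ell)\big)^2=(\lambda_k+\lambda_\ell)/(\lambda_k-\lambda_\ell)^2$, the drift $-\tfrac1{2N}\sum_{\ell\neq k}\frac{\lambda_k+\lambda_\ell}{(\lambda_k-\lambda_\ell)^2}u_k\,\rd t$ being exactly what is needed to turn the sum of squares of the martingale generators into the sum of squares of the antisymmetric fields $X_{k\ell}^{(s)}$, just as in the orthogonal case.

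With the generator in hand, set $z_k(t)=\sqrt N\langle\bq,u_k(t)\rangle$ and use (\ref{eqn:genSym}), $X_{k\ell}^{(s)}\PP^{(s)}=(z_k\partial_{z_\ell}-z_\ell\partial_{z_k})\PP^{(s)}$, to evaluate the action of $\LL_t$ on the test polynomials ${\PP^{(s)}}_{i_1,\dots,i_m}^{j_1,\dots,j_m}=\prod_{\ell}z_{i_\ell}^{2j_\ell}$. This computation is word-for-word the one of Section~3.1, and after the normalization (\ref{eqn:rescaleSym}) by $\prod_\ell a(2j_\ell)^{-1}$ it yields for $\QQ^{(s)}$ the same identities with $c_{k\ell}$ replaced by $d_{k\ell}$; in particular the coefficients on the right-hand sides again sum to zero, which is what allows the interpretation as a conservative multi-particle random walk. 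Finally, invoking the bijection between configurations $\boeta$ with $\mathcal{N}(\boeta)=n$ and index data $\{(i_1,j_1),\dots,(i_m,j_m)\}$ and defining $f_{\bla,t}(\boeta)$ as in \eqref{feq}, taking the conditional expectation of $\partial_t{\QQ^{(s)}_t}=\LL_t{\QQ^{(s)}_t}$ given $\bla$ gives $\partial_t f_{\bla,t}=\mathscr{B}^{(s)}(t)f_{\bla,t}$ with $\mathscr{B}^{(s)}(t)f(\boeta)=\sum_{i\neq j}d_{ij}(t)\,2\eta_i(1+2\eta_j)(f(\boeta^{i,j})-f(\boeta))$, which is the claim. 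The only genuinely new ingredient compared to Section~3 is the generator identification of the previous paragraph, i.e. checking that the square-root weights of (\ref{dynamWishart}) combine into $d_{k\ell}$; I expect this It\^o bookkeeping, rather than any conceptual point, to be the main (and essentially only) obstacle, and a complete proof would consist of little more than reproducing Appendix~B.2 with these modified coefficients.
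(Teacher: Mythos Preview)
Your proposal is correct and follows essentially the same approach as the paper: the paper itself merely notes (just before the theorem) that ``in the same way as Lemma~\ref{lem:generator}'' the generator of (\ref{dynamWishart}) is $\LL_t=\sum_{k<\ell}d_{k\ell}(t)(X_{k\ell}^{(s)})^2$, after which the theorem is stated as a summary, the implicit justification being that the Section~3 combinatorics with $c_{k\ell}$ replaced by $d_{k\ell}$ go through verbatim. Your write-up spells out exactly this two-step argument (generator identification via It\^o, then transport of the algebra of Section~3.1), which is precisely what the paper intends.
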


As in the case of symmetric matrices, the above eigenvector moment flow is reversible with respect to the measure $\pi^{(s)}$ defined in (\ref{eqn:weight}). Thus analogues of Theorems \ref{thm:main}, Corollary \ref{cor:Gauss}, \ref{cor:QUE} and Theorem \ref{thm:deterministic} for covariance matrices can be proved with arguments parallel to those used in  Sections 4, 5 and 6.

\begin{bibdiv}
\begin{biblist}

\bib{AllGui2013}{article}{
   author={Allez, R.},
   author={Guionnet, A.},
   title={A diffusive matrix model for invariant $\beta$-ensembles},
   journal={Electronic Journal of Probability},
   volume={62} 
   pages={1--30}
  date={2013}
}

\bib{Ana2008}{article}{
   author={Anantharaman, N.},
   title={Entropy and the localization of eigenfunctions},
   journal={Ann. of Math. (2)},
   volume={168},
   date={2008},
   number={2},
   pages={435--475}
}

\bib{AnaLeM2013}{article}{
   author={Anantharaman, N.},
   author={Le Masson, E.},
   title={Quantum ergodicity on large regular graphs},
   journal={preprint, arXiv:1304.4343},
   date={2013}
}

\bib{AndGuiZei2010}{book}{
      author={Anderson, G.~W.},
      author={Guionnet, A.},
      author={Zeitouni, O.},
       title={An introduction to random matrices},
      series={Cambridge Studies in Advanced Mathematics},
   publisher={Cambridge University Press},
     address={Cambridge},
        date={2010},
      volume={118},
        ISBN={978-0-521-19452-5},
}

\bib{BloErdKnoYauYin2013}{article}{
   author={Bloemendal, A.},
   author={Erd{\H o}s, L.},
   author={Knowles, A.},
   author={Yau, H.-T.},
   author={Yin, J.},
   title={Isotropic local laws for sample covariance and generalized Wigner matrices},
   journal={preprint, arxiv:1308.5729},
   date={2013}
}

\bib{BohGiaSch1984}{article}{
   author={Bohigas, O.},
   author={Giannoni, M.-J.},
   author={Schmit, C.},
   title={Characterization of chaotic quantum spectra and universality of
   level fluctuation laws},
   journal={Phys. Rev. Lett.},
   volume={52},
   date={1984},
   number={1},
   pages={1--4}
}

\bib{BouErdYau2013}{article}{
   author={Bourgade, P.},
   author={Erd{\H o}s, L.},
   author={Yau, H.-T.},
   title={Edge universality for generalized Wigner matrices},
   journal={Communications in Mathematical Physics}
   date={2014},
   volume={332},
   issue={1},
   pages={261--353}
}

\bib{BroLin2013}{article}{
   author={Brooks, S.},
   author={Lindenstrauss, E.},
   title={Non-localization of eigenfunctions on large regular graphs},
   journal={Israel J. Math.},
   volume={193},
   date={2013},
   number={1},
   pages={1--14}
}

\bib{Bru1989}{article}{
   author={Bru, M.-F.},
   title={Diffusions of perturbed principal component analysis},
   journal={J. Multivariate Anal.},
   volume={29},
   date={1989},
   number={1},
   pages={127--136}
}

\bib{CCV}{article}{
   author={Caffarelli, L.},
   author={Chan, C. H.},
   author={Vasseur, A.},
   title={Regularity theory for parabolic nonlinear integral operators},
   journal={J. Amer. Math. Soc.},
   volume={24},
   date={2011},
   number={3},
   pages={849--869},
   issn={0894-0347},
}

\bib{Col1985}{article}{
   author={Colin de Verdi{\`e}re, Y.},
   title={Ergodicit\'e et fonctions propres du laplacien},
   language={French, with English summary},
   journal={Comm. Math. Phys.},
   volume={102},
   date={1985},
   number={3},
   pages={497--502}
}

\bib{ErdPecRamSchYau2010}{article}{
   author={Erd{\H{o}}s, L.},
   author={P{\'e}ch{\'e}, S.},
   author={Ram{\'{\i}}rez, J. A.},
   author={Schlein, B.},
   author={Yau, H.-T.},
   title={Bulk universality for Wigner matrices},
   journal={Comm. Pure Appl. Math.},
   volume={63},
   date={2010},
   number={7},
   pages={895--925}
}

\bib{ErdSchYau2009}{article}{
      author={Erd{\H{o}}s, L.},
      author={Schlein, B.},
      author={Yau, H.-T.},
       title={Local semicircle law and complete delocalization for Wigner random matrices},
        date={2009},
     journal={Commun. Math. Phys.},
      volume={287}
       pages={641\ndash655},
}

\bib{ErdSchYau2011}{article}{
      author={Erd{\H{o}}s, L.},
      author={Schlein, B.},
      author={Yau, H.-T.},
       title={Universality of random matrices and local relaxation flow},
        date={2011},
     journal={Invent. Math.},
      volume={185},
      number={1},
       pages={75\ndash 119},
}

\bib{ErdYau2012singlegap}{article}{
      author={Erd{\H{o}}s, L.},
      author={Yau, H.-T.},
       title={Gap universality of generalized Wigner and beta ensembles},
        date={2012},
     journal={preprint, arxiv:1211.3786},
}

\bib{ErdYau2012}{article}{
      author={Erd{\H{o}}s, L.},
      author={Yau, H.-T.},
       title={Universality of local spectral statistics of random matrices},
        date={2012},
     journal={Bull. Amer. Math. Soc. (N.S.)},
      volume={49},
      number={3},
       pages={377\ndash 414},
}

\bib{EYYBernoulli}{article}{
      author={Erd{\H{o}}s, L.},
      author={Yau, H.-T.},
      author={Yin, J.},
       title={Universality for generalized {W}igner matrices with {B}ernoulli
  distribution},
        date={2011},
        ISSN={2156-3527},
     journal={J. Comb.},
      volume={2},
      number={1},
       pages={15\ndash 81},
}

\bib{ErdYauYin2012Univ}{article}{
      author={Erd{\H{o}}s, L.},
      author={Yau, H.-T.},
      author={Yin, J.},
       title={Bulk universality for generalized {W}igner matrices},
        date={2012},
     journal={Probab. Theory Related Fields},
      volume={154},
      number={1-2},
       pages={341\ndash 407},
}

\bib{ErdYauYin2012Rig}{article}{
      author={Erd{\H{o}}s, L.},
      author={Yau, H.-T.},
      author={Yin, J.},
       title={Rigidity of eigenvalues of generalized {W}igner matrices},
        date={2012},
     journal={Adv. Math.},
      volume={229},
      number={3},
       pages={1435\ndash 1515},
}

\bib{Hol2010}{article}{
   author={Holowinsky, R.},
   title={Sieving for mass equidistribution},
   journal={Ann. of Math. (2)},
   volume={172},
   date={2010},
   number={2},
   pages={1499--1516}
}

\bib{HolSou2010}{article}{
   author={Holowinsky, R.},
   author={Soundararajan, K.},
   title={Mass equidistribution for Hecke eigenforms},
   journal={Ann. of Math. (2)},
   volume={172},
   date={2010},
   number={2},
   pages={1517--1528}
}

\bib{KnoYin2011}{article}{
      author={Knowles, A.},
      author={Yin, J.},
       title={Eigenvector distribution of Wigner matrices},
        date={2011},
     journal={to appear in Prob. Theor. Rel. Fields}
}

\bib{KnoYin2012}{article}{
      author={Knowles, A.},
      author={Yin, J.},
       title={The isotropic semicircle law and deformation of Wigner matrices},
        date={2012},
     journal={to appear in Comm. Pure Appl. Math.}
}

\bib{KotSmi1997}{article}{
   author={Kottos, T.},
   author={Smilansky, U.},
   title={Quantum chaos on graphs},
   journal={Phys. Rev. Lett.},
   volume={79},
   date={1997}
}

\bib{LeM2013}{article}{
   author={Le Masson, E.},
   title={Pseudo-differential calculus on homogeneous trees},
   journal={preprint, arXiv:1302.5387},
   date={2013}
}

\bib{Lin2006}{article}{
   author={Lindenstrauss, E.},
   title={Invariant measures and arithmetic quantum unique ergodicity},
   journal={Ann. of Math. (2)},
   volume={163},
   date={2006},
   number={1},
   pages={165--219}
}

\bib{McK1969}{article}{
   author={McKean, H. P.},
   title={Stochastic integrals},
   journal={Academic Press, New York},
   date={1969}
}

\bib{NorRogWil1986}{article}{
   author={Norris, J. R.},
   author={Rogers, L. C. G.},
   author={Williams, David},
   title={Brownian motions of ellipsoids},
   journal={Trans. Amer. Math. Soc.},
   volume={294},
   date={1986},
   number={2},
   pages={757--765}
}

\bib{RogShi1993}{article}{
      author={Rogers, L. C. G. },
      author={Shi, Z.},
       title={Interacting Brownian particles and the Wigner law},
        date={1993},
     journal={Probab. Theory Related Fields},
      volume={95},
      number={4},
       pages={555\ndash 570},
}

\bib{RudSar1994}{article}{
   author={Rudnick, Z.},
   author={Sarnak, P.},
   title={The behaviour of eigenstates of arithmetic hyperbolic manifolds},
   journal={Comm. Math. Phys.},
   volume={161},
   date={1994},
   number={1},
   pages={195--213}
}

\bib{Shn1974}{article}{
      author={Shnirel'man, A. I.},
        date={1974},
     journal={Uspekhi Mat. Nauk},
      volume={29},
      number={6},
       pages={181\ndash 182},
}

\bib{Sos1999}{article}{
   author={Soshnikov, A.},
   title={Universality at the edge of the spectrum in Wigner random
   matrices},
   journal={Comm. Math. Phys.},
   volume={207},
   date={1999},
   number={3},
   pages={697--733}
}

\bib{TaoVu2010}{article}{
   author={Tao, T.},
   author={Vu, V.},
   title={Random matrices: universality of local eigenvalue statistics up to
   the edge},
   journal={Comm. Math. Phys.},
   volume={298},
   date={2010},
   number={2},
   pages={549--572}}

\bib{TaoVu2011}{article}{
   author={Tao, T.},
   author={Vu, V.},
   title={Random matrices: universality of local eigenvalue statistics},
   journal={Acta Math.},
   volume={206},
   date={2011},
   number={1}
}

\bib{TaoVu2012}{article}{
   author={Tao, T.},
   author={Vu, V.},
   title={Random matrices: universal properties of eigenvectors},
   journal={Random Matrices Theory Appl.},
   volume={1},
   date={2012},
   number={1}
}

\bib{Zel1987}{article}{
   author={Zelditch, S.},
   title={Uniform distribution of eigenfunctions on compact hyperbolic
   surfaces},
   journal={Duke Math. J.},
   volume={55},
   date={1987},
   number={4},
   pages={919--941}
}

\end{biblist}
\end{bibdiv}

\end{document}